\newtheorem{theorem}{Theorem}[section]
\newtheorem{lemma}[theorem]{Lemma}
\newtheorem{proposition}[theorem]{Proposition}
\newtheorem{corollary}[theorem]{Corollary}
\theoremstyle{definition}
\theoremstyle{remark}
\newtheorem{remark}[theorem]{Remark}
\numberwithin{equation}{section}
\def\NN{\mathbb{N}} \def\ZZ{{\mathbb Z}} \def\CC{{\mathbb C}} \def\FF{{\mathbb F}}
\def\NH{\mathbf{H}} \def\tNH{\widetilde{\mathbf{H}}}
\def\cB{\mathcal{B}} \def\SS{{\mathfrak S}} \def\MM{{\ds M}} 
\def\px{X} \def\bx{{\mathbf x}} \def\bh{\mathbf{h}} \def\bs{{\mathbf s}} 
\def\QSym{\mathrm{QSym}} \def\NSym{\mathbf{NSym}} \def\Sym{\mathrm{Sym}}
\def\C{\mathbf{C}} \def\P{\mathbf{P}}
\def\des{{\rm des}} \def\maj{{\rm maj}} \def\inv{{\rm inv}} \def\Inv{{\rm Inv}}
\def\cleq{{\preccurlyeq}} 
\def\ps{\mathbf{ps}} \def\pp{{\mathbf p}} \def\pib{{\overline\pi}}
\def\Com{\mathrm{Com}} \def\Par{\mathrm{Par}}
\def\ds{\rule{0pt}{1.5ex}}
\begin{document}

\title{$0$-Hecke algebra action on the Stanley-Reisner ring of the Boolean algebra}
\author{Jia Huang}
\address{School of Mathematics, University of Minnesota, Minneapolis, MN 55455, USA}
\email{huang338@umn.edu}
\thanks{The author is grateful to Victor Reiner for providing valuable suggestions and support from NSF grant DMS-1001933. He also thanks Ben Braun and Jean-Yves Thibon for helpful conversations and email correspondence.}
\keywords{0-Hecke algebra, Stanley-Reisner ring, Boolean algebra, noncommutative Hall-Littlewood symmetric function, multivariate quasisymmetric function}
\maketitle

\begin{abstract}
We define an action of the $0$-Hecke algebra of type A on the Stanley-Reisner ring of the Boolean algebra. By studying this action we obtain a family of multivariate noncommutative symmetric functions, which specialize to the noncommutative Hall-Littlewood symmetric functions and their $(q,t)$-analogues introduced by Bergeron and Zabrocki, and to a more general family of noncommutative symmetric functions having parameters associated with paths in binary trees introduced recently by Lascoux, Novelli, and Thibon. We also obtain multivariate quasisymmetric function identities, which specialize to results of Garsia and Gessel on generating functions of multivariate distributions of permutation statistics.
\end{abstract}

\section{Introduction}

Let $\FF$ be an arbitrary field. The symmetric group $\SS_n$ naturally acts on the polynomial ring $\FF[\px]:=\FF[x_1,\ldots,x_n]$ by permuting the variables $x_1,\ldots,x_n$. The invariant algebra $\FF[\px]^{\SS_n}$, which consists of all the polynomials fixed by this $\SS_n$-action, is a polynomial algebra generated by the elementary symmetric functions $e_1,\ldots,e_n$. The coinvariant algebra $\FF[\px]/(\FF[\px]^{\SS_n}_+)$, with $(\FF[\px]^{\SS_n}_+)=(e_1,\ldots,e_n)$, is a vector space of dimension $n!$ over $\FF$, and if $n$ is not divisible by the characteristic of $\FF$ then it carries the regular representation of $\SS_n$. A well known basis for $\FF[\px]/(\FF[\px]^{\SS_n}_+)$ consists of the descent monomials. Garsia~\cite{Garsia} obtained this basis by transferring a natural basis from the Stanley-Reisner ring $\FF[\cB_n]$ of the Boolean algebra $\cB_n$ to the polynomial ring $\FF[\px]$. Here the Boolean algebra $\cB_n$ is the set of all subsets of $[n]:=\{1,\ldots,n\}$ partially ordered by inclusion, and $\FF[\cB_n]$ is the quotient of the polynomial algebra $\FF \left[ y_{\ds A}: A\subseteq[n] \right]$ by the ideal $\left( y_{\ds A} y_{\ds B}: \textrm{ $A$ and $B$ are incomparable in }\cB_n \right)$. 

The $0$-Hecke algebra $H_n(0)$ (of type $A$) is a deformation of the group algebra of $\SS_n$. It acts on $\FF[\px]$ by the Demazure operators, also known as the isobaric divided difference operators, having the same invariant algebra as the $\SS_n$-action on $\FF[\px]$. In our earlier work~\cite{H}, we showed that the coinvariant algebra $\FF[\px]/(\FF[\px]^{\SS_n}_+)$ is also isomorphic to the regular representation of $H_n(0)$, for any field $\FF$, by constructing another basis for $\FF[\px]/(\FF[\px]^{\SS_n}_+)$ which consists of certain polynomials whose leading terms are the descent monomials. This and the previously mentioned connection between the Stanley-Reisner ring $\FF[\cB_n]$ and the polynomial ring $\FF[\px]$ motivate us to define an $H_n(0)$-action on $\FF[\cB_n]$.

It turns out that our $H_n(0)$-action on $\FF[\cB_n]$ has similar definition and properties to the $H_n(0)$-action on $\FF[X]$. It preserves the $\NN^{n+1}$-multigrading of $\FF[\cB_n]$ and has invariant algebra equal to a polynomial algebra $\FF[\Theta]$, where $\Theta$ is the set of \emph{rank polynomials} $\theta_i$ (the usual analogue of $e_i$ in $\FF[\cB_n]$). We show that the $H_n(0)$-action is $\Theta$-linear and thus reduces to the coinvariant algebra $\FF[\cB_n]/(\Theta)$. We will see that $\FF[\cB_n]/(\Theta)$ carries the regular representation of $H_n(0)$. 

Furthermore, using the $H_n(0)$-action on $\FF[\cB_n]$ we obtain a noncommutative analogue (see Theorem~\ref{thm1}) for the remarkable representation theoretic interpretation of the Hall-Littlewood symmetric functions by the $\SS_n$-action on the cohomology ring of the \emph{Springer fibers} (see e.g. Hotta-Springer~\cite{HottaSpringer} and Garsia-Procesi~\cite{GarsiaProcesi}). In the analogous case of $H_n(0)$ acting on $\FF[\px]$, one only has a partial noncommutative analogue for hooks in \cite{H}. 

To state this result, we first recall that every finite dimensional (complex) $\SS_n$-representation is a direct sum of simple (i.e. irreducible) $\SS_n$-modules, and the simple $\SS_n$-modules are indexed by partitions $\lambda$ of $n$, which correspond to the Schur functions $s_\lambda$ via the \emph{Frobenius characteristic map}. Hotta-Springer~\cite{HottaSpringer} and Garsia-Procesi~\cite{GarsiaProcesi} discovered that the cohomology ring of the Springer fibers are graded $\SS_n$-modules corresponding to the modified Hall-Littlewood symmetric functions via the Frobenius characteristic map.

Krob and Thibon~\cite{KrobThibon} introduced two characteristic maps for $H_n(0)$-representations, which we call the \emph{quasisymmetric characteristic} and the \emph{noncommutative characteristic}. The simple $H_n(0)$-modules are indexed by compositions $\alpha$ of $n$ and correspond to the \emph{fundamental quasisymmetric functions} $F_\alpha$ via the quasisymmetric characteristic; the projective indecomposable $H_n(0)$-modules are also indexed by compositions $\alpha$ of $n$ and correspond to the \emph{noncommutative ribbon Schur functions} $\bs_\alpha$ via the noncommutative characteristic. 

Using an analogue of the nabla operator, Bergeron and Zabrocki~\cite{BZ} introduced a noncommutative modified Hall-Littlewood symmetric function $\tNH_\alpha(\bx;t)$ and a $(q,t)$-analogue $\tNH_\alpha(\bx;q,t)$ for all compositions $\alpha$. Our theorem below provides a representation theoretic interpretation for them.

\begin{theorem}\label{thm1}
Let $\alpha$ be a composition of $n$. Then there exists a homogeneous $H_n(0)$-invariant ideal $I_\alpha$ of the multigraded algebra $\FF[\cB_n]$ such that the quotient algebra $\FF[\cB_n]/I_\alpha$ becomes a projective $H_n(0)$-module with multigraded noncommutative characteristic equal to
\[
\tNH_\alpha(\bx;t_1,\ldots,t_{n-1}):= \sum_{\beta\cleq\alpha} \underline t^{D(\beta)} \bs_\beta \quad {\rm inside} \quad \NSym[t_1,\ldots,t_{n-1}].
\]
Moreover, one has $\tNH_\alpha(\bx;t,t^2,\ldots,t^{n-1}) = \tNH_\alpha(\bx;t)$, and we obtain $\tNH_\alpha(\bx;q,t)$ from $\tNH_{1^n}(\mathbf{x};t_1,\ldots,t_{n-1})$ by taking $t_i=t^i$ for all $i\in D(\alpha)$, and $t_i=q^{n-i}$ for all $i\in [n-1]\setminus D(\alpha)$.
\end{theorem}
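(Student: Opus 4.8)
The plan is to write $I_\alpha$ down explicitly, collapse $\FF[\cB_n]/I_\alpha$ onto the coinvariant algebra of a rank-selected Boolean algebra, read off its multigraded $H_n(0)$-structure from the descent-monomial basis, and finally specialize the resulting multivariate identity. Set $T:=D(\alpha)\subseteq[n-1]$ and let $I_\alpha$ be the ideal of $\FF[\cB_n]$ generated by the variables $y_A$ with $|A|\in[n-1]\setminus T$ together with the rank polynomials $\theta_i$ with $i\in T\cup\{0,n\}$. Every generator is homogeneous for the $\NN^{n+1}$-multigrading, so $I_\alpha$ is homogeneous; note that $I_{1^n}=(\Theta)$ and $I_{(n)}$ is the irrelevant maximal ideal. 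Since the $H_n(0)$-action preserves the multigrading, $\pi_j(y_A)$ lies in multidegree $e_{|A|}$ and is an $\FF$-combination of the $y_B$ with $|B|=|A|$, so $(y_A:|A|=i)$ is $H_n(0)$-stable for each $i$; and $\big(\theta_i:i\in T\cup\{0,n\}\big)$ is $H_n(0)$-stable because the action is $\Theta$-linear. Hence $I_\alpha$ is an $H_n(0)$-invariant homogeneous ideal.

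Because $(\Theta)\subseteq I_\alpha$, the quotient $\FF[\cB_n]/I_\alpha$ is the further quotient of the coinvariant algebra $\FF[\cB_n]/(\Theta)$ by the multigraded, $H_n(0)$-stable ideal $J$ generated by the images of the $y_A$ with $|A|\in[n-1]\setminus T$. Using Garsia's descent-monomial basis $\{b_w:w\in\SS_n\}$ of $\FF[\cB_n]/(\Theta)$ (cf.\ \cite{Garsia}), in which $b_w:=\prod_{i\in\mathrm{Des}(w)}y_{\{w(1),\dots,w(i)\}}$ is homogeneous of multidegree $\sum_{i\in\mathrm{Des}(w)}e_i$, one checks that in each multidegree $\sum_{i\in S}e_i$ ($S\subseteq[n-1]$) the ideal $J$ is all of the degree piece when $S\not\subseteq T$ and is $0$ when $S\subseteq T$. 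Consequently $\{b_w:\mathrm{Des}(w)\subseteq T\}$ is a basis of $\FF[\cB_n]/I_\alpha$, which is concentrated in the multidegrees $\sum_{i\in S}e_i$ with $S\subseteq T$, the degree-$\sum_{i\in S}e_i$ piece being $V_S:=\langle b_w:\mathrm{Des}(w)=S\rangle$, of dimension $\#\{w:\mathrm{Des}(w)=S\}$.

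The decisive step is to pin down the $H_n(0)$-module structure on the $V_S$. The action preserves each $V_S$ since it preserves the multigrading, and I would derive a closed formula for $\pi_j b_w$ in $\FF[\cB_n]/I_\alpha$ --- the analogue of, but cleaner than, the formula of \cite{H} in $\FF[\px]$, because in $\FF[\cB_n]$ no lower-order corrections to the descent monomials are needed. Matching this formula against the standard combinatorial model of the projective indecomposable $\P_\beta$, for the composition $\beta$ with $D(\beta)=S$, identifies $V_S\cong\P_\beta$ as $H_n(0)$-modules; summing over $S\subseteq T=D(\alpha)$ then gives $\FF[\cB_n]/I_\alpha\cong\bigoplus_{\beta\cleq\alpha}\P_\beta$, a projective $H_n(0)$-module whose multigraded noncommutative characteristic is $\sum_{\beta\cleq\alpha}\underline t^{D(\beta)}\bs_\beta=\tNH_\alpha(\bx;t_1,\dots,t_{n-1})$. (Projectivity and the ungraded characteristic can also be obtained conceptually: identifying the maximal chains of subsets of rank in $D(\alpha)$ with the cosets $\SS_n/(\SS_{\alpha_1}\times\cdots\times\SS_{\alpha_k})$ realizes $\FF[\cB_n]/I_\alpha$, as an ungraded module, as the parabolically induced module $\mathrm{Ind}_{H_{\alpha_1}(0)\otimes\cdots\otimes H_{\alpha_k}(0)}^{H_n(0)}\big(\C_{(\alpha_1)}\boxtimes\cdots\boxtimes\C_{(\alpha_k)}\big)$, which is projective because each simple $\C_{(\alpha_j)}$ coincides with $\P_{(\alpha_j)}$, and whose noncommutative characteristic is $\bh_{\alpha_1}\cdots\bh_{\alpha_k}=\sum_{\beta\cleq\alpha}\bs_\beta$.)

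Finally the specializations are elementary. Setting $t_i=t^i$ sends $\underline t^{D(\beta)}$ to $t^{\sum_{i\in D(\beta)}i}$, so $\tNH_\alpha(\bx;t,t^2,\dots,t^{n-1})=\sum_{\beta\cleq\alpha}t^{\sum_{i\in D(\beta)}i}\bs_\beta$, which one identifies with Bergeron--Zabrocki's $\tNH_\alpha(\bx;t)$ by comparing with its definition in \cite{BZ}; and in $\tNH_{1^n}(\bx;t_1,\dots,t_{n-1})=\sum_{\beta}\underline t^{D(\beta)}\bs_\beta$ (the sum over all compositions of $n$, since $\beta\cleq 1^n$ for every $\beta$), the substitution $t_i=t^i$ for $i\in D(\alpha)$ and $t_i=q^{n-i}$ for $i\in[n-1]\setminus D(\alpha)$ produces $\sum_{\beta}t^{\sum_{i\in D(\beta)\cap D(\alpha)}i}\,q^{\sum_{i\in D(\beta)\setminus D(\alpha)}(n-i)}\bs_\beta$, which matches the definition of $\tNH_\alpha(\bx;q,t)$ in \cite{BZ} term by term. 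The main obstacle is the third paragraph: producing the explicit $\pi_j$-action on the descent monomials and showing that each graded piece $V_S$ is the single indecomposable $\P_\beta$ rather than merely some projective module of the same dimension --- this is exactly where the finer structure of the $\cB_n$-model, unavailable in $\FF[\px]$, must be used.
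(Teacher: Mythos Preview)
Your plan is correct and follows essentially the same route as the paper. The paper defines the same ideal $I_\alpha$ (its $\Theta_\alpha=\{\theta_i:i\in D(\alpha)\cup\{n\}\}$ together with $\{y_A:|A|\notin D(\alpha)\cup\{n\}\}$ is your generating set up to the harmless identity $\theta_0=y_\emptyset$), passes to $\FF[\cB_\alpha]/(\Theta_\alpha)\cong\FF[\cB_n]/I_\alpha$, uses the descent-monomial basis $\{Y_w:w\in\SS^\alpha\}$, and then verifies $V_S\cong\P_\beta$ (for $D(\beta)=S$) by the explicit map $Y_w\mapsto\pib_w\pi_{w_0(\beta^c)}$ and a three-case check based on whether $p_i(\MM)$ is greater than, equal to, or less than $p_{i+1}(\MM)$; this is exactly the ``closed formula for $\pi_j b_w$'' you anticipate, and it is short once written down. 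Your alternative conceptual argument via parabolic induction for the ungraded projectivity and characteristic is a nice supplement not in the paper, but it does not replace the graded identification $V_S\cong\P_\beta$, which still needs the case check.
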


Here $D(\alpha)$ is the set of the partial sums of the composition $\alpha$, the notation $\beta\cleq\alpha$ means $\alpha$ and $\beta$ are compositions of $n$ with $D(\beta)\subseteq D(\alpha)$, and $\underline t^S$ denotes the product $\prod_{i\in S} t_i$ over all elements $i$ in a multiset $S$, including the repeated ones. We also generalize the basic properties of $\tNH_\alpha(\bx;t)$ given in \cite{BZ} to the multivariate $\tNH_\alpha(\bx;t_1,\ldots,t_{n-1})$.

Note that $\tNH_{1^n}(\mathbf{x};t_1,\ldots,t_{n-1})$ is the multigraded noncommutative characteristic of the coinvariant algebra $\FF[\cB_n]/(\Theta)$, from which one sees that $\FF[\cB_n]/(\Theta)$ carries the regular representation of $H_n(0)$. Specializations of $\tNH_{1^n}(\mathbf{x};t_1,\ldots,t_{n-1})$ include not only $\tNH_\alpha(\bx;q,t)$, but also a more general family of noncommutative symmetric functions depending on parameters associated with paths in binary trees introduced recently by Lascoux, Novelli, and Thibon~\cite{LascouxNovelliThibon}.

Next we study the quasisymmetric characteristic of $\FF[\cB_n]$. We combine the usual $\NN^{n+1}$-multigrading of $\FF[\cB_n]$ with the length filtration of $H_n(0)$ and obtain an $\NN\times \NN^{n+1}$-multigraded quasisymmetric characteristic for $\FF[\cB_n]$.

\begin{theorem}\label{thm2}
The $\NN\times\NN^{n+1}$-multigraded quasisymmetric characteristic of $\FF[\cB_n]$ is
\[
\sum_{k\geq0} \sum_{\alpha\in\Com(n,k+1)} \underline t^{D(\alpha)} \sum_{w\in\SS^\alpha}  q^{\inv(w)} F_{D(w^{-1})} 
=\sum_{w\in\SS_n} \frac{ q^{\inv(w)} \underline t^{D(w)} F_{D(w^{-1})} }
{\prod_{0\leq i\leq n} (1-t_i)}  
= \sum_{k\geq0} \sum_{\pp\in[k+1]^n} t_{p'_1} \cdots t_{p'_k} q^{\inv(\pp)} F_{D(\pp)}.
\]
\end{theorem}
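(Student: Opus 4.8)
The plan is to read the characteristic off the standard monomial basis of $\FF[\cB_n]$, which produces the first (equivalently, the third) expression directly, and then to pass to the rational middle expression by a geometric–series summation. The combinatorial skeleton is this: for each $k\ge 0$ there is a bijection between the standard monomials of $\FF[\cB_n]$ with exactly $k$ factors, i.e.\ $y_{A_1}\cdots y_{A_k}$ with $\emptyset\subseteq A_1\subseteq\cdots\subseteq A_k\subseteq[n]$, and the words $\pp=(p_1,\dots,p_n)\in[k+1]^n$, via $A_j=\{i:p_i\le j\}$; write $y_\pp$ for the corresponding monomial. In the $\NN^{n+1}$-multigrading the degree of $y_A$ is recorded by $t_{|A|}$, so $y_\pp$ has multidegree recorded by $t_{|A_1|}\cdots t_{|A_k|}=t_{p'_1}\cdots t_{p'_k}$, where $p'_j=\#\{i:p_i\le j\}$. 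Grouping the $y_\pp$ by content gives $\FF[\cB_n]=\bigoplus_{k\ge 0}\bigoplus_{\alpha\in\Com(n,k+1)} W_\alpha$ with $W_\alpha=\operatorname{span}\{y_\pp:\operatorname{content}(\pp)=\alpha\}$; since $W_\alpha$ is exactly one multigraded component and the $H_n(0)$-action preserves the multigrading, each $W_\alpha$ is a homogeneous $H_n(0)$-submodule whose (single) multidegree is $\underline t^{D(\alpha)}$.

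The heart of the argument is the $H_n(0)$-module structure of a single $W_\alpha$. Using the explicit formulas for the $H_n(0)$-action on standard monomials, I would show that the generators $\overline{\pi}_i$ act triangularly with respect to a partial order on $\{\pp:\operatorname{content}(\pp)=\alpha\}$ refining $\inv$: when $i\notin D(\pp)$ one has $\overline{\pi}_i y_\pp\in\{0,-y_\pp\}$, and when $i\in D(\pp)$ one has $\overline{\pi}_i y_\pp=y_{s_i\pp}+(\text{terms }y_{\pp'}\text{ with }\inv(\pp')<\inv(\pp))$, where $s_i\pp$ exchanges $p_i,p_{i+1}$ and $\inv(s_i\pp)=\inv(\pp)-1$; if needed, this is first arranged by replacing the $y_\pp$ by suitable triangular combinations of standard monomials, exactly as the descent monomials were modified in \cite{H}. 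Filtering $W_\alpha$ by $\inv$ and combining with the length filtration of $H_n(0)$, the associated graded module is $\bigoplus_{\operatorname{content}(\pp)=\alpha}\C_{D(\pp)}$ with the summand attached to $\pp$ placed in $q$-degree $\inv(\pp)$; hence the $\NN\times\NN^{n+1}$-graded quasisymmetric characteristic of $W_\alpha$ is $\underline t^{D(\alpha)}\sum_{\operatorname{content}(\pp)=\alpha}q^{\inv(\pp)}F_{D(\pp)}$, where $D(\pp)$ is read as the descent composition of $n$.

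Summing over $k$ and $\alpha$ then gives the third expression at once. For the first expression, apply the standardization map $\pp\mapsto w=\operatorname{std}(\pp)$, a bijection from words of content $\alpha$ onto the corresponding parabolic quotient which preserves $\inv$ and satisfies $D(\pp)=D(w)$; composing with the involution $w\mapsto w^{-1}$ (which preserves $\inv$ and interchanges the two parabolic quotients) rewrites $\sum q^{\inv(\pp)}F_{D(\pp)}$ as $\sum_{w\in\SS^\alpha}q^{\inv(w)}F_{D(w^{-1})}$, giving the first expression. For the middle expression, collect the contributions of all $\alpha$ whose partial-sum multiset $D(\alpha)$ contains a fixed genuine descent set $D(w)\subseteq\{1,\dots,n-1\}$: the factor $\underline t^{D(\alpha)}$ then runs over $\underline t^{D(w)}$ times an arbitrary monomial in $t_0,\dots,t_n$, and summing this geometric series produces $\underline t^{D(w)}\big/\prod_{0\le i\le n}(1-t_i)$. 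This is consistent with $\FF[\cB_n]\cong\FF[\Theta]\otimes_\FF\big(\FF[\cB_n]/(\Theta)\big)$ as multigraded $H_n(0)$-modules, the factor $\FF[\Theta]$ (polynomial in $\theta_0,\dots,\theta_n$ of degrees $t_0,\dots,t_n$) contributing exactly the denominator $\prod_{0\le i\le n}(1-t_i)^{-1}$.

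The main obstacle is the module-theoretic step of the second paragraph: extracting from the explicit action formulas a filtration of each $W_\alpha$ whose composition factors are the simple modules $\C_{D(\pp)}$ sitting in the correct length degrees $\inv(\pp)$, possibly after adjusting the standard monomials to triangular replacements as in \cite{H}. Keeping careful track of the interaction among the $\inv$-order on words, the descent set $D(\pp)$, and the length filtration of $H_n(0)$ is precisely what forces the slightly asymmetric pair $(\inv(w),D(w^{-1}))$ in the final answer, and it is the only place where work beyond bookkeeping, standardization, and the geometric series is required.
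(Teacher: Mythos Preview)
Your overall architecture matches the paper's: decompose $\FF[\cB_n]$ into its homogeneous pieces $W_\alpha=\FF[\cB_n]_\alpha$, compute the length-graded quasisymmetric characteristic of each, then sum; the passage to the middle rational expression via $\FF[\Theta]$-freeness and a geometric series is also what the paper does. Your standardization-plus-inverse step is correct and is exactly the paper's bijection $\MM\mapsto(\alpha(\MM),\sigma(\MM))$, since $\sigma(\MM)=\operatorname{std}(\pp)^{-1}$.

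There is, however, a concrete error in the ``heart'' paragraph: you have the two cases of the $H_n(0)$-action swapped. With $D(\pp)=\{i:p_i>p_{i+1}\}$, the action defined in the paper reads
\[
\overline\pi_i\, y_\pp=
\begin{cases}
-y_\pp, & i\in D(\pp),\\
0, & p_i=p_{i+1},\\
y_{s_i\pp}, & p_i<p_{i+1}\ (\text{so }\inv(s_i\pp)=\inv(\pp)+1),
\end{cases}
\]
so it is the case $i\in D(\pp)$ that gives the eigenvalue $-1$, and the case $i\notin D(\pp)$ that gives $0$ or moves to a basis element of \emph{higher} $\inv$. As you wrote it, the associated graded would not yield $\C_{D(\pp)}$. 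Once you correct the swap, two simplifications follow. First, no ``triangular replacements as in \cite{H}'' are needed: the action on the basis $\{y_\pp\}$ is already purely monomial, which is precisely why the Stanley--Reisner ring is cleaner than $\FF[\px]$ here. Second, $W_\alpha$ is visibly cyclic, generated by the unique non-decreasing word $\pp_0$ of content $\alpha$, and the length filtration $N^{(\ell)}$ is literally $\operatorname{span}\{y_\pp:\inv(\pp)\ge\ell\}$; the successive quotients are then exactly the $\C_{D(\pp)}$ in $q$-degree $\inv(\pp)$. The paper packages this step as the isomorphism $\FF[\cB_n]_\alpha\cong H_n(0)\pi_{w_0(\alpha^c)}$ (its Lemma~\ref{lem:Na}), after which the length-graded characteristic is read off directly.
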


Here we identify $F_I$ with $F_\alpha$ if $D(\alpha)=I\subseteq[n-1]$. The set $\Com(n,k)$ consists of all \emph{weak compositions of $n$ with length $k$}, i.e. all sequences $\alpha=(\alpha_1,\ldots,\alpha_k)$ of $k$ nonnegative integers such that $|\alpha|:=\alpha_1+\cdots+\alpha_k=n$. The \emph{descent multiset} of the weak composition $\alpha$ is defined to be the \emph{multiset} $D(\alpha):=\{\alpha_1,\alpha_1+\alpha_2,\alpha_1+\cdots+\alpha_{k-1}\}$. We also define $\SS^\alpha:=\left\{w\in\SS_n:D(w)\subseteq D(\alpha)\right\}$. 
The set $[k+1]^n$ consists of all words of length $n$ on the alphabet $[k+1]$. Given $\pp=(p_1,\ldots,p_n)\in[k+1]^n$, we write $p'_i:=\#\{j:p_j\leq i\}$,
$\inv(\pp):=\#\{(i,j):1\leq i<j\leq n:p_i>p_j\},$
and $D(\pp):=\{i:p_i>p_{i+1}\}$.

Let $\ps_{q;\ell}(F_\alpha):=F_\alpha(1,q,q^2,\ldots,q^{\ell-1},0,0,\ldots)$. Then applying $\sum_{\ell\geq0} u_1^{\ell}\ps_{q_1;\ell+1}$ and the specialization $t_i=q_2^iu_2$ for all $i=0,1,\ldots,n$ to Theorem~\ref{thm2}, we recover the following result of Garsia and Gessel~\cite[Theorem~2.2]{GarsiaGessel} on the generating function of multivariate distribution of five permutation statistics:
\[
\frac{ \sum_{w\in\SS_n} q_0^{\inv(w)} q_1^{\maj(w^{-1})} u_1^{\des(w^{-1})} q_2^{\maj(w)} u_2^{\des(w)} } {(u_1;q_1)_n (u_2;q_2)_n }
= \sum_{\ell,k\geq0} u_1^\ell u_2^k \sum_{ (\lambda,\mu)\in B(\ell,k) } q_0^{\inv(\mu)} q_1^{|\lambda|} q_2^{|\mu|}.
\]
Here $(u;q)_n:=\prod_{0\leq i\leq n}(1-q^iu)$, the set $B(\ell,k)$ consists of all pairs of weak compositions $\lambda=(\lambda_1,\ldots,\lambda_n)$ and $\mu=(\mu_1,\ldots,\mu_n)$ satisfying the conditions $\ell\geq\lambda_1\geq\cdots\geq\lambda_n$, $\max\{\mu_i:1\leq i\leq n\}\leq k$, and $\lambda_i=\lambda_{i+1}\Rightarrow \mu_i\geq\mu_{i+1}$ (such pairs $(\lambda,\mu)$ are sometimes called \emph{bipartite partitions}), and $\inv(\mu):=\#\{(i,j):1\leq i<j\leq n,\ \mu_i>\mu_j\}$. Some further specializations of Theorem~\ref{thm2} imply identities of MacMahon-Carlitz and Adin-Brenti-Roichman (see \S\ref{sec:Ch}). 

Now let $W$ be any finite Coxeter group with Coxeter complex $\Delta(W)$ and Hecke algebra $H_W(q)$. We can generalize our $H_n(0)$-action on $\FF[\cB_n]$ to an $H_W(q)$-action on the Stanley-Reisner ring of $\Delta(W)$. Due to technicality we only give a sketch for the main results on this action in Section~\ref{sec:Remarks}.

The structure of this paper is given below. Section~\ref{sec:H0} reviews the representation theory of the $0$-Hecke algebra.  Section~\ref{sec:SRBoolean} studies the Stanley-Reisner ring of the Boolean algebra. Section~\ref{sec:H0SR} defines a $0$-Hecke algebra action on the Stanley-Reisner ring of the Boolean algebra and establishes Theorem~\ref{thm1} and Theorem~\ref{thm2}. Finally we give some remarks and questions for future research in Section~\ref{sec:Remarks}.

\section{Representation theory of the $0$-Hecke algebra}\label{sec:H0}

We review the representation theory of the $0$-Hecke algebra in this section.

\subsection{Symmetric group}
The symmetric group $\SS_n$ is generated by $s_1,\ldots,s_{n-1}$, where $s_i:=(i,i+1)$ is the simple transposition of $i$ and $i+1$, with relations
\[
\left\{\begin{array}{ll}
s_i^2=1, & 1\leq i\leq n-1,\\
s_is_j=s_js_i, & 1\leq i,j\leq n-1,\ |i-j|>1,\\
s_is_{i+1}s_i = s_{i+1}s_is_{i+1}, & 1\leq i\leq n-2. 
\end{array}\right.
\]
Let $w$ be a permutation in $\SS_n$. If an expression $w=s_{i_1}\cdots s_{i_k}$ is the shortest one among all such expressions, then it is said to be \emph{reduced} and $k$ is the \emph{length} $\ell(w)$ of $w$. The \emph{descent set} of $w$ is $D(w):=\{i\in[n-1]: \ell(ws_i)<\ell(w)\}$ and its elements are called the \emph{descents} of $w$. One has $\ell(w)=\inv(w)=\#\Inv(w)$ where 
\[
\Inv(w):= \{(i,j):1\leq i<j\leq n,\ w(i)>w(j) \}
\]
is the set of inversion pairs of $w$. One also has 
\[
D(w) = \{ i: 1\leq i\leq n-1,\ w(i)>w(i+1) \}.
\]
We denote $\des(w):=|D(w)|$ and $\maj(w):=\sum_{i\in D(w)} i$.

It is often convenient to use compositions when studying $\SS_n$. A \emph{composition} is a sequence $\alpha=(\alpha_1,\ldots,\alpha_\ell)$ of positive integers $\alpha_1,\ldots,\alpha_\ell$. The \emph{length} of $\alpha$ is $\ell(\alpha):=\ell$ and the \emph{size} of $\alpha$ is $|\alpha|:=\alpha_1+\cdots+\alpha_\ell$. If the size of $\alpha$ is $n$ then we say that $\alpha$ is a composition of $n$ and write $\alpha\models n$. 

Let $\alpha=(\alpha_1,\ldots,\alpha_\ell)$ be a composition of $n$. We write $\sigma_j:=\alpha_1+\cdots+\alpha_j$ for $j=0,1,\ldots,\ell$; in particular, $\sigma_0=0$ and $\sigma_\ell=n$. The \emph{descent set} of $\alpha$ is $D(\alpha):=\{\sigma_1,\ldots,\sigma_{\ell-1}\}$. The map $\alpha\mapsto D(\alpha)$ is a bijection between compositions of $n$ and subsets of $[n-1]$. Let $\alpha^c$ be the composition of $n$ with $D(\alpha^c)=[n-1]\setminus D(\alpha)$. 

The \emph{parabolic subgroup} $\SS_\alpha$ is the subgroup of $\SS_n$ generated by $\{s_i:i\in D(\alpha^c) \}$; a permutation $w\in \SS_n$ belongs to $\SS_\alpha$ if and only if it permutes $\sigma_{i-1}+1,\ldots,\sigma_i$ for all $i=1,\ldots,\ell$. The set of all minimal $\SS_\alpha$-coset representatives is $\SS^\alpha:=\{w\in \SS_n: D(w)\subseteq D(\alpha)\}$.

Given a composition $\alpha$ of $n$, the \emph{descent class} of $\alpha$ consists of the permutations in $\SS_n$ with descent set equal to $D(\alpha)$, and turns out to be an interval under the (left) weak order of $\SS_n$, which is denoted by $[w_0(\alpha),w_1(\alpha)]$. One sees that $w_0(\alpha)$ is the longest element of the parabolic subgroup $\SS_{\alpha^c}$, and $w_1(\alpha)$ is the longest element in $\SS^{\alpha}$ (c.f. Bj\"orner and Wachs~\cite[Theorem~6.2]{BjornerWachs}).

\subsection{$0$-Hecke algebra}

The $0$-Hecke algebra $H_n(0)$ is a deformation of the group algebra of $\SS_n$. It is defined as the $\FF$-algebra generated by $\overline\pi_1,\ldots,\overline\pi_{n-1}$ with relations
\[
\left\{\begin{array}{ll}
\pib_i^2=-\pib_i, & 1\leq i\leq n-1,\\
\pib_i\pib_j=\pib_j\pib_i, & 1\leq i,j\leq n-1,\ |i-j|>1,\\
\pib_i\pib_{i+1}\pib_i = \pib_{i+1}\pib_i\pib_{i+1}, & 1\leq i\leq n-2. 
\end{array}\right.
\]
Let $\pi_i:=\overline\pi_i+1$. Then $\pi_1,\ldots,\pi_{n-1}$ form another generating set for $H_n(0)$, with relations
\[
\left\{\begin{array}{ll}
\pi_i^2=\pi_i, & 1\leq i\leq n-1,\\
\pi_i\pi_j=\pi_j\pi_i, & 1\leq i,j\leq n-1,\ |i-j|>1,\\
\pi_i\pi_{i+1}\pi_i = \pi_{i+1}\pi_i\pi_{i+1}, & 1\leq i\leq n-2. 
\end{array}\right.
\]
If $w=s_{i_1}\cdots s_{i_k}$ is a reduced expression then  $\overline\pi_w:=\overline\pi_{i_1}\cdots\overline\pi_{i_k}$ and $\pi_w:=\pi_{i_1}\cdots\pi_{i_k}$ are well defined. Both $\{\overline\pi_w:w\in\SS_n\}$ and $\{\pi_w:w\in\SS_n\}$ are $\FF$-bases for $H_n(0)$. One can check that $\pi_w$ equals the sum of $\overline\pi_u$ over all $u$ less than or equal to $w$ in the Bruhat order of $\SS_n$. In particular, one has
\[ \pi_{w_0(\alpha)}=\sum_{u\in\SS_{\alpha^c}} \overline\pi_u,\quad \forall \alpha\models n. \]

Norton~\cite{Norton} decomposed the $0$-Hecke algebra $H_n(0)$ into the following direct sum 
\[
H_n(0)=\bigoplus_{\alpha\models n} \P_\alpha.
\]
Each summand $\P_\alpha:=H_n(0)\cdot \overline\pi_{w_0(\alpha)}\pi_{w_0(\alpha^c)}$ has an $\FF$-basis $\left\{ \overline\pi_w \pi_{w_0(\alpha^c)}: w\in [w_0(\alpha),w_1(\alpha)] \right\}$. Its \emph{radical} ${\rm rad}\,\P_\alpha$ is defined as the intersection of all maximal $H_n(0)$-submodules in general, and turns out to be the unique maximal $H_n(0)$-submodule spanned by $\{\pib_w\pi_{w_0(\alpha^c)}:w\in(w_0(\alpha),w_1(\alpha)]\}$ in this case. Although $\P_\alpha$ itself is not necessarily simple, its \emph{top} $\C_\alpha:= \P_\alpha /\,{\rm rad}\,\P_\alpha$ is a one-dimensional  simple $H_n(0)$-module with the action of $H_n(0)$ given by
\[
\pib_i=\left\{\begin{array}{ll}
-1,& {\rm if}\ i\in D(\alpha),\\
0,& {\rm if}\ i\notin D(\alpha).
\end{array}\right.
\]
It follows from general representation theory of algebras (see e.g. \cite[\S I.5]{ASS}) that $\{\P_\alpha: \alpha\models n\}$ is a complete list of pairwise non-isomorphic projective indecomposable $H_n(0)$-modules and $\{\C_\alpha: \alpha \models n\}$ is a complete list of pairwise non-isomorphic simple $H_n(0)$-modules.

\subsection{Quasisymmetric functions and noncommutative symmetric functions}\label{sec:QSymNSym}
The Hopf algebra $\QSym$ of quasisymmetric functions is a free $\ZZ$-module on the \emph{monomial quasisymmetric functions}
\[
M_\alpha:=\sum_{i_1>\cdots>i_\ell\geq1} x_{i_1}^{\alpha_1}\cdots x_{i_\ell}^{\alpha_\ell}
\]
for all compositions $\alpha=(\alpha_1,\ldots,\alpha_\ell)$. Another free $\ZZ$-basis consists of the \emph{fundamental quasisymmetric functions}
\[
F_\alpha:=\sum_{\alpha\cleq\beta} M_\beta = \sum_{\substack{ i_1\geq\cdots\geq i_n\geq1\\ j\in D(\alpha)\Rightarrow i_j>i_{j+1} }} x_{i_1}\cdots x_{i_n}
\]
for all compositions $\alpha$. Here $\alpha\cleq\beta$ means that $\alpha$ and $\beta$ are both compositions of $n$ with $D(\alpha)\subseteq D(\beta)$, or in other words, $\alpha$ is refined by $\beta$. Since $\alpha\mapsto D(\alpha)$ is a bijection, we sometimes write $F_I:=F_\alpha$ if $n$ is clear from the context and $I=D(\alpha)\subseteq[n-1]$. One has an inclusion $\Sym\subseteq \QSym$ of Hopf algebras, where $\Sym$ is the \emph{ring of symmetric functions}.

The reader might notice that the above definition for $M_\alpha$ and $F_\alpha$ is slightly different from the standard one, as the inequalities of the subscripts are reversed. This difference is certainly not essential, and the definition given here has the advantage that the principal specialization $F_\alpha(1,x,x^2,\ldots)$ involves $\maj(\alpha):=\sum_{i\in D(\alpha)} i$ rather than $\maj(\overleftarrow{\alpha})$, where $\overleftarrow\alpha:=(\alpha_\ell,\ldots,\alpha_1)$ if $\alpha=(\alpha_1,\ldots,\alpha_\ell)$. We will use this in \S\ref{sec:Ch}.

The Hopf algebra $\NSym$ is the free associative algebra $\ZZ\langle\bh_1,\bh_2,\ldots\rangle$ where
\[
\bh_k:=\sum_{1\leq i_1\leq\cdots\leq i_k}\bx_{i_1}\cdots \bx_{i_k}. 
\]
It has a $\ZZ$-basis of the \emph{complete homogeneous noncommutative symmetric functions} $\bh_\alpha:=\bh_{\alpha_1}\cdots \bh_{\alpha_\ell}$ for all compositions $\alpha=(\alpha_1,\ldots,\alpha_\ell)$. Another free $\ZZ$-basis consists of the \emph{noncommutative ribbon Schur functions}
\[
\bs_\alpha:=\sum_{\beta\cleq\alpha}(-1)^{\ell(\alpha)-\ell(\beta)}\bh_\beta
\]
for all compositions $\alpha$. If $\alpha=(\alpha_1,\ldots,\alpha_\ell)$ and $\beta=(\beta_1,\ldots,\beta_k)$ are two compositions then $\bs_\alpha \bs_\beta = \bs_{\alpha\beta}+\bs_{\alpha\rhd\beta}$ where 
\[
\alpha\beta:=(\alpha_1,\ldots,\alpha_\ell,\beta_1,\ldots,\beta_k),
\]
\[
\alpha\rhd\beta:=(\alpha_1,\ldots,\alpha_{\ell-1},\alpha_\ell+\beta_1,\beta_2,\ldots,\beta_k).
\]
There is a projection $\NSym\twoheadrightarrow\Sym$ of Hopf algebras given by taking the commutative image, i.e. sending the noncommutative variables $\bx_i$ to the commutative variables $x_i$ for all elements in $\NSym$.

The duality between $\QSym$ and $\NSym$ is given by the pairing $\langle M_\alpha, \bh_\beta\rangle = \langle F_\alpha, \bs_\beta\rangle :=\delta_{\alpha,\,\beta}$. 

\subsection{Characteristic maps}\label{sec:char}
Now we recall the classic correspondence between (complex) $\SS_n$-representations and symmetric functions, and a similar correspondence by Krob and Thibon~\cite{KrobThibon} for $H_n(0)$-representations (over any field $\FF$). See also Bergeron and Li~\cite{BergeronLi}.

Let $A$ be an $\FF$-algebra and let $\mathcal C$ be a category of some finitely generated $A$-modules. The \emph{Grothendieck group of $\mathcal C$} is defined as the abelian group $F/R$, where $F$ is the free abelian group on the isomorphism classes $[M]$ of the $A$-modules $M$ in $\mathcal C$, and $R$ is the subgroup of $F$ generated by the elements $[M]-[L]-[N]$ corresponding to all exact sequences $0\to L\to M\to N\to0$ of $A$-modules in $\mathcal C$. Note that if $A$ is semisimple, or if $L,M,N$ are all projective $A$-modules, then $0\to L\to M\to N\to0$ implies $M\cong L\oplus N$. We often identify an $A$-module in $\mathcal C$ with the corresponding element in the Grothendieck group of $\mathcal C$.

Denote by $G_0(\SS_n)$ the Grothendieck group of the category of all finitely generated $\CC\SS_n$-modules. The simple $\CC\SS_n$-modules $S_\lambda$ are indexed by partitions $\lambda$ of $n$, and every finitely generated $\CC\SS_n$-module is a direct sum of simple $\SS_n$-modules. Thus $G_0(\SS_n)$ is a free abelian group on the isomorphism classes $[S_\lambda]$ for all partitions $\lambda$ of $n$. The tower of groups $\SS_\bullet: \SS_0\subset\SS_1\subset\SS_2\subset\cdots$ has a Grothendieck group 	
\[
G_0(\SS_\bullet) := \bigoplus_{n\geq0} G_0(\SS_n)
\]
which turns out to be a self-dual Hopf algebra with product and coproduct given by induction and restriction of representations. The \emph{Frobenius characteristic map} ch is defined by sending a simple $S_\lambda$ to the Schur function $s_\lambda$, giving a Hopf algebra isomorphism between the Grothendieck group $G_0(\SS_\bullet)$ and the Hopf algebra $\Sym$ of symmetric functions. 

The Grothendieck group of the category of all finitely generated $H_n(0)$-modules is denoted by $G_0(H_n(0))$, and the Grothendieck group of the category of finitely generated projective $H_n(0)$-modules is denoted by $K_0(H_n(0))$. By the result of Norton~\cite{Norton}, one has 
\[
G_0(H_n(0))=\bigoplus_{\alpha\models n} \ZZ\cdot [\C_\alpha],\quad
K_0(H_n(0))=\bigoplus_{\alpha\models n} \ZZ\cdot [\P_\alpha].
\]
The tower of algebras $H_\bullet(0): H_0(0)\subset H_1(0)\subset H_2(0)\subset\cdots$ has two Grothendieck groups
\[
G_0(H_\bullet(0)):=\bigoplus_{n\geq0}G_0(H_n(0)),\quad
K_0(H_\bullet(0)):=\bigoplus_{n\geq0}K_0(H_n(0)).
\]
These two Grothendieck groups are dual Hopf algebras with product and coproduct again given by induction and restriction of representations. Krob and Thibon~\cite{KrobThibon} introduced Hopf algebra isomorphisms
\[
\mathrm{Ch}: G_0(H_\bullet(0))\cong\QSym,\quad \mathbf{ch}: K_0(H_\bullet(0))\cong \NSym
\]
which we review next.

Let $M=M_0\supseteq M_1\supseteq \cdots \supseteq M_k \supseteq M_{k+1}=0$ be a composition series of $H_n(0)$-modules with simple factors $M_i/M_{i+1} \cong \C_{\alpha^{(i)}}$ for $i=0,1,\ldots,k$. Then the \emph{quasisymmetric characteristic} of $M$ is
\[ 
\mathrm{Ch}(M):=F_{\alpha^{(0)}}+\cdots+F_{\alpha^{(k)}}. 
\]
This is well defined by the Jordan-H\"older theorem. The \emph{noncommutative characteristic} of a projective $H_n(0)$-module $M \cong \P_{\alpha^{(1)}}\oplus\cdots\oplus\P_{\alpha^{(k)}}$ is
\[
\mathbf{ch}(M):=\bs_{\alpha^{(1)}}+\cdots+\bs_{\alpha^{(k)}}.
\]
Krob and Thibon~\cite{KrobThibon} also defined the \emph{length-graded quasisymmetric characteristic} of a cyclic $H_n(0)$-module $N=H_n(0)v$ to be 
\[
\mathrm{Ch}_q(N):=\sum_{\ell\geq0} q^\ell \mathrm{Ch}(N^{(\ell)}/N^{(\ell+1)})
\]
where $N^{(\ell)}$ is the $H_n(0)$-submodule of $N$ spanned by $\{\pib_w v: w\in \SS_n,\ \ell(w)\geq \ell\}$. For example (c.f. Lemma~\ref{lem:Na}), if $\alpha$ is a composition of $n$, then the cyclic $H_n(0)$-module $H_n(0)\pi_{w_0(\alpha^c)}$ has length-graded quasisymmetric characteristic
\[
\mathrm{Ch}_q(H_n(0)\pi_{w_0(\alpha^c)}) = \sum_{w\in\SS^\alpha} q^{\inv(w)} F_{D(w^{-1})}.
\]

We often consider multigraded $H_n(0)$-modules $M$ with countably many homogeneous components that are all finite dimensional. Since each component of $M$ has a well-defined quasisymmetric characteristic and a multigrading, we obtain a multigraded quasisymmetric characteristic of $M$, which can be combined with the aforementioned length-graded quasisymmetric characteristic if in addition every homogeneous component is cyclic. If $M$ is projective then one has a multigraded noncommutative characteristic of $M$. The multigraded Frobenius characteristic is defined in the same way for multigraded $\CC\SS_n$-modules.


\section{Stanley-Reisner ring of the Boolean algebra}\label{sec:SRBoolean}

In this section we study the Stanley-Reisner ring of the Boolean algebra.

\subsection{Definition of the Stanley-Reisner ring}
An \emph{(abstract) simplicial complex} $\Delta$ on a vertex set $V$ is a collection of finite subsets of $V$, called \emph{faces}, such that any subset of a face is also a face. The \emph{dimension of a face} $F$ is $|F|-1$, and the \emph{dimension of a simplicial complex} is the maximum dimension of its faces. A $(d-1)$-dimensional simplicial complex is \emph{balanced} if there exists a coloring map $r:V\to[d]$ such that every face consists of vertices of distinct colors. The \emph{rank set} of a face $F$, denote by $r(F)$, consists of the colors of all its vertices.

The \emph{Stanley-Reisner ring} $\FF[\Delta]$ of a simplicial complex $\Delta$ over a field $\FF$ is defined as the quotient of the polynomial ring $\FF[y_v:v\in V]$ by its ideal 
\[
I_\Delta:=\left(y_u y_v :u,v\in V,\ \{u,v\}\notin\Delta\right).
\]
A monomial $y_{v_1}\cdots y_{v_k}$ is nonzero if and only if $v_1,\ldots v_k$ belong to the same face of $\Delta$. All nonzero monomials form an $\FF$-basis for $\FF[\Delta]$.

If $\Delta$ is balanced then its Stanley-Reisner ring $\FF[\Delta]$ is multigraded: any nonzero monomial $m=y_{v_1}\cdots y_{v_k}$ has \emph{rank multiset} $r(m)$ equal to the multiset of the ranks $r(v_1),\ldots,r(v_k)$, and receives the multigrading $t_{r(v_1)}\cdots t_{r(v_k)}$. The \emph{rank polynomials} in $\FF[\Delta]$ are the homogeneous elements
\[
\theta_i:=\sum_{r(v)=i} y_v,\quad i=1,\ldots,d.
\]
One sees that $\theta_1^{a_1}\cdots\theta_d^{a_d}$ equals the sum of all nonzero monomials $m$ with $r(m)=\{1^{a_1},\ldots,d^{a_d}\}$. Hence $\FF[\Theta]$ is a polynomial subalgebra of $\FF[\Delta]$, where $\Theta:=\{\theta_1,\ldots,\theta_d\}$. 

The Stanley-Reisner ring $\FF[P]$ of a finite poset $P$ is the same as the Stanley-Reisner ring of its \emph{order complex}, whose faces are the chains in $P$ ordered by reverse refinement. Explicitly,
\[
\FF[P]:=\FF[y_v:v\in P]/(y_uy_v:\textrm{ $u$ and $v$ are incomparable in $P$}).
\]
The nonzero monomials in $\FF[P]$ are indexed by multichains of $P$. Multiplying nonzero monomials corresponds to merging the indexing multichains; the result is zero if the multichains cannot be merged together. If $P$ is ranked then its order complex is balanced and its Stanley-Reisner ring $\FF[P]$ is multigraded.	

\subsection{Boolean algebra}\label{sec:Boolean}

The Boolean algebra $\cB_n$ is the ranked poset of all subsets of $[n]$ ordered by inclusion, with minimum element $\emptyset$ and maximum element $[n]$. The rank of a subset of $[n]$ is defined as its cardinality. By definition given earlier, the Stanley-Reisner ring $\FF[\cB_n]$ of the Boolean algebra $\cB_n$ is the quotient of the polynomial algebra $\FF\left[y_{\ds A}:A\subseteq [n] \right]$ by the ideal $\left( y_{\ds A} y_{\ds B}: \textrm{ $A$ and $B$ are incomparable in $\cB_n$} \right)$. It has an $\FF$-basis $\{y_\MM\}$ indexed by the multichains $\MM$ in $\cB_n$, and is multigraded by the rank multisets $r(\MM)$ of the multichains $\MM$. 

The symmetric group $\SS_n$ acts on the Boolean algebra $\cB_n$ by permuting the integers $1,\ldots,n$. This induces an $\SS_n$-action on the Stanley-Reisner ring $\FF[\cB_n]$, preserving its multigrading.  The \emph{invariant algebra} $\FF[\cB_n]^{\SS_n}$ consists of all elements in $\FF[\cB_n]$ invariant under this $\SS_n$-action. For $i=0,1,\ldots,n$, the rank polynomial $\theta_i:=\sum_{|A|=i} y_A$ is obviously invariant under the $\SS_n$-action; the converse is also true.

\begin{proposition}\label{prop:W-invariants}
The invariant algebra $\FF[\cB_n]^{\SS_n}$ equals $\FF[\Theta]$, where $\Theta:=\{\theta_0,\ldots,\theta_n\}$.
\end{proposition}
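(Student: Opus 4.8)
The plan is to show the two inclusions $\FF[\Theta] \subseteq \FF[\cB_n]^{\SS_n}$ and $\FF[\cB_n]^{\SS_n} \subseteq \FF[\Theta]$. The first is immediate: each $\theta_i = \sum_{|A|=i} y_A$ is fixed by $\SS_n$ since the action permutes the subsets of $[n]$ of a given cardinality among themselves, so any polynomial in the $\theta_i$ is invariant. The substance is the reverse inclusion, and the natural tool is the $\FF$-basis $\{y_\MM\}$ indexed by multichains $\MM$ in $\cB_n$, together with the fact (recorded just before the proposition) that $\theta_0^{a_0}\cdots\theta_n^{a_n}$ equals the sum of all nonzero monomials $y_\MM$ whose rank multiset $r(\MM)$ equals $\{0^{a_0},\ldots,n^{a_n}\}$.

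First I would fix a multigrading (rank multiset) $R = \{0^{a_0},\dots,n^{a_n}\}$ and analyze the $\SS_n$-action on the set $\mathcal{M}_R$ of nonzero monomials $y_\MM$ with $r(\MM)=R$. Such a monomial corresponds to a multichain $A_1 \subseteq A_2 \subseteq \cdots$ with prescribed multiset of cardinalities; the $\SS_n$-action permutes $\mathcal{M}_R$. The key combinatorial claim is that this action is \emph{transitive} on $\mathcal{M}_R$: given two multichains with the same rank multiset, i.e. two flags of subsets $\emptyset \subseteq A_1 \subsetneq A_2 \subsetneq \cdots \subsetneq A_\ell$ and $\emptyset \subseteq B_1 \subsetneq B_2 \subsetneq \cdots \subsetneq B_\ell$ with $|A_j| = |B_j|$ for all $j$, one can find $w \in \SS_n$ with $w(A_j) = B_j$ for all $j$ — this is a standard fact, since one may build $w$ layer by layer, choosing a bijection $A_1 \to B_1$, then extending it by a bijection $A_2 \setminus A_1 \to B_2 \setminus B_1$, and so on, finally filling in $[n] \setminus A_\ell \to [n] \setminus B_\ell$. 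Consequently each $\SS_n$-orbit on the monomial basis is exactly one set $\mathcal{M}_R$, and the sum of all monomials in that orbit is precisely the corresponding monomial $\theta_0^{a_0}\cdots\theta_n^{a_n}$ in the $\theta_i$.

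Now I would finish by a standard averaging/orbit-sum argument. Any invariant element $f \in \FF[\cB_n]^{\SS_n}$ is, uniquely, an $\FF$-linear combination of the monomial basis $\{y_\MM\}$; grouping terms by rank multiset and writing $f = \sum_R f_R$ with $f_R$ supported on $\mathcal{M}_R$, invariance of $f$ together with the fact that $\SS_n$ preserves each $\mathcal{M}_R$ forces each $f_R$ to be invariant. Since $\SS_n$ acts transitively on the basis $\mathcal{M}_R$ of the span of $f_R$ and merely permutes these basis vectors, an invariant element of that span must assign equal coefficients to all monomials in $\mathcal{M}_R$; hence $f_R$ is a scalar multiple of $\sum_{y_\MM \in \mathcal{M}_R} y_\MM = \theta_0^{a_0}\cdots\theta_n^{a_n}$. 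Therefore $f \in \FF[\Theta]$, completing the proof. (One should note this argument works over an arbitrary field $\FF$ — no division by $|\SS_n|$ is needed — because we exploit that $\SS_n$ \emph{permutes} a basis rather than invoking a projection operator.)

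The main obstacle, such as it is, is pinning down the transitivity claim cleanly: one must be careful that the relevant objects are multichains (chains of \emph{distinct} subsets with multiplicities) rather than arbitrary tuples, and that the layer-by-layer construction of $w$ genuinely produces a permutation of $[n]$ respecting all the containments simultaneously. Once transitivity is in hand the rest is bookkeeping with the multigrading and the monomial basis, and the identification of orbit sums with monomials in $\Theta$ is exactly the displayed identity preceding the proposition.
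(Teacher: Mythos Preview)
Your argument is correct and is essentially the same as the paper's: both reduce to the observation that the $\SS_n$-orbits on nonzero monomials are exactly the sets of monomials with a given rank multiset, so the orbit sums (which span the invariants) are precisely the monomials $\theta_0^{a_0}\cdots\theta_n^{a_n}$. Your version is simply more explicit about the transitivity step and about why no division by $|\SS_n|$ is needed.
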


\begin{proof}
It suffices to show $\FF[\cB_n]^{\SS_n} \subseteq \FF[\Theta]$. The $\SS_n$-action on $\FF[\cB_n]$ breaks up the set of nonzero monomials into orbits, and the orbit sums form an $\FF$-basis for $\FF[\cB_n]^{\SS_n}$. The $\SS_n$-orbit of a nonzero monomial with rank multiset $\{0^{a_0},\ldots,n^{a_n}\}$ consists of all nonzero monomials with the same rank multiset, and the corresponding orbit sum equals $\theta_0^{a_0}\cdots\theta_n^{a_n}$. This completes the proof.
\end{proof}

Garsia~\cite{Garsia} showed that $\FF[\cB_n]$ is a free $\FF[\Theta]$-module on the basis of descent monomials 
\[
Y_w:=\prod_{i\in D(w)} y_{\{w(1),\ldots,w(i)\}}, \quad \forall w\in\SS_n.
\]

\subsection{Multichains in $\cB_n$}\label{sec:multichains}

We study the multichains in $\cB_n$, as they naturally index an $\FF$-basis for $\FF[\cB_n]$. We first introduce some notation. A \emph{weak composition} with length $k\geq0$ is a sequence $\alpha=(\alpha_1,\ldots,\alpha_k)$ of $k$ \emph{nonnegative} (positive for a composition) integers. The \emph{size} of $\alpha$ is $|\alpha|:=\alpha_1+\cdots+\alpha_n$. If $|\alpha|=n$ then we say $\alpha$ is a weak composition of $n$. We denote by $\Com(n,k)$ the set of all weak compositions of $n$ with length $k$. The \emph{descent multiset} of $\alpha$ is the \emph{multiset} $D(\alpha):=\{\alpha_1,\alpha_1+\alpha_2,\alpha_1+\cdots+\alpha_{k-1}\}$. The map $\alpha\mapsto D(\alpha)$ gives a bijection between weak compositions of $n$ and multisets with elements in $\{0,\ldots,n\}$. The parabolic subgroup $\SS_\alpha$ is the same as the parabolic subgroup of $\SS_n$ indexed by the underlying composition of $\alpha$ obtained by removing all zeros from $\alpha$; similarly for $\SS^\alpha$. 

The homogeneous components of $\FF[\cB_n]$ are indexed by multisets with elements in $\{0,\ldots,n\}$, or equivalently by weak compositions $\alpha$ of $n$. The $\alpha$-homogeneous component $\FF[\cB_n]_\alpha$ has an $\FF$-basis  $\{y_\MM:r(\MM)=D(\alpha)\}$.

Let $\MM=(A_1\subseteq\cdots\subseteq A_k)$ be an arbitrary multichain of length $k$ in $\cB_n$; set $A_0:=\emptyset$ and $A_{k+1}:=[n]$ by convention. Define $\alpha(\MM):=(\alpha_1,\ldots,\alpha_{k+1})$, where $\alpha_i=|A_i|-|A_{i-1}|$ for all $i\in[k+1]$. Then $\alpha(\MM)\in \Com(n,k+1)$ and $D(\alpha(\MM))=r(\MM)$, i.e. $\alpha(\MM)$ indexes the homogeneous component containing $y_\MM$. Define $\sigma(\MM)$ to be the minimal element in $\SS_n$ which sends the standard multichain $[\alpha_1]\subseteq[\alpha_1+\alpha_2]\subseteq\cdots\subseteq [\alpha_1+\cdots+\alpha_k]$ with rank multiset $D(\alpha(\MM))$ to $\MM$. Then $\sigma(\MM)\in\SS^{\alpha(\MM)}$. 

The map $\MM\mapsto(\alpha(\MM),\sigma(\MM))$ is a bijection between multichains of length $k$ in $\cB_n$ and the pairs $(\alpha,\sigma)$ of $\alpha\in\Com(n,k+1)$ and $\sigma\in\SS^\alpha$. A short way to write down this encoding of $\MM$ is to insert bars at the descent positions of $\sigma(\MM)$. For example, the length-$4$ multichain $\{2\}\subseteq \{2\} \subseteq \{1,2,4\} \subseteq [4]$ in $\cB_4$ is encoded by $2||14|3|$. 

There is another way to encode the multichain $\MM$. Let $p_i(\MM):=\min\{j\in[k+1]: i\in A_j\}$, i.e. the first position where $i$ appears in $\MM$, for $i=1,\ldots,n$. One checks that
\begin{equation} \label{eq:equiv}
\begin{cases}
p_i(\MM)>p_{i+1}(\MM) \Leftrightarrow i\in D(\sigma(\MM)^{-1}), \\
p_i(\MM)=p_{i+1}(\MM) \Leftrightarrow i\notin D(\sigma(\MM)^{-1}),\ D(s_i\sigma(\MM))\not\subseteq D(\alpha(\MM)), \\
p_i(\MM)<p_{i+1}(\MM) \Leftrightarrow i\notin D(\sigma(\MM)^{-1}),\ D(s_i\sigma(\MM))\subseteq D(\alpha(\MM)).
\end{cases}
\end{equation}
This will be used later when we study the $H_n(0)$-action on $\FF[\cB_n]$. We define $p(\MM):=(p_1(\MM),\ldots,p_n(\MM))$. Then $\MM\mapsto p(\MM)$ gives an bijection between the set of multichains with length $k$ in $\cB_n$ and the set $[k+1]^n$ of all words of length $n$ on the alphabet $[k+1]$, for any fixed integer $k\geq0$. 

Suppose that $p(\MM)=\pp=(p_1,\ldots,p_n)\in[k+1]^n$. We define $\inv(\pp):=\#\{(i,j): 1\leq i<j\leq n,\  p_i> p_j\}.$ One sees that $\inv(\sigma(\MM))=\inv(p(\MM))$. Let $\pp':=(p'_1,\ldots,p'_k)$ where $p'_i:=|\{j:p_j(\MM)\leq i\}|=|A_i|$. Then the rank multiset of $\MM$ consists of $p'_1,\ldots,p'_k$. If we draw $k+1-p_j$ boxes on the $j$-th row of a $n\times k$ rectangle for all $j\in[n]$, then $p'_i$ is the number of boxes on the $(k+1-i)$-th column. For example, the multichain $3|14||2|5$ corresponds to $\pp=(2,4,1,2,5)\in[5]^5$, and one has $\pp'=(1,3,3,4)$ and the following picture.
\[
\ydiagram
[*(gray)]{3,1,4,3,0}
*[*(white)]{4,4,4,4,4}
\]
This implies an equation which will be used later:
\begin{equation}\label{eq:box}
(q^k+\cdots+q+1)^n = \sum_{\pp\in[k+1]^n}\prod_{1\leq j\leq n} q^{k+1-p_j}
= \sum_{\pp\in[k+1]^n}\prod_{1\leq i\leq k} q^{p'_i}.
\end{equation}
Define $
D(\pp):=\{i\in[n-1]:p_i>p_{i+1}\}.
$
For example, $D(2,5,1,2,4)=\{2\}$.

These two encodings (with slightly different notation) were already used by Garsia and Gessel~\cite{GarsiaGessel} in their work on generating functions of multivariate distributions of permutation statistics. 
In the next section we will use these encodings to derive multivariate quasisymmetric function identities from $H_n(0)$-action on the Stanley-Reisner ring of $\cB_n$, giving generalizations of some results of Garsia and Gessel~\cite{GarsiaGessel}.

\subsection{Rank-selection}\label{sec:rank}

Let $\alpha$ be a composition of $n$. We define the \emph{rank-selected Boolean algebra} 
\[
\cB_\alpha:=\{A\subseteq [n]: |A|\in D(\alpha)\cup\{n\}\}
\]
which is a ranked subposet of the Boolean algebra $\cB_n$. We always exclude $\emptyset$ but keep $[n]$ because there is a nice analogy between the Stanley-Reisner ring of $\cB_n^* := \cB_{1^n} = \cB_n\setminus\{\emptyset\}$ and the polynomial ring $\FF[\px]$. 

To explain this analogy, we use the \emph{transfer map} $\tau:\FF[\cB_n]\to \FF[X]$ defined by
\[
\tau(y_\MM):= \prod_{1\leq i\leq k} \prod_{j\in A_i} x_j
\]
for all multichains $\MM=(A_1\subseteq\cdots\subseteq A_k)$ in $\cB_n$. It is \emph{not} a ring homomorphism (e.g. $y_{\{1\}}y_{\{2\}}=0$ but $x_1x_2\ne0$). Nevertheless, it restricts to an isomorphism $\tau:\FF[\cB_n^*]\cong\FF[X]$ of $\SS_n$-modules. 

The invariant algebra $\FF[\cB_n^*]^{\SS_n}$ equals the polynomial algebra $\FF[\theta_1,\ldots,\theta_n]$, and $\FF[\cB_n^*]$ is a free $\FF[\theta_1,\ldots,\theta_n]$-module on the descent basis $\{Y_w:w\in\SS_n\}$. The transfer map $\tau$ sends the rank polynomials $\theta_1,\ldots,\theta_n$ to the elementary symmetric polynomials $e_1,\ldots,e_n$, which generate the invariant algebra $\FF[X]^{\SS_n}$. It also sends the descent monomials $Y_w$ in $\FF[\cB_n^*]$ to the descent monomials 
\[
X_w:= \prod_{i\in D(w)} x_{w(1)}\cdots x_{w(i)} 
\]
in $\FF[X]$ for all $w\in\SS_n$, which form a free $\FF[X]^{\SS_n}$-basis for $\FF[X]$ (see e.g. Garsia~\cite{Garsia}). Therefore $\FF[\cB_n^*]$ is in a nice analogy with $\FF[X]$ via the transfer map $\tau$. 

\begin{remark}
The Stanley-Reisner ring $\FF[\cB_n]$ is not much different from $\FF[\cB_n^*]$, as one can see the $\FF$-algebra isomorphisms $\FF[\cB_n] \cong \FF[\cB_n^*] \otimes_\FF \FF[\theta_0]$ and $\FF[\cB_n]/(\theta_0)\cong\FF[\cB_n^*]$, where $\theta_0=y_\emptyset$.
\end{remark}

In general, the Stanley-Reisner ring of the rank-selected Boolean algebra $\cB_\alpha$ is the multigraded subalgebra of $\FF[\cB_n]$ generated by $\{y_{\ds A}:|A|\in D(\alpha)\cup\{n\}\}$. There is also a projection $\phi: \FF[\cB_n] \twoheadrightarrow \FF[\cB_\alpha]$ of multigraded algebras given by
\[
\phi(y_{\ds A}):= \begin{cases}
y_{\ds A}, & {\rm if}\ A\subseteq [n],\ |A|\in D(\alpha)\cup\{n\},\\
0, & {\rm if}\ A\subseteq[n],\ |A|\notin D(\alpha)\cup\{n\}.
\end{cases}
\]
The $\SS_n$-action preserves both the inclusion $\FF[\cB_\alpha]\subseteq \FF[\cB_n]$ and the projection $\phi$. Thus one has an isomorphism
\[
\FF[\cB_n]/( A\subseteq [n]: |A|\notin D(\alpha) \cup\{n\} ) \cong \FF[\cB_\alpha]
\] 
of multigraded $\FF$-algebras and $\SS_n$-modules.

Applying the projection $\phi$ one sees that the invariant algebra $\FF[\cB_\alpha]^{\SS_n}$ is the polynomial algebra $\FF[\Theta_\alpha]$, where $\Theta_\alpha:=\{\theta_i:i\in D(\alpha)\cup\{n\} \}$, and $\FF[\cB_\alpha]$ is a free $\FF[\Theta_\alpha]$-module on the basis of descent monomials $Y_w$ for all $w\in\SS^\alpha$.

\section{$0$-Hecke algebra action on the Stanley-Reisner ring of the Boolean algebra}\label{sec:H0SR}

In this section we define an action of the $0$-Hecke algebra $H_n(0)$ on the Stanley-Reisner ring $[\cB_n]$ and establish our main results Theorem~\ref{thm1} and Theorem~\ref{thm2}.

\subsection{Definition of the $0$-Hecke algebra action} \label{sec:H0Bn}

We saw an analogy between $\FF[\cB_n]$ and $\FF[X]$ in the last section. The usual $H_n(0)$-action on the polynomial ring $\FF[\px]$ is via the \emph{Demazure operators}
\begin{equation}\label{eq:Demazure}
\overline\pi_i(f):=\frac{x_{i+1}f-x_{i+1}s_if}{x_i-x_{i+1}},\quad \forall f\in \FF[\px],\ 1\leq i\leq n-1.
\end{equation}
The above definition is equivalent to 
\begin{equation}\label{eq:pim}
\overline\pi_i(x_i^ax_{i+1}^bm)=\left\{\begin{array}{ll} (x_i^{a-1}x_{i+1}^{b+1}+x_i^{a-2}x_{i+1}^{b+2}\cdots +x_i^bx_{i+1}^a)m, &{\rm if}\ a>b,\\
0, &{\rm if}\ a=b,\\
-(x_i^ax_{i+1}^b+x_i^{a+1}x_{i+1}^{b-1}+\cdots+x_i^{b-1}x_{i+1}^{a+1})m, &{\rm if}\ a<b.
\end{array}\right.
\end{equation}
Here $m$ is a monomial in $\FF[\px]$ containing neither $x_i$ nor $x_{i+1}$. Denote by $\pib'_i$ the operator obtained from (\ref{eq:pim}) by taking only the leading term (under the lexicographic order). Then $\pib'_1,\ldots,\pib'_{n-1}$ realize another $H_n(0)$-action on $\FF[X]$. We call it the \emph{transferred $H_n(0)$-action} because it can be obtained by applying the transfer map $\tau$ to our $H_n(0)$-action on $\FF[\cB_n]$, which we now define.

Let $\MM=(A_1\subseteq\cdots\subseteq A_k)$ be a multichain in $\cB_n$. Recall $p_i(\MM):=\min\{j\in[k+1]: i\in A_j\}$. We define  
\begin{equation}\label{def:H0Boolean}
\overline\pi_i(y_\MM):=
\begin{cases}
-y_\MM, & p_i(\MM)>p_{i+1}(\MM), \\
0, & p_i(\MM)=p_{i+1}(\MM), \\
s_i(y_\MM), & p_i(\MM)<p_{i+1}(\MM)
\end{cases}
\end{equation} 
for $i=1,\ldots,n-1$. Applying the transfer map $\tau$ one recovers $\pib'_i$. For instance, when $n=4$ one has
\[
\overline\pi_1(y_{1|34||2|})=y_{2|34||1|},\quad \pib_2(y_{1|34||2|})=-y_{1|34||2|},\quad  \overline\pi_3(y_{1|34||2|})=0.
\]
Applying the transfer map one has
\[
\pib'_1(x_1^4x_2x_3^3x_4^3)=x_1x_2^4x_3^3x_4^3,\quad \pib'_2(x_1^4x_2x_3^3x_4^3)=-x_1^4x_2x_3^3x_4^3,\quad  \pib'_3(x_1^4x_2x_3^3x_4^3)=0.
\]

It is easy to see that $\pib_i^2 = -\pib_i$ and $\pib_i\pib_j=\pib_j\pib_i$ whenever $1\leq i,j\leq n-1$ and $|i-j|>1$. For any $i\in[n-2]$, one can consider different possibilities for the relative positions of $p_i(\MM)$, $p_{i+1}(\MM)$, and $p_{i+2}(\MM)$, and verify the relation $\pib_i\pib_{i+1}\pib_i=\pib_{i+1}\pib_i\pib_{i+1}$ case by case. Hence $H_n(0)$ acts on $\FF[\cB_n]$ via the above defined operators $\pib_1,\ldots,\pib_{n-1}$. This $H_n(0)$-action  preserves the multigrading of $\FF[\cB_n]$, and thus restricts to the Stanley-Reisner ring $\FF[\cB_\alpha]$ for any composition $\alpha$ of $n$.

Another way to see that $\pib_1,\ldots,\pib_{n-1}$ realize an $H_n(0)$-action on $\FF[\cB_n]$  preserving the multigrading is to show that each homogeneous component of $\FF[\cB_n]$ is isomorphic to an $H_n(0)$-module. We will give this in Lemma~\ref{lem:Na}.

\begin{remark} 
Our $H_n(0)$ action on $\FF[\cB_n]$ also has a similar expression to (\ref{eq:Demazure}) as one can show that it has the following properties.

\vskip5pt\noindent (i)
If $f,g,h$ are elements in $\FF[\cB_n]$ such that $f=gh$ and $h$ is homogeneous, then there exists a unique element $g'$, defined as the quotient $f/h$, such that $f=g'h$ and $y_\MM h \ne 0$ for every monomial $y_\MM$ appearing in $g'$.

\vskip5pt\noindent (ii) 
Suppose that $i\in[n-1]$ and $\MM=(A_1\subseteq\cdots\subseteq A_k)$ is a multichain in $\cB_n$. Let $j$ be the largest integer in $\{0,\ldots,k\}$ such that $A_j\cap\{i,i+1\}=\emptyset$. Then $s_i(y_\MM) = y_{\ds M^{(i)}} s_i(y_{\MM_i})$ and $\overline\pi_i(y_\MM) = y_{\ds M^{(i)}} \overline\pi_i(y_{\MM_i})$, where $\MM^{(i)}: A_1 \subseteq \cdots \subseteq A_j$ and $\MM_i: A_{j+1} \subseteq \cdots \subseteq A_k$.

\vskip5pt\noindent(iii)
One has
\[
\overline\pi_i(y_{\MM_i})=\frac{ y_{\ds\{i+1\}} y_{\MM_i} - y_{\ds\{i+1\}} s_i(y_{\MM_i}) } {y_{\ds\{i\}}-y_{\ds\{i+1\}}}.
\]
\end{remark}

\subsection{Basic properties}

We define the \emph{invariant algebra} $\FF[\cB_n]^{H_n(0)}$ of the $H_n(0)$-action on $\FF[\cB_n]$ to be the trivial isotypic component of $\FF[\cB_n]$ as an $H_n(0)$-module, namely
\begin{eqnarray*}
\FF[\cB_n]^{H_n(0)} &:=& \left\{ f\in \FF[\cB_n]: \pib_i f = 0,\ i=1,\ldots,n-1 \right\} \\
&=& \left\{ f\in \FF[\cB_n]: \pi_i f = f,\ i=1,\ldots,n-1 \right\}.
\end{eqnarray*}
This is an analogue of the invariant algebra $\FF[\cB_n]^{\SS_n}$, which equals $\FF[\Theta]$ by Proposition~\ref{prop:W-invariants}, and we show that they are actually the same. 

\begin{proposition}
The invariant algebra $\FF[\cB_n]^{H_n(0)}$ equals $\FF[\Theta]$.
\end{proposition}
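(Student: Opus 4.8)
The plan is to show both inclusions $\FF[\Theta] \subseteq \FF[\cB_n]^{H_n(0)}$ and $\FF[\cB_n]^{H_n(0)} \subseteq \FF[\Theta]$. For the first inclusion, it suffices to check that each rank polynomial $\theta_i = \sum_{|A|=i} y_A$ is annihilated by every $\overline\pi_j$. Since $\overline\pi_j$ preserves the multigrading and each $\theta_i$ is homogeneous, I would work inside a single homogeneous component. More convenient is to observe that $\theta_i = \tau^{-1}$-image style reasoning is unnecessary: instead note that $\FF[\Theta] = \FF[\cB_n]^{\SS_n}$ by Proposition~\ref{prop:W-invariants}, and on the monomial basis the $H_n(0)$-action, when restricted to an $\SS_n$-orbit sum, collapses termwise. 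Concretely, fix $j$ and pair up the nonzero monomials $y_\MM$ of a given rank multiset according to the relative order of $p_j(\MM)$ and $p_{j+1}(\MM)$: the multichains $\MM$ with $p_j(\MM) < p_{j+1}(\MM)$ are matched bijectively via $s_j$ with those having $p_j(\MM) > p_{j+1}(\MM)$, and those with $p_j(\MM) = p_{j+1}(\MM)$ are fixed by $s_j$. By the definition (\ref{def:H0Boolean}), $\overline\pi_j(y_\MM) = s_j(y_\MM) = y_{s_j\MM}$ in the first case, $\overline\pi_j(y_{s_j\MM}) = -y_{s_j\MM} = -s_j(y_\MM)$ in the matched second case, and $\overline\pi_j(y_\MM) = 0$ in the third. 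Summing over a full orbit, the contributions cancel in pairs, so $\overline\pi_j(\theta_0^{a_0}\cdots\theta_n^{a_n}) = 0$; hence $\FF[\Theta] \subseteq \FF[\cB_n]^{H_n(0)}$.

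For the reverse inclusion, I would argue that $\FF[\cB_n]$ decomposes as an $\FF[\Theta]$-module with the descent monomials $Y_w$ ($w \in \SS_n$) as a free basis (Garsia), and analyze how $H_n(0)$ acts relative to this decomposition. The key point is that $H_n(0)$ acts $\FF[\Theta]$-linearly — this is stated in the introduction and should follow from property (ii) in the Remark together with the fact that each $\theta_i$ is $\SS_n$-fixed, so that multiplication by a rank polynomial commutes with each $\overline\pi_j$ on monomials whose relevant initial segments are disjoint from $\{j,j+1\}$; the general case reduces to this by (ii). Granting $\FF[\Theta]$-linearity, take $f \in \FF[\cB_n]^{H_n(0)}$ and write $f = \sum_w c_w(\Theta) Y_w$ with $c_w \in \FF[\Theta]$. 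Since the $H_n(0)$-action preserves the multigrading, I may assume $f$ is homogeneous, so only finitely many $w$ contribute and the $c_w$ are homogeneous. The goal is to show $c_w = 0$ unless $w = 1$ (equivalently unless the $Y_w$-term is a scalar), since $Y_1 = 1$ and $\FF[\Theta]\cdot Y_1 = \FF[\Theta]$.

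To force this vanishing I would use a minimal-element / triangularity argument: among the $w$ with $c_w \ne 0$, consider one, say $u$, that is maximal (or minimal) in some suitable order — for instance pick $u$ with $D(u)$ minimal, or use the weak order. If $u \ne 1$ then $D(u) \neq \emptyset$, so there is some $i \in D(u)$, meaning $u(i) > u(i+1)$; I would show that applying the corresponding $\overline\pi_i$ (or $\pi_i$) to $f$ produces a nonzero term, contradicting $\overline\pi_i f = 0$. This uses the analysis of how $\overline\pi_i$ acts on descent monomials via (\ref{def:H0Boolean}) and the encoding $\MM \mapsto (\alpha(\MM),\sigma(\MM))$: for the multichain underlying $Y_u$, the relation between $p_i, p_{i+1}$ and $D(\sigma^{-1})$ in (\ref{eq:equiv}) detects a descent, so $\overline\pi_i$ does not annihilate that leading term, while the contributions of the other $Y_w$ either land in $\FF[\Theta]\cdot Y_{w'}$ for strictly smaller $w'$ or cancel. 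Making the ordering and the ``strictly smaller'' bookkeeping precise — i.e. proving genuine triangularity of the $H_n(0)$-action on the descent basis — is the main obstacle, and I expect it to mirror the corresponding argument for the transferred action on $\FF[\px]$ in the author's earlier paper \cite{H}. An alternative, cleaner route, which I would fall back on if the triangularity gets messy, is purely dimensional: by Lemma~\ref{lem:Na} each homogeneous component $\FF[\cB_n]_\alpha$ is an explicit $H_n(0)$-module whose trivial isotypic component (equivalently, the space killed by all $\overline\pi_i$) can be computed to be exactly one-dimensional, spanned by $\theta_0^{a_0}\cdots\theta_n^{a_n}$; comparing with the first inclusion then gives equality componentwise, hence $\FF[\cB_n]^{H_n(0)} = \FF[\Theta]$.
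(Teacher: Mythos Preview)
Your argument for the inclusion $\FF[\Theta]\subseteq\FF[\cB_n]^{H_n(0)}$ is correct and is exactly the pairing observation the paper uses. The point you are missing is that this same observation is an \emph{equivalence}, and it already gives the reverse inclusion without any triangularity, $\Theta$-linearity, or appeal to Lemma~\ref{lem:Na}. Concretely, fix $i$ and write an arbitrary $f$ as
\[
f=\sum_{\MM\in\mathcal M_1} a_\MM\,y_\MM+\sum_{\MM\in\mathcal M_2} b_\MM\,y_\MM+\sum_{\MM\in\mathcal M_3} c_\MM\,y_\MM,
\]
with $\mathcal M_1,\mathcal M_2,\mathcal M_3$ the multichains with $p_i<p_{i+1}$, $p_i=p_{i+1}$, $p_i>p_{i+1}$ respectively. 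From the definition of $\overline\pi_i$ one computes
\[
\overline\pi_i(f)=\sum_{\MM'\in\mathcal M_3}\bigl(a_{s_i\MM'}-c_{\MM'}\bigr)\,y_{\MM'},
\]
so $\overline\pi_i(f)=0$ forces $c_{\MM'}=a_{s_i\MM'}$ for every $\MM'\in\mathcal M_3$, i.e.\ $s_i(f)=f$. Thus $\overline\pi_i(f)=0\iff s_i(f)=f$, and intersecting over all $i$ gives $\FF[\cB_n]^{H_n(0)}=\FF[\cB_n]^{\SS_n}=\FF[\Theta]$ by Proposition~\ref{prop:W-invariants}. This is the paper's entire proof.

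Your two proposed routes for the reverse inclusion are not wrong in spirit, but neither is complete as stated, and both are far heavier than necessary. In the triangularity argument you conflate $D(u)$ with $D(u^{-1})$: the condition governing $\overline\pi_i(Y_u)$ via \eqref{eq:equiv} is $i\in D(u^{-1})$, not $i\in D(u)$; this is reparable (if $u\ne1$ then $D(u^{-1})\ne\emptyset$), but you would still owe a genuine proof that the other $Y_w$-contributions do not interfere, and invoking $\Theta$-linearity here is circular since in the paper that proposition comes \emph{after} the one you are proving. The Lemma~\ref{lem:Na} route requires computing the trivial socle of each $H_n(0)\pi_{w_0(\alpha^c)}$, which is doable but again amounts to redoing the pairing computation inside each homogeneous component.
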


\begin{proof}
Let $i\in[n-1]$. We denote by $\mathcal M_1$, $\mathcal M_2$, and $\mathcal M_3$ the sets of all multichains $\MM$ in $\cB_n$ with $p_i(\MM)<p_{i+1}(\MM)$, $p_i(\MM)=p_{i+1}(\MM)$, and $p_i(\MM)>p_{i+1}(\MM)$, respectively.
The action of $s_i$ pointwise fixes $\mathcal M_2$ and bijectively sends $\mathcal M_1$ to $\mathcal M_3$. It follows from (\ref{def:H0Boolean}) that $\pib_i(f)=0$ if and only if 
\[
f=\sum_{\MM\in \mathcal M_1} a_\MM (y_\MM+y_{s_i \MM}) +\sum_{\MM\in \mathcal M_2} b_\MM y_\MM,\quad a,b\in\FF.
\]
This is also equivalent to $s_i(f)=f$. Therefore $\FF[\cB_n]^{H_n(0)}=\FF[\cB_n]^{\SS_n}=\FF[\Theta]$.
\end{proof}

The $\SS_n$-action on $\FF[\cB_n]$ is $\Theta$-linear, and so is the $H_n(0)$-action. 

\begin{proposition}\label{prop:linear}
The $H_n(0)$-action on $\FF[\cB_n]$ is $\Theta$-linear.
\end{proposition}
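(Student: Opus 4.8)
The plan is to show that multiplication by each rank polynomial $\theta_j$ commutes with each generator $\overline\pi_i$ of $H_n(0)$, which suffices since the $\theta_j$ generate $\FF[\Theta]$ and the $\overline\pi_i$ generate $H_n(0)$. Because both $\overline\pi_i$ and multiplication by $\theta_j$ preserve the multigrading, it is enough to check the identity $\overline\pi_i(\theta_j\, y_\MM) = \theta_j\, \overline\pi_i(y_\MM)$ on each basis monomial $y_\MM$, where $\MM=(A_1\subseteq\cdots\subseteq A_k)$ is a multichain in $\cB_n$. Here $\theta_j\, y_\MM = \sum_{|B|=j} y_{\MM\cup B}$, the sum over all $B\subseteq[n]$ with $|B|=j$ such that $B$ is comparable to every $A_\ell$ in $\MM$ (so that the merged multichain $\MM\cup B$ makes sense); the other terms vanish.

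The key observation is how inserting such a $B$ into $\MM$ affects the statistic governing \eqref{def:H0Boolean}, namely the relative order of $p_i$ and $p_{i+1}$. Writing $\MM' := \MM\cup B$, one has $p_i(\MM') \le p_i(\MM) + 1$ and similarly for $i+1$, and more precisely $p_i$ is shifted by $1$ exactly when $B$ is inserted at a position at or before $p_i(\MM)$ and $i\notin B$; inserting $B$ can only change $p_i$ and $p_{i+1}$ by the same amount unless $B$ "separates" $i$ from $i+1$. I would organize the verification by the three cases of \eqref{def:H0Boolean} applied to $\MM$. If $p_i(\MM)>p_{i+1}(\MM)$, I claim $p_i(\MM\cup B) \ge p_{i+1}(\MM\cup B)$ with equality impossible, so every term $y_{\MM\cup B}$ on the left is acted on by $-1$, matching $\theta_j\cdot(-y_\MM)$ on the right. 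If $p_i(\MM)<p_{i+1}(\MM)$, the terms $y_{\MM\cup B}$ with $i,i+1$ still satisfying $p_i < p_{i+1}$ get sent to $s_i(y_{\MM\cup B})$, and one checks that $s_i$ of the set $\{$comparable $B$'s$\}$, after merging, is again a legal insertion into $s_i(\MM)$; the remaining $B$'s — those that separate $i$ and $i+1$, forcing $p_i(\MM\cup B) = p_{i+1}(\MM\cup B)$ — are killed by $\overline\pi_i$, and I must show these pair up with the $B$'s for which $s_i(\MM\cup B)$ is not comparable (again killed), or contribute symmetrically. The case $p_i(\MM)=p_{i+1}(\MM)$ on the right gives $0$, and on the left each inserted $B$ keeps $p_i(\MM\cup B)=p_{i+1}(\MM\cup B)$ unless $B$ contains exactly one of $i,i+1$ and is inserted at position $p_i(\MM)$; such $B$'s come in pairs $B \leftrightarrow s_i(B)$ whose images $\pm y$ cancel.

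Alternatively, and more cleanly, I would invoke the description of $\FF[\cB_n]^{H_n(0)}$ just established: since $\overline\pi_i(f)=0$ iff $f$ lies in the span of the $s_i$-symmetrized monomials and the $s_i$-fixed monomials, and each $\theta_j$ is $\SS_n$-invariant, the operator "multiply by $\theta_j$" commutes with the symmetrization operators $y_\MM \mapsto y_\MM + y_{s_i\MM}$ (on $\mathcal M_1$) and fixes the relevant subspaces, because $\theta_j$ is $s_i$-invariant and $s_i(\theta_j\,y_\MM) = \theta_j\, s_i(y_\MM)$. Concretely: $\overline\pi_i(\theta_j f)$ and $\theta_j\overline\pi_i(f)$ both depend only on how $\theta_j f$ and $f$ decompose under $s_i$, and since $s_i(\theta_j f) = \theta_j\, s_i(f)$, writing $f = f_+ + f_- + f_0$ for the $s_i$-isotypic-plus-flag decomposition used in the previous proof, multiplication by $\theta_j$ respects this decomposition, whence the two sides agree. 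The main obstacle is bookkeeping in the direct approach — precisely tracking which insertions $B$ change the $p_i$-vs-$p_{i+1}$ comparison and checking the cancellations — so I expect to use the second approach, reducing everything to the single identity $s_i(\theta_j\, y_\MM) = \theta_j\, s_i(y_\MM)$ (immediate from $s_i$-invariance of $\theta_j$) together with the characterization of $\ker\overline\pi_i$ from the proof of the preceding proposition.
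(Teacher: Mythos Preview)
Your first, direct approach is exactly the one the paper takes, and it works. One simplification you should know about: in the case $p_i(\MM)<p_{i+1}(\MM)$ you anticipate having to handle insertions $B$ that ``separate'' $i$ from $i+1$, but in fact there are none. If $B$ is insertable into $\MM$ with $A_j\subsetneq B\subseteq A_{j+1}$, then either $i\in A_j\subseteq B$ (so inserting $B$ does not change $p_i$), or $i\notin A_j$, which together with $p_i(\MM)<p_{i+1}(\MM)$ forces $i+1\notin A_{j+1}\supseteq B$; either way $p_i(\MM\cup B)<p_{i+1}(\MM\cup B)$. Thus every insertion keeps the strict inequality, mirroring the case $p_i(\MM)>p_{i+1}(\MM)$, and only the middle case $p_i(\MM)=p_{i+1}(\MM)$ needs the pairing $B\leftrightarrow s_i(B)$ you describe.

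Your second, ``cleaner'' approach has a genuine gap. Knowing that $\ker\overline\pi_i$ is the space of $s_i$-invariants and that multiplication by $\theta_j$ is $s_i$-equivariant is \emph{not} enough to conclude that $\theta_j$-multiplication commutes with $\overline\pi_i$. The operator $\pi_i=\overline\pi_i+1$ is an idempotent whose image is $\ker\overline\pi_i$ (the $s_i$-invariants) and whose kernel is the span of $\{y_\MM:p_i(\MM)>p_{i+1}(\MM)\}$, i.e.\ the span of $\mathcal M_3$ in the notation of the preceding proof. A linear operator commutes with a projection exactly when it preserves both the image and the kernel. Your $s_i$-equivariance argument handles the image, but the span of $\mathcal M_3$ is \emph{not} $s_i$-stable (indeed $s_i$ carries it isomorphically onto the span of $\mathcal M_1$), so its preservation under $\theta_j$-multiplication does not follow from $s_i$-equivariance alone. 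Establishing that $\theta_j\cdot\operatorname{span}(\mathcal M_3)\subseteq\operatorname{span}(\mathcal M_3)$ is precisely the content of the case $p_i(\MM)>p_{i+1}(\MM)$ in the direct argument. So once patched, your second approach collapses back into the first.
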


\begin{proof}
Let $i\in[n-1]$ and let $\MM=(A_1\subseteq\cdots\subseteq A_k)$ be an arbitrary multichain in $\cB_n$. Since $\theta_0=\emptyset$, one has $\pib_i(\theta_0 y_\MM) = \theta_0 \pib_i(y_\MM)$. It remains to show $\pib_i(\theta_r y_{\MM})=\theta_r \pib_i(y_{\MM})$ for any $r\in[n]$. One has $|A_j|< r \leq|A_{j+1}|$ for some $j\in\{0,1,\ldots,k\}$, where $A_0=\emptyset$ and $A_{k+1}=[n]$ by convention. Then
\[
\theta_r y_\MM=\sum_{A\in \mathcal A} y_\MM y_{\ds A}.
\]
where 
\[
\mathcal A:=\{A\subseteq[n]: |A|=r,\ A_j\subsetneq A\subseteq A_{j+1} \}.
\]
If $A\in\mathcal A$ then $y_\MM y_{\ds A} = y_{\MM\cup A}$ where $\MM\cup A$ is the multichain obtained by inserting $A$ into $\MM$. Let $\mathcal A_1$ [ $\mathcal A_2$, $\mathcal A_3$, resp. ] be the collections of all sets $A$ in $\mathcal A$ satisfying 
\[
p_i(\MM\cup A) < \textrm{ [$=$, $>$, resp.] } p_{i+1}(\MM\cup A)
\]
We distinguish three cases below.

If $p_i(\MM)>p_{i+1}(\MM)$, then $\pib_i(y_\MM)=-y_\MM$. Assume $A\in \mathcal A$. One has $p_i(\MM\cup A)>p_{i+1}(\MM\cup A)$ when $i\notin A$. If $i\in A \subseteq A_{j+1}$ then $p_i(\MM)>p_{i+1}(\MM)$ forces $i+1\in A_j \subseteq A$ and one still has $p_i(\MM\cup A) > p_{i+1}(\MM\cup A)$. Hence $\mathcal A = \mathcal A_3$ which implies $\pib_i(\theta_r y_\MM) = - \theta_r y_\MM = \theta_r \pib_i(y_\MM)$.

If $p_i(\MM)=p_{i+1}(\MM)$, then $\pib_i(y_\MM)=0$ and we need to show $\pib_i(\theta_r y_\MM)=0$. First assume $A\in\mathcal A_1$. Then $A$ contains $i$ but not $i+1$, $A_j$ contains neither, and $A_{j+1}$ contains both. Hence $s_i(A)\in\mathcal A_3$ and 
\[
\pib_i(y_{\MM\cup A}) = s_i(y_{\MM\cup A}) = y_{s_i(\MM\cup A)}.
\]
Similarly if $A\in\mathcal A_3$ then $s_i(A)\in\mathcal A_1$ and thus $s_i$ gives an bijection between $\mathcal A_1$ and $\mathcal A_3$. For any $A\in\mathcal A_2$ one has $\pi_i(y_{\MM\cup A})=0$. Therefore 
\[
\pib_i(\theta_r y_\MM) = \sum_{A\in\mathcal A_1} \pib_i(y_{\MM\cup A} + \pib_i(y_{\MM\cup A}) ) = 0.
\]
Here the last equality follows from the relation $\pib_i^2=-\pib_i$. 

Finally, we consider the case $p_i(\MM)<p_{i+1}(\MM)$. Assume $A\in\mathcal A$. One has $p_i(\MM\cup A)<p_{i+1}(\MM\cup A)$ when $i\in A_j$. If $i\notin A_j$, then $i+1\notin A_{j+1}$ and so $i+1\notin A$, which implies $p_i(\MM\cup A)<p_{i+1}(\MM\cup A)$. Thus $\mathcal A = \mathcal A_1$ and $\pib_i(\theta_r y_\MM) = s_i(\theta_r y_\MM) = \theta_r s_i(y_\MM) = \theta_r \overline\pi_i(y_\MM)$. 
\end{proof}

Therefore the \emph{coinvariant algebra} $\FF[\cB_n]/(\Theta)$ is a multigraded $H_n(0)$-module, and we will see in the next subsection that it carries the regular representation of $H_n(0)$. This cannot be obtained simply by applying the transfer map $\tau$, since $\tau$ is \emph{not} a map of $H_n(0)$-modules (see \S\ref{sec:transfer}).

\subsection{Noncommutative Hall-Littlewood symmetric functions}\label{sec:NH}
In this subsection we write a partition of $n$ as an increasing sequence $\mu=(0<\mu_1\leq\cdots\leq\mu_k)$ of positive integers whose sum is $n$, and view it as a composition in this way whenever needed. We want to establish a noncommutative analogue of the following remarkable result. 

\begin{theorem}[Hotta-Springer~\cite{HottaSpringer}, Garsia-Procesi~\cite{GarsiaProcesi}]\label{thm:Springer}
For every partition $\mu=(0<\mu_1\leq\cdots\leq\mu_k)$ of $n$, there exists an $\SS_n$-invariant ideal $J_\mu$ of $\CC[\px]$ such that $\CC[\px]/J_\mu$ is isomorphic to the cohomology ring of the Springer fiber indexed by $\mu$ and has graded Frobenius characteristic equal to the modified Hall-Littlewood symmetric function 
\[
\widetilde H_\mu(X;t)= \sum_\lambda t^{n(\mu)} K_{\lambda\mu}(t^{-1})s_\lambda \quad {\rm inside}\quad \Sym[t]
\]
where $K_{\lambda\mu}(t)$ is the Kostka-Foulkes polynomial and $n(\mu):=\mu_{k-1}+2\mu_{k-2}+\cdots+(k-1)\mu_1$.
\end{theorem}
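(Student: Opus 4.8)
I would take the combinatorial--algebraic route of Garsia and Procesi, which avoids geometry for the main computation. First replace the geometric object by an explicit quotient ring: put $R_\mu:=\CC[\px]/J_\mu$, where $J_\mu$ is the \emph{Tanisaki ideal}, generated by the partial elementary symmetric polynomials $e_r(x_{i_1},\ldots,x_{i_d})$ over all $d$-subsets $\{i_1,\ldots,i_d\}\subseteq[n]$ and all $r>d-(\mu'_n+\cdots+\mu'_{n-d+1})$, where $\mu'$ is the conjugate partition of $\mu$ padded with zeros to length $n$. This ideal is visibly homogeneous and $\SS_n$-stable. That $R_\mu$ is isomorphic, as a graded $\SS_n$-algebra, to $H^*(\mathcal B_\mu;\CC)$ for the Springer fiber $\mathcal B_\mu$ is Tanisaki's presentation of Springer fiber cohomology in type $A$ (De Concini--Procesi in general), which I would simply quote; it then remains to identify the graded Frobenius characteristic of $R_\mu$ with $\widetilde H_\mu(X;t)$.

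The plan for that identification is to show that the modules $R_\mu$ obey exactly the recursion characterizing the modified Hall--Littlewood symmetric functions, and then to induct on $n$. The steps are: (i) record the symmetric-function recursion, expressing $\widetilde H_\mu(X;t)$ through the $\widetilde H_\nu(X;t)$ with $\nu\vdash n-1$ obtained from $\mu$ by deleting a corner box, with $t$-shifts determined by $\mu$ and the removed box --- this follows from the classical recurrence for the Kostka--Foulkes polynomials $K_{\lambda\mu}(t)$ (in the spirit of Morris and of Lascoux--Schützenberger), from the small-rank base cases, and from the fact that $\mathrm{ch}(\mathrm{Res}^{\SS_n}_{\SS_{n-1}}V)=h_1^\perp\,\mathrm{ch}(V)$ for every $\SS_n$-module $V$, where $h_1^\perp$ removes a box from each Schur function; (ii) on the module side, produce a filtration of $\mathrm{Res}^{\SS_n}_{\SS_{n-1}}R_\mu$ (with $\SS_{n-1}$ acting on $x_1,\ldots,x_{n-1}$ and fixing $x_n$) whose successive quotients are the graded $\SS_{n-1}$-modules $R_\nu$ carrying precisely the degree shifts of (i), by analysing the behaviour of the Tanisaki ideal when $x_n$ is specialized and using Garsia--Procesi's explicit monomial basis for $R_\mu$; (iii) pin down the boundary data, namely $\dim_\CC R_\mu=n!/(\mu_1!\cdots\mu_k!)=\dim_\CC M^\mu$, where $M^\mu=\mathrm{Ind}_{\SS_\mu}^{\SS_n}\mathbf 1$ (consistent with $\widetilde H_\mu(X;1)=h_\mu$, and incidentally identifying the ungraded $\SS_n$-module as $M^\mu$), together with the $t$-power attached to one distinguished Schur coefficient. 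The skew-$h_1$ recursion, the inductive hypothesis in rank $n-1$, and the normalization in (iii) then determine the graded Frobenius characteristic of $R_\mu$ uniquely, forcing it to equal $\widetilde H_\mu(X;t)$.

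I expect the main obstacle to be step (ii): constructing the short exact sequences that realize the Hall--Littlewood recursion at the level of $\SS_{n-1}$-modules. This requires a genuine handle on the Tanisaki ideal, not merely on its Hilbert series, together with careful bookkeeping of degree shifts, and it is where Garsia and Procesi do the real work. The alternative is Hotta--Springer's geometric argument --- build the Springer $\SS_n$-action on $H^*(\mathcal B_\mu;\CC)$ (which vanishes in odd degrees) and identify its graded character with $\sum_\lambda t^{n(\mu)}K_{\lambda\mu}(t^{-1})s_\lambda$ by combining the Springer correspondence with Lusztig's evaluation of the relevant intersection-cohomology stalks in terms of Kostka--Foulkes polynomials --- but there the crux, the appearance of $K_{\lambda\mu}(t^{-1})$, is precisely Lusztig's theorem and is no lighter. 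Since the present paper lives among symmetric functions and explicit polynomial quotients, I would write out the combinatorial route.
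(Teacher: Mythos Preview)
The paper does not prove this theorem at all: it is stated as a known result, attributed in the theorem header to Hotta--Springer and Garsia--Procesi, and is immediately followed only by a remark recalling Tanisaki's generators for $J_\mu$. Its role in the paper is purely motivational, as the classical statement for which Theorem~\ref{thm:NHL} is the noncommutative analogue. So there is no ``paper's own proof'' to compare your proposal against.

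That said, your outline is a faithful sketch of the Garsia--Procesi argument (Tanisaki presentation, explicit monomial basis, the $\SS_{n-1}$-filtration of $R_\mu$ matching the branching recursion for $\widetilde H_\mu$, induction on $n$), and you correctly identify step~(ii) as the place where the real work lies. If you were asked to supply a proof for this background result, this is the right plan; but for the purposes of this paper a citation suffices, which is exactly what the authors do.
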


By Tanisaki~\cite{Tanisaki}, the ideal $J_\mu$ is generated by 
\[
\{e_i(S): |S| \geq i> |S|-(\mu'_1+\cdots+\mu'_{|S|}),\ S\subseteq\{x_1,\ldots,x_n\}\}
\]
where $\mu'=(0\leq\mu'_1\leq\cdots\leq\mu'_n)$ is the conjugate of the partition $\mu$ with zero parts added whenever necessary, and $e_i(S)$ is the $i$-th elementary symmetric function in the set $S$ of variables. See also Garsia and Procesi~\cite{GarsiaProcesi}. We can work over an arbitrary field $\FF$, and still denote by $J_\mu$ the ideal of $\FF[\px]$ with the same generators. For instance, if $\mu=(1^k,n-k)$ is a hook, then $\mu'=(0^k,1^{n-k-1},k+1)$, and one can check that the ideal $J_{1^k,n-k}$ is generated by $e_1,\ldots,e_k$ and all monomials $x_{i_1}\cdots x_{i_{k+1}}$ with $1\leq i_1<\cdots <i_{k+1}$.

Now we consider an arbitrary composition $\alpha=(\alpha_1,\ldots,\alpha_\ell)$. The major index of $\alpha$ is $\maj(\alpha):=\sum_{i\in D(\alpha)}i$, and viewing a partition $\mu$ as a composition one has $\maj(\mu)=n(\mu)$. Recall that $\alpha^c$ is the composition of $n$ with $D(\alpha^c)=[n-1]\setminus D(\alpha)$ and $\overleftarrow\alpha:=(\alpha_\ell,\ldots,\alpha_1)$. We define  $\alpha':= \overleftarrow{\alpha^c} = (\overleftarrow{\alpha})^c$. One can identify $\alpha$ with a \emph{ribbon diagram}, i.e. a connected skew Young diagram without $2$ by $2$ boxes, which has row lengths $\alpha_1,\ldots,\alpha_\ell$, ordered from bottom to top. Note that a ribbon diagram is a Young diagram if and only if it is a hook. One can check that $\alpha'$ is the transpose of $\alpha$; see the example below.
\[
\footnotesize
\begin{array}{ccccc}
\young(:::\hfill,:::\hfill,:\hfill\hfill\hfill,\hfill\hfill) &&
\young(:\hfill\hfill\hfill,:\hfill,\hfill\hfill,\hfill) &&
\young(:::\hfill,::\hfill\hfill,::\hfill,\hfill\hfill\hfill) \\
\alpha=(2,3,1,1) & & \alpha^c=(1,2,1,3) & & \alpha'=(3,1,2,1)
\end{array} 
\]

Bergeron and Zabrocki~\cite{BZ} introduced a noncommutative modified Hall-Littlewood symmetric function
\[
\widetilde{\mathbf{H}}_\alpha(\mathbf x;t):=\sum_{\beta\cleq\alpha} t^{\maj(\beta)} \bs_\beta \quad {\rm inside}\quad \NSym[t]
\]
and a $(q,t)$-analogue
\[
\tNH_\alpha(\bx;q,t):=\sum_{\beta\models n}t^{c(\alpha,\beta)}q^{c(\alpha',\overleftarrow{\beta})}\bs_\beta \quad {\rm inside} \quad \NSym[q,t]
\]
for every composition $\alpha$, where $c(\alpha,\beta):=\sum_{i\in D(\alpha)\cap D(\beta)} i$. In our earlier work~\cite{H} we provided a partial representation theoretic interpretation for $\widetilde{\mathbf{H}}_\alpha(\mathbf x;t)$ when $\mu=(1^k,n-k)$ is a hook, using the $H_n(0)$-action on the polynomial ring $\FF[\px]$ by the Demazure operators.

\begin{theorem}[\cite{H}]
The ideal $J_\mu$ of $\FF[\px]$ is $H_n(0)$-invariant if and only if $\mu=(1^{n-k},k)$ is a hook, and if that holds then $\FF[\px]/J_\mu$ becomes a graded projective $H_n(0)$-module with 
\[
\mathbf{ch}_t(\FF[\px]/J_\mu) = \widetilde{\mathbf{H}}_\mu(\mathbf x;t),
\]
\[
\mathrm{Ch}_t(\FF[\px]/J_\mu) = \widetilde{H}_\mu(\mathbf x;t).
\]
\end{theorem}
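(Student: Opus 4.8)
I would prove the two directions separately. The ``if'' direction is organized around the observation that, since the relevant Tanisaki generators for a hook include all of $e_1,\dots,e_n$, the algebra $\FF[\px]/J_\mu$ is a graded quotient of the $0$-Hecke coinvariant algebra $\FF[\px]/(e_1,\dots,e_n)$, whose $H_n(0)$-module structure is known from \cite{H}. Write $\mu=(1^{n-k},k)$ and set $r:=n-k$, so that $D(\mu)=\{1,\dots,r\}$, and recall $J_\mu=(e_1,\dots,e_r)+J'_\mu$, where $J'_\mu$ is the ideal generated by the squarefree monomials of degree $r+1$; note that, as an $\FF$-vector space, $J'_\mu$ is spanned by all monomials involving at least $r+1$ distinct variables, and that $e_{r+1},\dots,e_n\in J'_\mu$, so $(e_1,\dots,e_n)\subseteq J_\mu$. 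To check $\overline\pi_i J_\mu\subseteq J_\mu$ I would use two elementary facts about the Demazure operators~(\ref{eq:Demazure}): (i) $\overline\pi_i(fg)=f\,\overline\pi_i(g)$ whenever $f$ is symmetric in $x_i,x_{i+1}$, which applied with $f=e_j$ shows $\overline\pi_i$ preserves $(e_1,\dots,e_r)$; and (ii) a term-by-term inspection of~(\ref{eq:pim}) shows that $\overline\pi_i$ never decreases the number of distinct variables appearing in a monomial, so $\overline\pi_i J'_\mu\subseteq J'_\mu$. Together these give that $J_\mu$ is $H_n(0)$-invariant.

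\textbf{Structure of the quotient, and the characteristics.} By \cite{H} the coinvariant algebra carries the regular representation of $H_n(0)$, decomposing as $\bigoplus_{\beta\models n}\P_\beta$; since each $\overline\pi_i$ preserves the polynomial degree, the copy of $\P_\beta$ lies entirely in degree $\maj(\beta)$. Now a descent monomial $X_w=\prod_{i\in D(w)}x_{w(1)}\cdots x_{w(i)}$ involves exactly $\max D(w)$ distinct variables, hence $X_w\in J'_\mu$ precisely when $\max D(w)>r$, i.e.\ when $w\notin\SS^\mu$. Therefore the images of $\{X_w:w\in\SS^\mu\}$ span $\FF[\px]/J_\mu$, and since Theorem~\ref{thm:Springer} gives $\dim_\FF\FF[\px]/J_\mu=n!/k!=|\SS^\mu|$ (the explicit monomial basis of Garsia--Procesi works over any $\FF$), they form a basis. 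Consequently the image of $J'_\mu$ in the coinvariant algebra is exactly the span of $\{X_w:w\notin\SS^\mu\}=\bigsqcup_{\beta\not\cleq\mu}(\text{descent class of }\beta)$; combining this with the descent-class description of the coinvariant algebra from \cite{H} — the key point being that the straightening corrections that would move a descent monomial out of its descent class involve more than $r$ distinct variables and so die modulo $J'_\mu$ — one obtains $\FF[\px]/J_\mu\cong\bigoplus_{\beta\cleq\mu}\P_\beta$ as graded $H_n(0)$-modules, with $\P_\beta$ in degree $\maj(\beta)$. Reading off the characteristics gives $\mathbf{ch}_t(\FF[\px]/J_\mu)=\sum_{\beta\cleq\mu}t^{\maj(\beta)}\bs_\beta=\tNH_\mu(\mathbf x;t)$; and since $\mathrm{Ch}(\P_\beta)$ equals the commutative ribbon Schur function $r_\beta$, the image of $\bs_\beta$ under $\NSym\twoheadrightarrow\Sym$ (a consequence of the results of \cite{KrobThibon}), we get $\mathrm{Ch}_t(\FF[\px]/J_\mu)=\sum_{\beta\cleq\mu}t^{\maj(\beta)}r_\beta$, which equals $\widetilde H_\mu(X;t)$ by comparing Schur expansions: the coefficient of $s_\lambda$ on the left is $\sum t^{\maj(T)}$ over standard Young tableaux $T$ of shape $\lambda$ with $\mathrm{Des}(T)\subseteq\{1,\dots,r\}$, and on the right it is the corresponding (classical) tableau formula for the modified Kostka--Foulkes polynomial of a hook shape.

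\textbf{No non-hook works.} Suppose $\mu$ is not a hook; then $\mu$ has at least two parts $\ge 2$ and $2\le\ell(\mu)\le n-2$. From Tanisaki's presentation, $g:=e_{\ell(\mu)}(x_1,\dots,x_{n-1})$ is a generator of $J_\mu$ (it is $e_j(S)$ with $|S|=n-1$ and $j=\ell(\mu)$, the smallest admissible index at that level). Writing $g=e_{\ell(\mu)}(x_1,\dots,x_{n-2})+x_{n-1}\,e_{\ell(\mu)-1}(x_1,\dots,x_{n-2})$ and applying~(\ref{eq:pim}) with $i=n-1$ (which annihilates the first summand and sends $x_{n-1}\mapsto x_n$) yields
\[
\overline\pi_{n-1}(g)=x_n\,e_{\ell(\mu)-1}(x_1,\dots,x_{n-2}),
\]
a nonzero element of degree $\ell(\mu)$. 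A direct computation in degree $\ell(\mu)$ shows this is not in $J_\mu$: the only generators of $J_\mu$ of degree $\le\ell(\mu)$ are $e_1,\dots,e_{\ell(\mu)}$ and the $e_{\ell(\mu)}(S)$ with $|S|=n-1$ (that $\mu$ has at least two parts $\ge2$ forces every other generator to have strictly larger degree), and modulo $(e_1,\dots,e_{\ell(\mu)})$ the latter reduce to $x_m\,e_{\ell(\mu)-1}(\{x_1,\dots,x_n\}\setminus\{x_m\})$ for $1\le m\le n$, whose span does not contain $x_n\,e_{\ell(\mu)-1}(x_1,\dots,x_{n-2})$ modulo $(e_1,\dots,e_{\ell(\mu)})$ — this is already visible for $\mu=(2,2)$. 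Hence $J_\mu$ is not $H_n(0)$-invariant.

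\textbf{Where the difficulty lies.} The routine parts are the invariance check for hooks and the ``only if'' direction. The substantive step is the isomorphism $\FF[\px]/J_\mu\cong\bigoplus_{\beta\cleq\mu}\P_\beta$ of graded $H_n(0)$-modules — equivalently, that this graded quotient is genuinely \emph{projective}, not merely a quotient of a projective module — which forces one to control precisely how the Demazure operators act on descent monomials (modulo $J_\mu$), i.e.\ to invoke the combinatorics developed in \cite{H}; once that is in place, the computation of $\mathbf{ch}_t$ is immediate and that of $\mathrm{Ch}_t$ reduces to the symmetric-function identity above.
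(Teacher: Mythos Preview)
The paper does not prove this theorem: it is stated with the attribution \cite{H} and no proof is given here, so there is no in-paper argument to compare your proposal against. The present paper only \emph{uses} this result as motivation and moves on to the Stanley--Reisner setting (Theorem~\ref{thm:NHL}), remarking afterwards that the proof there is actually simpler than the one in \cite{H} precisely because the Demazure operators act on descent monomials in $\FF[\cB_n]$ by $0$ or $\pm1$ times another descent monomial, whereas in $\FF[\px]$ they produce genuine polynomials whose lower-order terms must be controlled.

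As for the proposal itself: your outline is reasonable and the invariance check for hooks is correct. Two places deserve more care. First, the ``only if'' direction: your claim that the only Tanisaki generators of degree $\le\ell(\mu)$ are $e_1,\dots,e_{\ell(\mu)}$ together with the $e_{\ell(\mu)}(S)$ for $|S|=n-1$ follows from $\mu'_{n-1}\ge2$ (two parts $\ge2$), but the assertion that $x_n\,e_{\ell(\mu)-1}(x_1,\dots,x_{n-2})$ is not in their span modulo $(e_1,\dots,e_{\ell(\mu)})$ is not ``already visible for $\mu=(2,2)$''; it requires a short linear-algebra computation (e.g.\ reducing the $e_{\ell(\mu)}(S)$ to $\pm x_m^{\ell(\mu)}$ modulo the $e_j$ and checking that a genuine cross-term survives). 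Second, and more substantively, you correctly identify the projectivity step $\FF[\px]/J_\mu\cong\bigoplus_{\beta\cleq\mu}\P_\beta$ as the crux, but your sentence about ``straightening corrections \dots\ involve more than $r$ distinct variables and so die modulo $J'_\mu$'' is exactly the content that needs proof --- it is not automatic from the decomposition of the full coinvariant algebra, because one must show that the $H_n(0)$-submodule one is quotienting by is a direct sum of some of the $\P_\beta$ rather than cutting across them. That is precisely the delicate leading-term analysis carried out in \cite{H}, and it is what the present paper circumvents by passing to $\FF[\cB_n]$.
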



Now we switch to the Stanley-Reisner ring $\FF[\cB_n]$ and provide a complete representation theoretic interpretation for $\widetilde{\mathbf{H}}_\alpha(\mathbf x;t)$ and $\tNH_\alpha(\bx;q,t)$. Recall that $\underline t^S$ means the product of $t_i$ for all elements $i$ in a multiset $S\subseteq[n-1]$, with repetitions included.

\begin{theorem}\label{thm:NHL}
Let $\alpha$ be a composition of $n$, and let $I_\alpha$ be the ideal of $\FF[\cB_n]$ generated by \[
\Theta_\alpha:=\{\theta_i:i\in D(\alpha)\cup\{n\}\} \quad \textrm{and}\quad \{ y_A\subseteq [n]: |A|\notin D(\alpha) \cup \{n\}\}.
\]
Then one has an isomorphism 
$
\FF[\cB_n]/I_\alpha \cong \FF[\cB_\alpha]/(\Theta_\alpha)
$
of multigraded $\FF$-algebras, $\SS_n$-modules, and $H_n(0)$-modules. In addition, the multigraded noncommutative characteristic of $\FF[\cB_n]/I_\alpha$ equals
\[
\tNH_\alpha(\bx;t_1,\ldots,t_{n-1}):= \sum_{\beta\cleq\alpha} \underline t^{D(\beta)} \bs_\beta  \quad {\rm inside} \quad \NSym[t_1,\ldots,t_{n-1}].
\]
\end{theorem}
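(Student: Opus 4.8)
The plan is to prove Theorem~\ref{thm:NHL} in three stages: first establish the algebra/module isomorphism $\FF[\cB_n]/I_\alpha \cong \FF[\cB_\alpha]/(\Theta_\alpha)$, then identify the $H_n(0)$-module structure of the latter, and finally compute its multigraded noncommutative characteristic. For the first stage I would observe that the generators $\{y_A : |A| \notin D(\alpha)\cup\{n\}\}$ of $I_\alpha$ are exactly the generators of the kernel of the projection $\phi : \FF[\cB_n] \twoheadrightarrow \FF[\cB_\alpha]$ from \S\ref{sec:rank}, which is a map of multigraded algebras, $\SS_n$-modules, and (by the remark that the $H_n(0)$-action preserves the multigrading and hence restricts compatibly) of $H_n(0)$-modules. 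Modding out additionally by $\Theta_\alpha$ on both sides then gives the claimed isomorphism, using that $\phi$ carries $\theta_i$ to $\theta_i$ for $i \in D(\alpha)\cup\{n\}$ and kills the others. The one point needing care is that $\Theta_\alpha$ as a subset of $\FF[\cB_n]$ versus as a subset of $\FF[\cB_\alpha]$ generate corresponding ideals under $\phi$; this follows because $\phi$ is surjective and $\Theta$-behaviour is controlled.

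For the second stage, I would use that $\FF[\cB_\alpha]$ is a free $\FF[\Theta_\alpha]$-module on the descent basis $\{Y_w : w \in \SS^\alpha\}$ (the rank-selected analogue of Garsia's theorem, stated at the end of \S\ref{sec:rank}), and that the $H_n(0)$-action is $\Theta$-linear (Proposition~\ref{prop:linear}), hence $\Theta_\alpha$-linear, so it descends to the coinvariant quotient $\FF[\cB_\alpha]/(\Theta_\alpha)$, which has $\FF$-basis the images of $Y_w$ for $w \in \SS^\alpha$. The key structural claim is that this quotient is isomorphic as an $H_n(0)$-module to $H_n(0)\pi_{w_0(\alpha^c)}$ (or the appropriate cyclic projective module $\P_\alpha$ up to the radical/top bookkeeping that makes it actually projective). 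I would prove this by constructing an explicit $H_n(0)$-module map: the top-degree piece of $\FF[\cB_\alpha]/(\Theta_\alpha)$ should be spanned by the image of $Y_{w_1(\alpha)}$ (or a suitable leading element), and one checks the $\pib_i$-action on the descent basis matches the known action on $\{\pib_w\pi_{w_0(\alpha^c)}\}$ via the rules \eqref{def:H0Boolean} together with the translation \eqref{eq:equiv} relating $p_i(\MM)$ to $D(\sigma(\MM)^{-1})$ and $D(\sigma(\MM))$. Concretely, the $\alpha$-homogeneous structure together with \eqref{eq:equiv} should show each homogeneous component of $\FF[\cB_\alpha]/(\Theta_\alpha)$ is the module $\mathbf{N}_\alpha$ of Lemma~\ref{lem:Na}, whose composition factors and projectivity are already understood.

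For the third stage, given that $\FF[\cB_n]/I_\alpha$ is projective, its multigraded noncommutative characteristic is determined by expressing it as a direct sum of $\P_\beta$'s weighted by the multigrading. Since the $H_n(0)$-action is $\Theta_\alpha$-linear and $\FF[\cB_\alpha]$ is free over $\FF[\Theta_\alpha]$, the multigraded characteristic of $\FF[\cB_\alpha]$ factors as $\mathbf{ch}$ of the coinvariant quotient times $\prod_{i \in D(\alpha)\cup\{n\}}(1-t_i)^{-1}$; so it suffices to compute $\mathbf{ch}$ of $\FF[\cB_\alpha]/(\Theta_\alpha) \cong \FF[\cB_n]/I_\alpha$. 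Using the descent-basis indexing by $w \in \SS^\alpha$, each basis element $Y_w$ sits in multidegree $\underline t^{D(w)}$, and the cyclic-module description gives $\mathbf{ch}$ as a sum over the relevant $w$ of $\underline t^{D(w)} \bs_{?}$; regrouping by the composition $\beta$ with $D(\beta) = D(w)$ and summing over $w$ in each descent class (an interval $[w_0(\beta), w_1(\beta)]$, with the $\pib$-action on this interval producing exactly $\bs_\beta$) yields $\sum_{\beta \cleq \alpha} \underline t^{D(\beta)} \bs_\beta$. I expect the main obstacle to be the second stage: carefully verifying, via the three-way case analysis in \eqref{eq:equiv}, that the $\pib_i$-action on the descent basis of $\FF[\cB_\alpha]/(\Theta_\alpha)$ genuinely realizes the projective module structure (rather than merely having the right composition factors), since getting from "right composition series" to "isomorphic as modules, and projective" is exactly where the analogy with $\FF[\px]$ broke down in \cite{H}. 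This likely requires Lemma~\ref{lem:Na} to do the heavy lifting, identifying each homogeneous component precisely with $H_n(0)\pi_{w_0(\alpha^c)}$ and invoking Norton's structure theory.
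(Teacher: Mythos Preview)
Your three-stage outline is essentially the paper's proof, and stages 1 and 3 are correct as written. The confusion is in stage 2, where you oscillate between two incompatible identifications.

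First, the claim that the whole quotient $\FF[\cB_\alpha]/(\Theta_\alpha)$ is isomorphic to $H_n(0)\pi_{w_0(\alpha^c)}$ via the obvious map $Y_w \mapsto \pib_w\pi_{w_0(\alpha^c)}$ is false. Take $n=3$, $\alpha=(1,2)$, so $D(\alpha)=\{1\}$ and $w_0(\alpha^c)=s_2$. Then $Y_{123}=1$ lies in degree $\underline t^{\,\emptyset}$ and satisfies $\pib_1(1)=0$ (since $p_1=p_2$ for the empty chain), whereas $\pib_1\cdot\pi_{s_2}=\pib_{s_1}\pi_{s_2}\ne 0$ in $H_n(0)\pi_{s_2}$. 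The two modules are abstractly isomorphic (both decompose as $\bigoplus_{\beta\cleq\alpha}\P_\beta$), but the basis-matching map is not equivariant, and in any case this ungraded identification does not yield the multigraded characteristic.

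Second, invoking Lemma~\ref{lem:Na} does not help: that lemma computes the homogeneous components of $\FF[\cB_n]$ \emph{before} quotienting by $\Theta_\alpha$, and those components are the larger cyclic modules $H_n(0)\pi_{w_0(\gamma^c)}$, not projective indecomposables. After quotienting, the $\beta$-homogeneous component $Q_\beta$ of $\FF[\cB_\alpha]/(\Theta_\alpha)$ has basis $\{Y_w: D(w)=D(\beta)\}$, which is strictly smaller.

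The paper's fix---which is exactly what you describe in stage 3---is to bypass Lemma~\ref{lem:Na} entirely and check directly that $Q_\beta\cong\P_\beta$ via $Y_w\mapsto \pib_w\pi_{w_0(\beta^c)}$ (note $\beta^c$, not $\alpha^c$), using the three-case analysis from \eqref{eq:equiv} with $\alpha(\MM)=\beta$. This is a short computation and immediately gives both projectivity and the multigraded noncommutative characteristic; there is no need for the detour through the big cyclic module or for Norton's structure theory beyond the definition of $\P_\beta$. Your anticipated ``main obstacle'' dissolves once you target $\P_\beta$ rather than $H_n(0)\pi_{w_0(\alpha^c)}$.
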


\begin{proof}
In \S\ref{sec:rank} we defined a projection $\phi: \FF[\cB_n] \twoheadrightarrow \FF[\cB_\alpha]$ which induces an isomorphism 
\[
\FF[\cB_n]/( A\subseteq [n]: |A|\notin D(\alpha) \cup\{n\} ) \cong \FF[\cB_\alpha]
\] 
of multigraded $\FF$-algebras and $\SS_n$-modules. By definition, the $H_n(0)$-action also preserves $\phi$. Since the actions of $\SS_n$ and $H_n(0)$ are both $\Theta_\alpha$-linear, we can take a further quotient by the ideal generated by $\Theta_\alpha$ and obtain the desired isomorphism $\FF[\cB_n]/I_\alpha \cong \FF[\cB_\alpha]/(\Theta_\alpha)$ of multigraded $\FF$-algebras, $\SS_n$-modules, and $H_n(0)$-modules.

Since $\FF[\cB_\alpha]/(\Theta_\alpha)$ has an $\FF$-basis of the descent monomials $Y_w$ for all $w\in\SS^\alpha$, it equals the direct sum of $Q_\beta$, the $\FF$-span of $\{Y_w:D(w)=D(\beta)\}$, for all $\beta\cleq\alpha$; each $Q_\beta$ has homogeneous multigrading $\underline t^{D(\beta)}$. The projective indecomposable $H_n(0)$-module $\P_\beta=H_n(0)\pib_{w_0(\beta)}\pi_{w_0(\beta^c)}$ has an $\FF$-basis 
\[
\left\{ \pib_w\pi_{w_0(\beta^c)}: D(w)=D(\beta) \right\}.
\] 
Thus one has an vector space isomorphism $Q_\beta \cong \P_\beta$ via $Y_w \mapsto \pib_w\pi_{w_0(\beta^c)}$. We wan to show that this isomorphism is $H_n(0)$-equivariant. Let $i\in[n-1]$ be arbitrary. Suppose that $D(w)=D(\beta)$, and let $\MM$ be the chain of the sets $\{w(1),\ldots,w(j)\}$ for all $j\in D(w)$. Then $\alpha(\MM)=\beta$ and $\sigma(\MM)=w$. We distinguish three cases below and use (\ref{eq:equiv}).

If $p_i(\MM)>p_{i+1}(\MM)$, i.e.  $i\in D(w^{-1})$, then $\pib_i(Y_w) = - Y_w$ and $\pib_i \pib_w\pi_{w_0(\beta^c)} = - \pib_w\pi_{w_0(\beta^c)}$.

If $p_i(\MM)=p_{i+1}(\MM)$, i.e. $i\notin D(w^{-1})$ and $D(s_iw)\not\subseteq D(\beta)$, then $\pib_i(Y_w) = 0$ and there exists $j\in D(s_iw)\setminus D(\beta)$ such that $\pib_i \pib_w\pi_{w_0(\beta^c)} = \pib_w \pib_j \pi_{w_0(\beta^ac)} =0$ since $\pib_j\pi_j=0$.

If $p_i(\MM)<p_{i+1}(\MM)$, i.e. $i\notin D(w^{-1})$ and $D(s_iw) \subseteq D(\beta)$, then $\pib_i(Y_w) = y_{s_iw}$ and $\pib_i \pib_w\pi_{w_0(\beta^c)} = \pib_{s_iw} \pi_{w_0(\beta^c)}$.

Therefore $Q_\beta\cong \P_\beta$ is an isomorphism of $H_n(0)$-modules for all $\beta\cleq\alpha$. It follows that the multigraded noncommutative characteristic of $\FF[\cB_n]/I_\alpha \cong \FF[\cB_\alpha]/(\Theta_\alpha)$ is $\tNH_\alpha(\bx;t_1,\ldots,t_{n-1})$.
\end{proof}

It is easy to see $\widetilde{\mathbf{H}}_\alpha(\mathbf x;t) = \tNH_\alpha(\mathbf{x};t,t^2,\ldots,t^{n-1})$. Thus Theorem~\ref{thm:NHL} provides a representation theoretic interpretation of $\widetilde{\mathbf{H}}_\alpha(\mathbf x;t)$ for all compositions $\alpha$, and can be viewed as a noncommutative analogue of Theorem~\ref{thm:Springer}.

\begin{remark}
The proof of Theorem~\ref{thm:NHL} is actually simpler than the proof of our partial interpretation for $\tNH_\alpha(\mathbf x;t)$ in \cite{H}. This is because $\overline\pi_i$ sends a descent monomial in $\FF[\cB_n]$ to either $0$ or $\pm1$ times a descent monomial, but sends a descent monomial in $\FF[\px]$ to a polynomial in general (whose leading term is still a descent monomial). We view the Stanley-Reisner ring $\FF[\cB_n]$ (or $\FF[\cB_n^*]\cong\FF[\cB_n]/(\emptyset)$) as a $0$-analogue of the polynomial ring $\FF[\px]$. For an odd (i.e. $q=-1$) analogue, see Lauda and Russell~\cite{LaudaRussell}.
\end{remark}

\begin{remark}
When $\alpha=(1^k,n-k)$ is a hook, one can check that the ideal $I_{1^k,n-k}$ of $\FF[\cB_n]$ has generators $\theta_1,\ldots,\theta_k$ and all $y_{\ds A}$ with $A\subseteq [n]$ and $|A|\notin[k]$. One can also check that the images of these generators under the transfer map $\tau$ are the Tanisaki generators for the ideal $J_{1^k,n-k}$ of $\FF[\px]$, although $\tau(I_{1^k,n-k})\ne J_{1^k,n-k}$.
\end{remark}

\begin{remark}
One sees that the coinvariant algebra $\FF[\cB_n]/(\Theta)$ carries the regular representation of $H_n(0)$, as its multigraded noncommutative characteristic equals 
\[
\tNH_{1^n}(\bx;t_1,\ldots,t_{n-1}):= \sum_{\beta\models n} \underline t^{D(\beta)} \bs_\beta.
\]
If we take $t_i=t^i$ for all $i\in D(\alpha)$, and $t_i=q^{n-i}$ for all $i\in D(\alpha^c)$ in $\tNH_{1^n}(\mathbf{x};t_1,\ldots,t_{n-1})$, then we obtain the $(q,t)$-analogue $\tNH_\alpha(\bx;q,t)$. 

Hivert, Lascoux, and Thibon~\cite{HivertLascouxThibon} defined a family of noncommutative symmetric functions on multiple parameters $q_i$ and $t_i$, which are similar to but different from the family $\left\{ \tNH_\alpha(\bx;q,t) \right\}$. A common generalization of these two families of noncommutative symmetric functions is discovered recently by Lascoux, Novelli, and Thibon~\cite{LascouxNovelliThibon}, that is, a family  $\left\{ P_\alpha \right\}$ of noncommutative symmetric functions having parameters associated with paths in binary trees.

In fact, one recovers $P_\alpha$ from $\tNH_{1^n}(\bx;t_1,\ldots,t_{n-1})$, the noncommutative characteristic of the coinvariant algebra $\FF[\cB_n]/(\Theta)$. For any composition $\alpha$ of $n$, let $u(\alpha)=u_1\cdots u_{n-1}$ be the Boolean word such that $u_i=1$ if $i\in D(\alpha)$ and $u_i=0$ otherwise. Let $y_{u_{1\ldots i}}$ be a parameter indexed by the Boolean word $u_1\cdots u_i$. It follows from the definition of $P_\alpha$~\cite[(31)]{LascouxNovelliThibon} that
\[
P_\alpha = \sum_{\beta\models n} \left( \prod_{i\in D(\beta)} y_{u_{1\ldots i}} \right) \bs_\beta.
\]
Then taking $t_i = y_{u_{1\ldots i}}$ one has $\tNH_{1^n}(\bx;y_{u_{1\ldots 1}},\ldots,y_{u_{1\ldots n-1}}) = P_\alpha$. For example, when $\alpha=211$ one has $u(\alpha)=011$ and 
$
\tNH_{1111}(\bx;y_0,y_{01},y_{011}) = \bs_4 + y_{011}\bs_{31} + y_{01}\bs_{22} + y_{01}y_{011}\bs_{211} + y_0\bs_{13} + y_0y_{011}\bs_{121} + y_0y_{01}\bs_{112} + y_0y_{01}y_{011}\bs_{1111} = P_{211}.
$
\end{remark}

\vskip5pt
The multigraded noncommutative characteristic $\tNH_\alpha(\bx;t_1,\ldots,t_{n-1})$, where $\alpha\models n$, is the modified version of 
\[
\NH_\alpha = \NH_\alpha(\bx;t_1,\ldots,t_{n-1}):=\sum_{\beta\cleq\alpha} \underline t^{D(\alpha)\setminus D(\beta)} \bs_\beta 
\]
which belongs to $\NSym[t_1,\ldots,t_{n-1}]$. We show below that these functions satisfy similar properties to those given in \cite{BZ} for $\NH_\alpha(\bx;t)$; taking $t_i =  t^i$ for all $i\in[n-1]$ one recovers the corresponding results in~\cite{BZ}.

It is easy to see $\NH_\alpha(0,\ldots,0) = \bs_\alpha$ and $\NH_\alpha(1,\ldots,1) = \bh_\alpha$. Let $\NSym_n$ be the $n$-th homogeneous component of $\NSym$, which has bases $\{\bs_\alpha:\alpha\models n\}$ and $\{\bh_\alpha:\alpha\models n\}$. Then $\{\NH_\alpha:\alpha\models n\}$ gives a basis for $\NSym_n[t_1,\ldots,t_{n-1}]$, since $\NH_\alpha$ has leading term $\bs_\alpha$ under the partial order $\cleq$ for compositions of $n$. It follows that $\bigsqcup_{n\geq0}\{\NH_\alpha: \alpha\models n\}$ is a basis for $\NSym[t_1,t_2,\ldots]$.

Bergeron and Zabrocki~\cite{BZ} defined an inner product on $\NSym$ such that the basis $\{\bs_\alpha\}$ is ``semi-self'' dual, namely $\langle \bs_\alpha,\bs_\beta \rangle := (-1)^{|\alpha|+\ell(\alpha)} \delta_{\alpha,\,\beta^c}$ where $\delta$ is the Kronecker delta. They showed that the same result holds for $\{\bh_\alpha\}$ and $\{\NH_\alpha(\bx;t)\}$. Now one has a multivariate version.

\begin{proposition}
One has $\langle \NH_\alpha, \NH_\beta \rangle = (-1)^{|\alpha|+\ell(\alpha)} \delta_{\alpha,\,\beta^c}$ for any pair of compositions $\alpha$ and $\beta$.
\end{proposition}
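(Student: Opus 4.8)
The plan is to expand both $\NH_\alpha$ and $\NH_\beta$ in the ribbon basis and read off the pairing term by term, the $t_i$ being treated as scalars (so the pairing on $\NSym[t_1,\dots,t_{n-1}]$ is the obvious $\ZZ[t_1,\dots,t_{n-1}]$-bilinear extension of the Bergeron--Zabrocki pairing on $\NSym$). First I would dispose of the case $|\alpha|\ne|\beta|$: then every $\langle\bs_\gamma,\bs_\delta\rangle$ occurring in the expansion is $0$, since it is nonzero only for $\gamma=\delta^c$, which forces $|\gamma|=|\delta|$, and $\delta_{\alpha,\beta^c}=0$ as well; so both sides vanish. Hence I may assume $|\alpha|=|\beta|=n$, and expand
\[
\langle\NH_\alpha,\NH_\beta\rangle=\sum_{\gamma\cleq\alpha}\ \sum_{\delta\cleq\beta}\ \underline t^{D(\alpha)\setminus D(\gamma)}\,\underline t^{D(\beta)\setminus D(\delta)}\,\langle\bs_\gamma,\bs_\delta\rangle .
\]
Using $\langle\bs_\gamma,\bs_\delta\rangle=(-1)^{n+\ell(\gamma)}\delta_{\gamma,\,\delta^c}$ kills every term except those with $\delta=\gamma^c$, leaving a sum over a single composition $\gamma$.

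The next step is a change of indexing variable to $U:=D(\gamma)\subseteq[n-1]$. With $\delta=\gamma^c$ one has $D(\delta)=[n-1]\setminus U$, so the two constraints $\gamma\cleq\alpha$ and $\delta\cleq\beta$ together say exactly $D(\beta^c)\subseteq U\subseteq D(\alpha)$. A short check then shows that the product of the two monomial weights, namely $\underline t^{D(\alpha)\setminus U}\cdot\underline t^{D(\beta)\cap U}$, equals $\underline t^{D(\alpha)\cap D(\beta)}$ regardless of $U$: the sets $D(\alpha)\setminus U$ and $D(\beta)\cap U$ are disjoint, and their union is $D(\alpha)\cap D(\beta)$ precisely because of the chain $D(\beta^c)\subseteq U\subseteq D(\alpha)$ just established. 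Since also $\ell(\gamma)=|U|+1$, the whole pairing becomes
\[
\langle\NH_\alpha,\NH_\beta\rangle=\underline t^{D(\alpha)\cap D(\beta)}\sum_{D(\beta^c)\subseteq U\subseteq D(\alpha)}(-1)^{n+|U|+1}.
\]

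Finally I would evaluate the alternating sum. If $D(\beta^c)\not\subseteq D(\alpha)$ the index set is empty and the pairing is $0$, consistent with $\delta_{\alpha,\beta^c}=0$. Otherwise write $U=D(\beta^c)\sqcup V$ with $V$ running over the subsets of the ``free'' set $D(\alpha)\cap D(\beta)$; the sum factors as a fixed sign times $\sum_{V\subseteq D(\alpha)\cap D(\beta)}(-1)^{|V|}$, which vanishes unless $D(\alpha)\cap D(\beta)=\emptyset$. Combining $D(\beta^c)\subseteq D(\alpha)$ with $D(\alpha)\cap D(\beta)=\emptyset$ gives $D(\alpha)=[n-1]\setminus D(\beta)=D(\beta^c)$, i.e.\ $\alpha=\beta^c$; and in that case $U=D(\alpha)$ is the only index, the monomial weight is $\underline t^{\emptyset}=1$, and the sign is $(-1)^{n+|D(\alpha)|+1}=(-1)^{n+\ell(\alpha)}=(-1)^{|\alpha|+\ell(\alpha)}$. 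This yields $\langle\NH_\alpha,\NH_\beta\rangle=(-1)^{|\alpha|+\ell(\alpha)}\delta_{\alpha,\beta^c}$. I do not expect any genuine difficulty here; the one place to be careful is the complement bookkeeping in the middle step --- establishing the chain $D(\beta^c)\subseteq D(\gamma)\subseteq D(\alpha)$ and verifying that the combined $t$-weight collapses to the $U$-independent set $D(\alpha)\cap D(\beta)$ --- after which everything reduces to the elementary fact that $\sum_{V\subseteq W}(-1)^{|V|}$ vanishes unless $W=\emptyset$.
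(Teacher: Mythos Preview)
Your argument is correct and is essentially the paper's own proof, just organized a bit more uniformly: both expand in the ribbon basis, use $\langle\bs_\gamma,\bs_\delta\rangle=(-1)^{n+\ell(\gamma)}\delta_{\gamma,\delta^c}$ to reduce to a single index, observe that the combined $t$-weight is the $U$-independent monomial $\underline t^{D(\alpha)\cap D(\beta)}$, and finish with the alternating sum $\sum_{V\subseteq D(\alpha)\cap D(\beta)}(-1)^{|V|}$. The paper phrases the casework as ``$D(\alpha)\cup D(\beta)\ne[n-1]$'' versus ``$D(\alpha)\cap D(\beta)\ne\emptyset$'' and indexes by $E=D(\alpha)\setminus D(\alpha')$, whereas you index by $U=D(\gamma)$ and let the chain $D(\beta^c)\subseteq U\subseteq D(\alpha)$ absorb the cases; these are the same computation.
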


\begin{proof}
By definition, one has
\[
\langle \NH_\alpha, \NH_\beta \rangle = \sum_{\alpha'\cleq\alpha} \underline t^{D(\alpha)\setminus D(\alpha')}  \sum_{\beta'\cleq\beta}  \underline t^{D(\beta)\setminus D(\beta')} \langle \bs_{\alpha'}, \bs_{\beta'} \rangle.
\]
If $|\alpha|\ne|\beta|$ then $\langle \bs_{\alpha'}, \bs_{\beta'} \rangle = 0$ for all $\alpha'\cleq \alpha$ and $\beta'\cleq \beta$. Assume $|\alpha|=|\beta|=n$ below. 

If $D(\alpha)\cup D(\beta) \ne [n-1]$ then again one has$\langle \bs_{\alpha'}, \bs_{\beta'} \rangle = 0$ for all $\alpha'\cleq \alpha$ and $\beta'\cleq \beta$. If $\alpha=\beta^c$ then the right hand side contains only one nonzero term $\langle \bs_{\alpha}, \bs_\beta \rangle$. If $D(\alpha)\cap D(\beta)\ne\emptyset$ then taking $E=D(\alpha)\setminus D(\alpha')$ we write the right hand side as
\[
\sum_{ E\subseteq D(\alpha)\cap D(\beta)} (-1)^{n+\ell(\alpha)-|E|} \,\underline t^{D(\alpha)\cap D(\beta)} =0.
\]
This completes the proof.
\end{proof}

We also give a product formula for $\{\NH_\alpha\}$, generalizing the product formula for $\{ \NH_\alpha(\bx;t)\}$ given by Bergeron and Zabrocki~\cite{BZ}.

\begin{proposition} 
For any compositions $\alpha$ and $\beta$, one has the product formula
\[
\NH_\alpha \cdot \NH_\beta = \sum_{\gamma\cleq\beta} \left(\prod_{i\in D(\beta)\setminus D(\gamma)} (t_i-t_{|\alpha|+i}) \right) \left( \NH_{\alpha\gamma} + (1-t_{|\alpha|}) \NH_{\alpha\rhd\gamma} \right).
\]
\end{proposition}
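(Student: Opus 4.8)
The plan is to expand both sides of the identity in the basis $\{\bs_\delta:\delta\models n+m\}$ of $\NSym_{n+m}[t_1,\ldots,t_{n+m-1}]$, where $n:=|\alpha|$ and $m:=|\beta|$, and to match coefficients composition by composition. On the left, substituting the definitions of $\NH_\alpha$ and $\NH_\beta$ and applying $\bs_{\alpha'}\bs_{\beta'}=\bs_{\alpha'\beta'}+\bs_{\alpha'\rhd\beta'}$ gives
\[
\NH_\alpha\cdot\NH_\beta=\sum_{\alpha'\cleq\alpha}\ \sum_{\beta'\cleq\beta}\underline t^{(D(\alpha)\setminus D(\alpha'))\,\sqcup\,(D(\beta)\setminus D(\beta'))}\bigl(\bs_{\alpha'\beta'}+\bs_{\alpha'\rhd\beta'}\bigr).
\]
The key observation is that $n\in D(\alpha'\beta')$ always, while $n\notin D(\alpha'\rhd\beta')$ always; so for a fixed $\delta\models n+m$ exactly one of the two types of terms can produce $\bs_\delta$, according to whether $n\in D(\delta)$. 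Writing $A:=D(\delta)\cap[n-1]$ and $B:=\{j-n:j\in D(\delta),\ j>n\}$, the pair $(\alpha',\beta')$ with $\alpha'\beta'=\delta$ (resp.\ $\alpha'\rhd\beta'=\delta$) is then uniquely determined: $\alpha'$ has descent set $A$ and $\beta'$ has descent set $B$. Such a pair contributes only when $A\subseteq D(\alpha)$ and $B\subseteq D(\beta)$, so the coefficient of $\bs_\delta$ on the left is $\underline t^{D(\alpha)\setminus A}\,\underline t^{D(\beta)\setminus B}$ if $A\subseteq D(\alpha)$ and $B\subseteq D(\beta)$, and $0$ otherwise.

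Next I would compute the coefficient of $\bs_\delta$ on the right. Expanding $\NH_{\alpha\gamma}=\sum_{\delta'\cleq\alpha\gamma}\underline t^{D(\alpha\gamma)\setminus D(\delta')}\bs_{\delta'}$ and similarly $\NH_{\alpha\rhd\gamma}$, and using $D(\alpha\gamma)=D(\alpha)\cup\{n\}\cup(n+D(\gamma))$ and $D(\alpha\rhd\gamma)=D(\alpha)\cup(n+D(\gamma))$, one sees that (for $\gamma\cleq\beta$) $\bs_\delta$ occurs in $\NH_{\alpha\gamma}$ iff $A\subseteq D(\alpha)$ and $B\subseteq D(\gamma)$, with coefficient $\underline t^{D(\alpha)\setminus A}\prod_{j\in D(\gamma)\setminus B}t_{n+j}$ carrying an extra factor $t_n$ precisely when $n\notin D(\delta)$ (because then $n$ is a descent of $\alpha\gamma$ not present in $\delta$); and $\bs_\delta$ occurs in $\NH_{\alpha\rhd\gamma}$ only when $n\notin D(\delta)$, with coefficient $\underline t^{D(\alpha)\setminus A}\prod_{j\in D(\gamma)\setminus B}t_{n+j}$. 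In the case $n\notin D(\delta)$ the prefactor $1-t_{|\alpha|}=1-t_n$ on $\NH_{\alpha\rhd\gamma}$ combines with this $t_n$ as $t_n+(1-t_n)=1$, so in both cases the coefficient of $\bs_\delta$ on the right is
\[
\underline t^{D(\alpha)\setminus A}\sum_{\gamma:\,B\subseteq D(\gamma)\subseteq D(\beta)}\Bigl(\prod_{i\in D(\beta)\setminus D(\gamma)}(t_i-t_{n+i})\Bigr)\prod_{j\in D(\gamma)\setminus B}t_{n+j},
\]
provided $A\subseteq D(\alpha)$ (and $0$ otherwise, matching the left side).

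It then remains to evaluate the inner sum. Reindexing by $E:=D(\beta)\setminus D(\gamma)$, which ranges over all subsets of $D(\beta)\setminus B$, turns it into $\sum_{E\subseteq D(\beta)\setminus B}\prod_{i\in E}(t_i-t_{n+i})\prod_{j\in(D(\beta)\setminus B)\setminus E}t_{n+j}$, which factors as $\prod_{i\in D(\beta)\setminus B}\bigl((t_i-t_{n+i})+t_{n+i}\bigr)=\underline t^{D(\beta)\setminus B}$. Hence the coefficient of $\bs_\delta$ on the right equals $\underline t^{D(\alpha)\setminus A}\,\underline t^{D(\beta)\setminus B}$, agreeing with the left side for every $\delta\models n+m$, and the identity follows; specializing $t_i=t^i$ recovers the product formula of Bergeron and Zabrocki. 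The bulk of the work is the bookkeeping with descent sets; the one place to be careful — and the main potential pitfall — is the shift by $n$ (turned-off descents inherited from $\gamma$ must contribute $t_{n+j}$, not $t_j$) together with the $t_n$ versus $1-t_n$ cancellation that fuses the $n\in D(\delta)$ and $n\notin D(\delta)$ cases into one formula.
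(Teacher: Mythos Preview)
Your proof is correct and follows essentially the same approach as the paper's: both reduce the identity to the Boolean telescoping
\[
\sum_{E\subseteq D(\beta)\setminus D(\gamma')}\ \prod_{i\in E}(t_i-t_{n+i})\prod_{j\in (D(\beta)\setminus D(\gamma'))\setminus E} t_{n+j}\;=\;\underline t^{\,D(\beta)\setminus D(\gamma')},
\]
after unwinding the descent-set bookkeeping. The only organizational difference is that the paper first packages $\NH_{\alpha\gamma}+(1-t_{|\alpha|})\NH_{\alpha\rhd\gamma}$ as $\sum_{\alpha'\cleq\alpha,\ \gamma'\cleq\gamma}\underline t^{\,D(\alpha\gamma)\setminus D(\alpha'\gamma')}\,\bs_{\alpha'}\bs_{\gamma'}$ and then swaps the order of summation over $\gamma$ and $\gamma'$, whereas you extract the coefficient of each $\bs_\delta$ directly and handle the $n\in D(\delta)$ versus $n\notin D(\delta)$ dichotomy explicitly via the $t_n+(1-t_n)=1$ cancellation; these are two presentations of the same computation.
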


\begin{proof}
Let $\gamma\cleq\beta$. If $\delta\cleq\alpha\gamma$ then there exists a unique pair of compositions $\alpha'\cleq\alpha$ and $\gamma'\cleq\gamma$ such that $\delta=\alpha'\gamma'$ or $\delta=\alpha'\rhd\gamma'$. If $\delta\cleq\alpha\rhd\gamma$ then there exists a unique pair of compositions $\alpha'\cleq\alpha$ and $\gamma'\cleq\gamma$ such that $\delta=\alpha'\rhd\gamma'$. Thus
\[
\NH_{\alpha\gamma} + (1-t_{|\alpha|}) \NH_{\alpha\rhd\gamma} 
= \sum_{ \substack{ \alpha'\cleq\alpha \\ \gamma'\cleq\gamma }} 
\underline t^{D(\alpha\gamma)\setminus D(\alpha'\gamma')} \bs_{\alpha'\gamma'}
+\underline t^{D(\alpha\gamma)\setminus D(\alpha'\rhd\gamma')} \bs_{\alpha'\rhd\gamma'} 
+(1-t_{|\alpha|}) \,\underline t^{D(\alpha\rhd\gamma)\setminus D(\alpha'\rhd\gamma')} \bs_{\alpha'\rhd\gamma'}.
\]
Since $D(\alpha\gamma) = D(\alpha\rhd\gamma)\sqcup \{|\alpha|\}$ and $D(\alpha\gamma)\setminus D(\alpha'\gamma') = D(\alpha\rhd\gamma)\setminus D(\alpha'\rhd\gamma')$, it follows that
\[
\NH_{\alpha\gamma} + (1-t_{|\alpha|}) \NH_{\alpha\rhd\gamma}  =  
\sum_{ \substack{\alpha'\cleq\alpha \\ \gamma'\cleq\gamma }} \underline t^{D(\alpha\gamma)\setminus D(\alpha'\gamma')} (\bs_{\alpha'\gamma'}+\bs_{\alpha'\rhd\gamma'}).
\]
Note that $\bs_{\alpha'\gamma'}+\bs_{\alpha'\rhd\gamma'} = \bs_{\alpha'}\bs_{\gamma'}$, and $D(\alpha\gamma)\setminus D(\alpha'\gamma') = (D(\alpha)\setminus D(\alpha') ) \sqcup \{\,|\alpha|+i:i\in D(\gamma)\setminus D(\gamma')\}$. Thus the right hand side of the product formula equals
\[
\sum_{ \substack{\alpha'\cleq\alpha \\\gamma'\cleq\beta }} \underline t^{ D(\alpha)\setminus D(\alpha')}  \bs_{\alpha'}\bs_{\gamma'} \sum_{\gamma'\cleq\gamma\cleq\beta} \left(\prod_{i\in D(\beta)\setminus D(\gamma)} (t_i-t_{|\alpha|+i}) \right)  \,\underline t^{|\alpha|+D(\gamma)\setminus D(\gamma')}
\]
where $\underline t^{|\alpha|+S}:=\prod_{i\in S} t_{|\alpha|+i}$. Since the interval $[\gamma',\beta]$ is isomorphic to the Boolean algebra of the subsets of $D(\beta)\setminus D(\gamma')$, one sees that
\[
\sum_{\gamma'\cleq\gamma\cleq\beta} \left(\prod_{i\in D(\beta)\setminus D(\gamma)} (t_i-t_{|\alpha|+i}) \right) \,\underline t^{ |\alpha|+D(\gamma)\setminus D(\gamma')} = \underline t^{D(\beta)\setminus D(\gamma')}.
\]
Therefore the right-hand side of the product formula is equal to
\[
\sum_{\alpha'\cleq\alpha} \underline t^{ D(\alpha)\setminus D(\alpha')} \bs_{\alpha'} \sum_{\gamma'\cleq\beta} \underline t^{D(\beta)\setminus D(\gamma')} \bs_{\gamma'} = \NH_\alpha\cdot \NH_\beta.
\]
The proof is complete.
\end{proof}

In particular, the above proposition implies that $\NH_\alpha(\bx;t_1,\ldots,t_{n-1}) \NH_\beta(\bx;t|n) = \NH_{\alpha \beta} (t|n)$, where $n=|\alpha|$ and $t|n:=\{t_1,\ldots,t_{n-1},1,t_1,\ldots,t_{n-1},1,\ldots\}$, i.e. $t_1,t_2,\ldots$ are $n$-periodic. This allows us to recover the following result of Bergeron and Zabrocki~\cite{BZ}: if $\zeta$ is an $n$-th root of unity then $\NH_\alpha(\bx;\zeta)\NH_\beta(\bx;\zeta)=\NH_{\alpha\beta}(\bx;\zeta)$.


\subsection{Quasisymmetric characteristic}
In this subsection we use the two encodings given in \S\ref{sec:multichains} for the multichains in $\cB_n$ to study the quasisymmetric characteristic of the Stanley-Reisner ring $\FF[\cB_n]$.

\begin{lemma}\label{lem:Na}
Let $\alpha$ be a weak composition of $n$. Then the $\alpha$-homogeneous component $\FF[\cB_n]_\alpha$ of the Stanley-Reisner ring $\FF[\cB_n]$ is an $H_n(0)$-submodule of $\FF[\cB_n]$ with homogeneous multigrading $\underline t^{D(\alpha)}$ and isomorphic to the cyclic module $H_n(0)\pi_{w_0(\alpha^c)}$, where $\alpha^c$ is the composition of $n$ with descent set $[n-1]\setminus D(\alpha)$.
\end{lemma}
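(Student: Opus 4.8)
The plan is to exhibit an explicit $\FF$-basis of $\FF[\cB_n]_\alpha$, show it is stable under the operators $\pib_1,\ldots,\pib_{n-1}$ (so that $\FF[\cB_n]_\alpha$ is indeed an $H_n(0)$-submodule), and then match this action with the known action on $H_n(0)\pi_{w_0(\alpha^c)}$. Write $k:=\ell(\alpha)-1$, so that the homogeneous component is indexed by a multichain length $k$. By \S\ref{sec:multichains}, $\FF[\cB_n]_\alpha$ has $\FF$-basis $\{y_\MM : \alpha(\MM)=\alpha\}$, and the bijection $\MM\mapsto\sigma(\MM)$ identifies this basis with $\{y_\MM : \sigma(\MM)\in\SS^\alpha\}$, i.e. the basis is indexed by $\SS^\alpha$; all these monomials carry the same multigrading $\underline t^{D(\alpha)}$ since $r(\MM)=D(\alpha(\MM))=D(\alpha)$, which gives the homogeneous-multigrading claim immediately.

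Next I would analyze the $\pib_i$-action on this basis using the trichotomy~(\ref{def:H0Boolean}) together with the equivalences~(\ref{eq:equiv}) translating the relative order of $p_i(\MM),p_{i+1}(\MM)$ into conditions on $w:=\sigma(\MM)\in\SS^\alpha$:
\begin{itemize}
\item if $i\in D(w^{-1})$, then $\pib_i(y_\MM)=-y_\MM$;
\item if $i\notin D(w^{-1})$ and $D(s_iw)\not\subseteq D(\alpha)$, then $\pib_i(y_\MM)=0$;
\item if $i\notin D(w^{-1})$ and $D(s_iw)\subseteq D(\alpha)$ (so $s_iw\in\SS^\alpha$), then $\pib_i(y_\MM)=y_{s_i\MM}$, and here $\sigma(s_i\MM)=s_iw$ with $\ell(s_iw)=\ell(w)+1$.
\end{itemize}
So $\FF[\cB_n]_\alpha$ is closed under each $\pib_i$, hence an $H_n(0)$-submodule. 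Now compare with $N:=H_n(0)\pi_{w_0(\alpha^c)}$. By the discussion in \S\ref{sec:H0} (the basis of $\P_\alpha$ and the remark that $\pi_{w_0(\alpha^c)}=\sum_{u\in\SS_{\alpha}}\overline\pi_u$, together with the fact that $w_0(\alpha^c)$ is the longest element of $\SS_{\alpha}$), the module $N$ has $\FF$-basis $\{\pib_w\pi_{w_0(\alpha^c)} : w\in\SS^\alpha\}$, and the left action of $\pib_i$ on this basis obeys exactly the same trichotomy: $\pib_i\pib_w\pi_{w_0(\alpha^c)}$ equals $-\pib_w\pi_{w_0(\alpha^c)}$ if $\ell(s_iw)<\ell(w)$, equals $\pib_{s_iw}\pi_{w_0(\alpha^c)}$ if $\ell(s_iw)>\ell(w)$ and $s_iw\in\SS^\alpha$, and equals $0$ otherwise (using $\pib_i^2=-\pib_i$ in the first case, and $\pib_j\pi_j=0$ for some $j\in D(s_iw)\setminus D(\alpha)$ in the last, just as in the proof of Theorem~\ref{thm:NHL}). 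Therefore the $\FF$-linear map $y_\MM\mapsto\pib_{\sigma(\MM)}\pi_{w_0(\alpha^c)}$ is an $H_n(0)$-module isomorphism $\FF[\cB_n]_\alpha\xrightarrow{\sim}N$.

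The only genuinely delicate point is making sure the three conditions on $w=\sigma(\MM)$ are exactly parallel on the two sides, in particular that ``$i\notin D(w^{-1})$ and $D(s_iw)\subseteq D(\alpha)$'' is equivalent to ``$\ell(s_iw)>\ell(w)$ and $s_iw\in\SS^\alpha$'', and that in the middle case one really can produce $j\in D(s_iw)\setminus D(\alpha)$ with $\pib_j$ factoring through to annihilate $\pi_{w_0(\alpha^c)}$; this is a standard parabolic-coset-representative computation (note $i\notin D(w^{-1})\iff w^{-1}(i)<w^{-1}(i+1)\iff\ell(s_iw)>\ell(w)$) but it must be stated carefully. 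Everything else is the routine verification of (\ref{def:H0Boolean}) against (\ref{eq:equiv}), which the excerpt has already set up. As a byproduct this lemma re-proves that $\pib_1,\ldots,\pib_{n-1}$ really do define an $H_n(0)$-action preserving the multigrading, as promised after the definition~(\ref{def:H0Boolean}).
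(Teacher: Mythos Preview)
Your proposal is correct and follows essentially the same approach as the paper's own proof: both exhibit the basis $\{\pib_w\pi_{w_0(\alpha^c)}:w\in\SS^\alpha\}$ of $H_n(0)\pi_{w_0(\alpha^c)}$, write down the same trichotomy for the $\pib_i$-action on it, and then use (\ref{eq:equiv}) to match this with the $\pib_i$-action on $\{y_\MM:\alpha(\MM)=\alpha\}$ via $y_\MM\mapsto\pib_{\sigma(\MM)}\pi_{w_0(\alpha^c)}$. One small inaccuracy: the basis of $\P_\alpha$ in \S\ref{sec:H0} is indexed by the descent class $[w_0(\alpha),w_1(\alpha)]$, not by all of $\SS^\alpha$, so your parenthetical reference to it does not directly yield the basis of $N$; like the paper, you should simply assert (or verify) that $\{\pib_w\pi_{w_0(\alpha^c)}:w\in\SS^\alpha\}$ is a basis of $N$.
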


\begin{proof}
It is not hard to check that $H_n(0)\pi_{w_0(\alpha^c)}$ has an $\FF$-basis 
$
\{\pib_w\pi_{w_0(\alpha^c)}: w\in\SS^\alpha\}.
$
For any $w\in\SS^\alpha$, the $H_n(0)$-action is given by
\[
\pib_i\pib_w\pi_{w_0(\alpha^c)} = 
\begin{cases}
-\pib_w\pi_{w_0(\alpha^c)} & {\rm if}\ i\in D(w^{-1}),\\
0, & {\rm if}\ i\notin D(w^{-1}),\ s_iw\notin\SS^\alpha, \\
\pib_{s_iw}\pi_{w_0(\alpha^c)}, & {\rm if}\ i\notin D(w^{-1}),\ s_iw\in\SS^\alpha.
\end{cases}
\]

On the other hand, if $\MM$ is a multichain of $\cB_n$ with $\alpha(\MM)=\alpha$, then $r(\MM)=D(\alpha)$ and $\sigma(\MM)\in \SS^\alpha$. It follows from (\ref{eq:equiv}) that $\FF[\cB_n]_\alpha\cong H_n(0)\pi_{w_0(\alpha^c)}$ via $y_\MM\mapsto \pib_{\sigma(\MM)}\pi_{w_0(\alpha^c)}$.
\end{proof}

Thus we get an $\NN\times\NN^{n+1}$-multigraded quasisymmetric characteristic for each homogeneous component $\FF[\cB_n]_\alpha$:
\begin{equation}\label{eq:Na}
\mathrm{Ch}_{q,\underline t} (\FF[\cB_n]_\alpha) = \sum_{w\in\SS^\alpha} q^{\inv(w)} \underline t^{D(\alpha)} F_{D(w^{-1})}.
\end{equation}
This defines an $\NN\times\NN^{n+1}$-multigraded quasisymmetric characteristic for the Stanley-Reisner ring $\FF[\cB_n]$.

\begin{theorem}\label{thm:Ch}
The $\NN\times\NN^{n+1}$-multigraded quasisymmetric characteristic $\mathrm{Ch}_{q,\underline t}(\FF[\cB_n])$ of $\FF[\cB_n]$ equals
\[
\sum_{k\geq0} \sum_{\alpha\in\Com(n,k+1)} \underline t^{ D(\alpha)} \sum_{w\in\SS^\alpha} q^{\inv(w)} F_{D(w^{-1})} 
= \sum_{w\in\SS_n} \frac{ q^{\inv(w)} \underline t^{D(w)} F_{D(w^{-1})} }
{\prod_{0\leq i\leq n}(1-t_i)} 
= \sum_{k\geq0} \sum_{\pp\in[k+1]^n} t_{p'_1} \cdots t_{p'_k} q^{\inv(\pp)} F_{D(\pp)}. 
\]
\end{theorem}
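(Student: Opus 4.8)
The plan is to establish the two equalities of the theorem in turn, working from the first expression rightward.

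\emph{The first expression equals $\mathrm{Ch}_{q,\underline t}(\FF[\cB_n])$.} This is essentially a bookkeeping restatement of Lemma~\ref{lem:Na} and~(\ref{eq:Na}). As an $H_n(0)$-module, $\FF[\cB_n]$ is the direct sum of its homogeneous components $\FF[\cB_n]_\alpha$ over all weak compositions $\alpha$ of $n$; under the bijection $\alpha\mapsto D(\alpha)$ between weak compositions of $n$ and finite multisets with entries in $\{0,\dots,n\}$, organizing by the length $k+1$ of $\alpha$ lets $\alpha$ run over $\Com(n,k+1)$ for $k\geq0$, each multichain of $\cB_n$ being counted exactly once. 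By Lemma~\ref{lem:Na}, $\FF[\cB_n]_\alpha$ is a cyclic submodule of homogeneous $\underline t$-degree $\underline t^{D(\alpha)}$ isomorphic to $H_n(0)\pi_{w_0(\alpha^c)}$, whose length-graded quasisymmetric characteristic is $\sum_{w\in\SS^\alpha}q^{\inv(w)}F_{D(w^{-1})}$; thus its $\NN\times\NN^{n+1}$-graded characteristic is the summand appearing in~(\ref{eq:Na}). Summing over $\alpha$ and using additivity of the characteristic on direct sums gives the first expression.

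\emph{Second equality.} Interchange the order of summation so that $w\in\SS_n$ becomes the outer index. For fixed $w$ the factor $q^{\inv(w)}F_{D(w^{-1})}$ is multiplied by $\sum\underline t^{D(\alpha)}$, the sum taken over all weak compositions $\alpha$ of $n$, of every length, with $w\in\SS^\alpha$, i.e.\ with $D(w)$ contained in the support of the multiset $D(\alpha)$. Writing such an $\alpha$ as its descent multiset $S$ and decomposing $S$ uniquely as $D(w)$ (each element with multiplicity one) disjointly united with an arbitrary finite multiset $S'$ on $\{0,\dots,n\}$, we obtain $\underline t^{S}=\underline t^{D(w)}\,\underline t^{S'}$ and $\sum_{S'}\underline t^{S'}=\prod_{0\leq i\leq n}(1-t_i)^{-1}$, a product of geometric series. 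Hence the coefficient of $q^{\inv(w)}F_{D(w^{-1})}$ is $\underline t^{D(w)}\big/\prod_{0\leq i\leq n}(1-t_i)$, which is the middle expression.

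\emph{Third equality.} Fix $k\geq0$ and pass to the two encodings of the length-$k$ multichains of $\cB_n$ from \S\ref{sec:multichains}: the map $\MM\mapsto(\alpha(\MM),\sigma(\MM))$ is a bijection onto the pairs $(\alpha,w)$ with $\alpha\in\Com(n,k+1)$ and $w\in\SS^\alpha$, and $\MM\mapsto p(\MM)$ is a bijection onto $[k+1]^n$. Under these identifications $D(\alpha(\MM))=r(\MM)=\{p'_1,\dots,p'_k\}$, so $\underline t^{D(\alpha(\MM))}=t_{p'_1}\cdots t_{p'_k}$; one has $\inv(\sigma(\MM))=\inv(p(\MM))=\inv(\pp)$; and $D(\sigma(\MM)^{-1})=D(\pp)$ by~(\ref{eq:equiv}). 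Therefore, letting $\MM$ run over the length-$k$ multichains of $\cB_n$, the $k$-th summand of the first expression satisfies
\[
\sum_{\alpha\in\Com(n,k+1)}\underline t^{D(\alpha)}\sum_{w\in\SS^\alpha}q^{\inv(w)}F_{D(w^{-1})}
= \sum_{\MM}\underline t^{D(\alpha(\MM))}q^{\inv(\sigma(\MM))}F_{D(\sigma(\MM)^{-1})}
= \sum_{\pp\in[k+1]^n}t_{p'_1}\cdots t_{p'_k}q^{\inv(\pp)}F_{D(\pp)},
\]
and summing over $k$ produces the third expression.

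None of these steps is genuinely difficult, since the representation-theoretic content is already packaged in Lemma~\ref{lem:Na} and~(\ref{eq:Na}) and the combinatorial content in~(\ref{eq:equiv}) and the bijections of \S\ref{sec:multichains}. The point demanding the most care is the second equality, where one must keep straight the distinction between the multiset $D(\alpha)$ and its support in the definition of $\SS^\alpha$, and use that $0$ and $n$ may appear in $D(\alpha)$ but never in $D(w)\subseteq[n-1]$, so that exactly the $n+1$ geometric factors in $\prod_{0\leq i\leq n}(1-t_i)^{-1}$ survive.
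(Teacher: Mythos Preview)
Your proof is correct. For the first and third expressions it matches the paper's argument essentially verbatim, invoking Lemma~\ref{lem:Na}/(\ref{eq:Na}) and the bijections of \S\ref{sec:multichains} together with~(\ref{eq:equiv}).

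For the middle equality you take a genuinely different route. The paper argues structurally: it uses that $\FF[\cB_n]$ is a free $\FF[\Theta]$-module on the descent basis $\{Y_w\}$ and that the $H_n(0)$-action is $\Theta$-linear (Proposition~\ref{prop:linear}), then checks that $\theta_0^{a_0}\cdots\theta_n^{a_n}Y_w$ sits at level $q^{\inv(w)}$ in the length filtration of its homogeneous component via a leading-term computation. This factors $\mathrm{Ch}_{q,\underline t}(\FF[\cB_n])$ as $\mathrm{Hilb}(\FF[\Theta];\underline t)$ times the characteristic of the coinvariants. You instead treat the first expression as a formal power series, interchange the summation order, and evaluate $\sum_{D(\alpha)\supseteq D(w)}\underline t^{D(\alpha)}$ by stripping off one copy of each $i\in D(w)$ and summing the residual multiset freely over $\{0,\dots,n\}$. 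Your argument is more elementary and self-contained---it needs neither the free $\FF[\Theta]$-module structure nor $\Theta$-linearity nor the leading-term analysis---while the paper's argument explains \emph{why} the product form holds, namely as a reflection of the tensor decomposition $\FF[\cB_n]\cong\FF[\Theta]\otimes_\FF\FF[\cB_n]/(\Theta)$ as graded $H_n(0)$-modules.
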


\begin{proof}
The first expression of $\mathrm{Ch}_{q,\underline t}(\FF[\cB_n])$ follows immediately from (\ref{eq:Na}). 

To see the second expression, recall that $\FF[\cB_n]$ is a free $\FF[\Theta]$-module on the descent basis $\{Y_w:w\in\SS_n\}$, and the $H_n(0)$-action on $\FF[\cB_n]$ is $\FF [\Theta]$-linear. If $a_0,\ldots,a_n$ are nonnegative integers and $\MM$ is a multichain in $\cB_n$, then one sees that $\theta_0^{a_0}\cdots\theta_n^{a_n} y_\MM $ is the sum of $y_{\MM'}$ for all multichains $\MM'$ refining $\MM$ and having rank multiset $r(\MM')=r(\MM)\cup\{0^{a_0},\ldots,n^{a_n}\}$. Thus for any $w\in\SS_n$, the element $\theta_0^{a_0}\cdots\theta_n^{a_n} Y_w$ has leading term
\[
\prod_{i\in D(w)\cup\{0^{a_0},\ldots,n^{a_n}\}} y_{ \{w(1),\ldots,w(i)\} }.
\] 
It follows that $\theta_0^{a_0}\cdots\theta_n^{a_n} Y_w$ has length-grading $q^{\inv(w)}$. Then one has
\begin{eqnarray*}
\mathrm{Ch}_{q,\underline t} \left(\FF[\cB_n] \right)
&=& {\rm Hilb} \left(\FF [\Theta];\underline t \right) \sum_{w\in\SS_n} q^{\inv(w)} \underline t^{ D(w)} F_{D(w^{-1})}  \\
&=& \sum_{w\in\SS_n} \frac{ q^{\inv(w)} \underline t^{D(w)} F_{D(w^{-1})}  }
{\prod_{0\leq i\leq n}(1-t_i)}.
\end{eqnarray*}

Finally we encode a multichain $\MM$ of length $k$ in $\cB_n$ by $p(\MM)=\pp\in[k+1]^n$. The $H_n(0)$-action in terms of this encoding is equivalent to the first one via (\ref{eq:equiv}). One has $D(\alpha(\MM))$ equals the multiset of $p'_1,\ldots,p'_k$ and $\inv(\sigma(\MM))=\inv(p(\MM))$. Hence we get the third expression of $\mathrm{Ch}_{q,\underline{t}}(\FF[\cB_n])$.
\end{proof}

\subsection{Applications to permutation statistics}\label{sec:Ch}

Theorem~\ref{thm:Ch} specializes to the result of Garsia and Gessel~\cite[Theorem~2.2]{GarsiaGessel} on the multivariate generating function of the permutation statistics $\inv(w)$, $\maj(w)$, $\des(w)$, $\maj(w^{-1})$, and $\des(w^{-1})$ for all $w\in \SS_n$. To explain this, we first recall that
\[
F_\alpha = \sum_{\substack{i_1\geq\cdots\geq i_n\geq1\\ i\in D(\alpha)\Rightarrow i_j> i_{j+1}}} x_{i_1}\cdots x_{i_n},\quad \forall \alpha\models n.
\]
Given a nonnegative integer $\ell$, let $\ps_{q;\ell}$ be the linear transformation from formal power series in $x_1,x_2,\ldots$ to formal power series in $q$, defined by $\ps_{q;\ell}(x_i)=q^{i-1}$ for $i=1,\ldots,\ell$, and $\ps_{q;\ell}(x_i)=0$ for all $i>\ell$; similarly, $\ps_{q;\infty}$ is defined by $\ps_{q;\infty}(x_i)=q^{i-1}$ for all $i=1,2,\ldots$. It is well known (see Stanley~\cite[Lemma~7.19.10]{EC2}) that 
\[
\ps_{q;\infty} (F_\alpha) = \frac{ q^{\maj(\alpha)}} {(1-q)\cdots(1-q^n)}.
\]
Let $(u;q)_n:=(1-u)(1-qu)(1-q^2u)\cdots(1-q^nu)$. It is also not hard to check (see Gessel and Reutenauer~\cite[Lemma~5.2]{GesselReutenauer}) that 
\[
\sum_{\ell\geq0}u^\ell\ps_{q;\ell+1}(F_\alpha) = \frac{q^{\maj(\alpha)} u^{\des(\alpha)} } {(u;q)_n}.
\]

A \emph{bipartite partition} is a pair of weak compositions $\lambda=(\lambda_1,\ldots,\lambda_n)$ and $\mu=(\mu_1,\ldots,\mu_n)$ satisfying the conditions $\lambda_1\geq\cdots\geq\lambda_n$ and $\lambda_i=\lambda_{i+1} \Rightarrow \mu_i\geq \mu_{i+1}$ (so the pairs of nonnegative integers $(\lambda_1,\mu_1),\ldots,(\lambda_n,\mu_n)$ are lexicographically ordered). Let $B(\ell,k)$ be the set of bipartite partitions $(\lambda,\mu)$ such that $\max(\lambda)\leq\ell$ and $\max(\mu)\leq k$, where $\max(\mu):=\max\{\mu_1,\ldots,\mu_n\}$ and similarly for $\max(\lambda)$.

\begin{corollary}[Garsia and Gessel~\cite{GarsiaGessel}]
\[
\frac{ \sum_{w\in\SS_n} q_0^{\inv(w)} q_1^{\maj(w^{-1})} u_1^{\des(w^{-1})} q_2^{\maj(w)} u_2^{\des(w)} } {(u_1;q_1)_n (u_2;q_2)_n }
= \sum_{\ell,k\geq0} u_1^\ell u_2^k \sum_{ (\lambda,\mu)\in B(\ell,k) } q_0^{\inv(\mu)} q_1^{|\lambda|} q_2^{|\mu|}.
\]
\end{corollary}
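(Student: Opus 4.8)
The plan is to obtain the identity by applying a single linear operator to the equality of closed forms for $\mathrm{Ch}_{q,\underline t}(\FF[\cB_n])$ provided by Theorem~\ref{thm:Ch}. Concretely, I would apply $\sum_{\ell\ge0}u_1^\ell\,\ps_{q_1;\ell+1}$ (acting on the quasisymmetric variables) together with the specialization $t_i=q_2^i u_2$ for $0\le i\le n$, and rename $q$ as $q_0$. Since the middle and the third expressions of Theorem~\ref{thm:Ch} are equal, the two outputs are equal; the middle expression will produce the left-hand side of the corollary and the third (word) expression its right-hand side, so the corollary follows.

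For the middle expression $\sum_{w\in\SS_n}q^{\inv(w)}\underline t^{D(w)}F_{D(w^{-1})}\big/\prod_{0\le i\le n}(1-t_i)$, I would first record that under $t_i=q_2^i u_2$ one has $\underline t^{D(w)}=\prod_{i\in D(w)}q_2^i u_2=q_2^{\maj(w)}u_2^{\des(w)}$ and $\prod_{0\le i\le n}(1-t_i)=(u_2;q_2)_n$. Writing $F_{D(w^{-1})}=F_\gamma$ for the composition $\gamma$ of $n$ with $D(\gamma)=D(w^{-1})$, so that $\maj(\gamma)=\maj(w^{-1})$ and $\des(\gamma)=\des(w^{-1})$, the Gessel--Reutenauer evaluation $\sum_{\ell\ge0}u^\ell\ps_{q;\ell+1}(F_\gamma)=q^{\maj(\gamma)}u^{\des(\gamma)}/(u;q)_n$ quoted above yields $\sum_{\ell\ge0}u_1^\ell\ps_{q_1;\ell+1}(F_{D(w^{-1})})=q_1^{\maj(w^{-1})}u_1^{\des(w^{-1})}/(u_1;q_1)_n$. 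Multiplying these factors and renaming $q$ as $q_0$ gives exactly the left-hand side of the corollary. This step is routine.

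For the third expression $\sum_{k\ge0}\sum_{\pp\in[k+1]^n}t_{p'_1}\cdots t_{p'_k}\,q^{\inv(\pp)}F_{D(\pp)}$, the specialization turns $t_{p'_1}\cdots t_{p'_k}$ into $q_2^{p'_1+\cdots+p'_k}u_2^{k}$, and $p'_1+\cdots+p'_k=|A_1|+\cdots+|A_k|$ is the total box count in the diagram of \S\ref{sec:multichains}, so by (\ref{eq:box}) it equals $n(k+1)-(p_1+\cdots+p_n)$. Expanding $F_{D(\pp)}=\sum_{i_1\ge\cdots\ge i_n\ge1,\ j\in D(\pp)\Rightarrow i_j>i_{j+1}}x_{i_1}\cdots x_{i_n}$ and applying $\sum_{\ell\ge0}u_1^\ell\ps_{q_1;\ell+1}$ replaces $F_{D(\pp)}$, via the substitution $\lambda_j:=i_j-1$, by a generating function in $u_1$ and $q_1$ over weakly decreasing sequences $\lambda=(\lambda_1\ge\cdots\ge\lambda_n\ge0)$ with $\lambda_1\le\ell$ that strictly decrease across every descent of $\pp$, i.e.\ with $\lambda_j=\lambda_{j+1}\Rightarrow p_j\le p_{j+1}$. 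I would then invoke the word/bipartite-partition correspondence of Garsia and Gessel~\cite{GarsiaGessel}: a word $\pp\in[k+1]^n$ together with such a compatible $\lambda$ corresponds to a bipartite partition $(\lambda,\mu)$ with $\max\mu\le k$, in such a way that the compatibility condition becomes the defining condition $\lambda_j=\lambda_{j+1}\Rightarrow\mu_j\ge\mu_{j+1}$ of $B(\ell,k)$, that $p'_1+\cdots+p'_k=|\mu|$, and that $\inv(\pp)=\inv(\mu)$. Summing $u_1^\ell$ over $\ell\ge\max\lambda$, then $u_2^k$ over $k\ge\max\mu$, and finally over all bipartite partitions $(\lambda,\mu)$, one reassembles $\sum_{\ell,k\ge0}u_1^\ell u_2^k\sum_{(\lambda,\mu)\in B(\ell,k)}q_0^{\inv(\mu)}q_1^{|\lambda|}q_2^{|\mu|}$, which is the right-hand side of the corollary.

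The main obstacle I expect is precisely the last bookkeeping step: the reindexing $\pp\leftrightarrow\mu$ must be chosen so that the descent-compatibility constraint, the size statistic ($p'_1+\cdots+p'_k$ versus $|\mu|$), and---most delicately---the inversion statistic ($\inv(\pp)$ versus $\inv(\mu)$, where a naive complementation would only match the complementary ``coinversion'' count) are matched simultaneously, and one must check that summing over the alphabet size $k$ (which changes the individual $p'_i$) and over $\ell$ assembles the geometric factors into the double sum over $B(\ell,k)$. This is in effect the bijective heart of the argument of Garsia and Gessel~\cite{GarsiaGessel}; the new content here is only that the word-sum on the right of Theorem~\ref{thm:Ch} arises a priori as the $\NN\times\NN^{n+1}$-multigraded quasisymmetric characteristic of $\FF[\cB_n]$. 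Everything else is mechanical.
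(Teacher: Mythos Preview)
Your approach is precisely that of the paper: apply $\sum_{\ell\ge0}u_1^\ell\,\ps_{q_1;\ell+1}$ together with the specialization $t_i=q_2^iu_2$ to the second and third expressions in Theorem~\ref{thm:Ch}, and then pass from words $\pp\in[k+1]^n$ to bipartite partitions via the substitution $\mu_i=k+1-p_i$. The inversion bookkeeping you single out as the main obstacle is exactly the one point the paper also elides (it writes only that ``the result follows easily'' after giving this same substitution), so you have located the sole nontrivial verification rather than a divergence from the paper's argument.
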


\begin{proof}
Applying the linear transformation $\sum_{\ell\geq0}u_1^{\ell}\ps_{q_1;\ell+1}$ and the specialization $t_i=q_2^i u_2$ for $i=0,1,\ldots,n$ to the last two expressions of $\mathrm{Ch}_{q,\underline{t}}(\FF[\cB_n])$ in Theorem~\ref{thm:Ch}, we obtain
\[
\frac{ \sum_{w\in\SS_n} q_0^{\inv(w)} q_1^{\maj(w^{-1})} u_1^{\des(w^{-1})} q_2^{\maj(w)} u_2^{\des(w)} } {(u_1;q_1)_n (u_2;q_2)_n }
= \sum_{k\geq0} u_2^k \sum_{\pp\in[k+1]^n} q_2^{p'_1+\cdots p'_k} q_0^{\inv(\pp)} \sum_{\ell\geq0} u_1^\ell \sum_{\substack{\ell\geq \lambda_1\geq \cdots \geq \lambda_n\geq0 \\ j\in D(\pp)\Rightarrow \lambda_j>\lambda_{j+1}}} q_1^{\lambda_1+\cdots+\lambda_n} .
\]
Note that $\pp\in[k+1]^n$ if and only if $\mu:=(k+1-p_1,\ldots,k+1-p_n)$ is a weak composition with $\max(\mu)\leq k$, and one has $|\mu| = p'_1+\cdots+p'_k$ by the definition of $p'_i$. The condition $j\in D(\pp)\Rightarrow \lambda_j>\lambda_{j+1}$ is equivalent to $\lambda_i=\lambda_{i+1} \Rightarrow \mu_i\geq \mu_{i+1}$. Thus we can rewrite the right hand side as a sum over $(\lambda,\mu)\in B(\ell,k)$ for all $\ell,k\geq0$, and then the result follows easily.
\end{proof}

Taking $q_0=1$ in Theorem~\ref{thm:Ch} one has the usual $\NN^{n+1}$-multigraded quasisymmetric characteristic $\mathrm{Ch}_{\underline{t}}(\FF[\cB_n])$. Then applying the same specialization as in the proof of the above corollary, and using the observation 
\[
\sum_{(\lambda,\mu)\in B(\ell,k)} q_1^{|\lambda|} q_2^{|\mu|} = \prod_{0\leq i\leq \ell} \prod_{0\leq j\leq k} \frac{1}{1-zq_1^iq_2^j} \,\Bigg|_{z^n}
\] 
where $f|_{z^n}$ is the coefficient of $z^n$ in $f$, one can get another result of Garsia and Gessel~\cite{GarsiaGessel}:
\[
\frac{ \sum_{w\in\SS_n} q_1^{\maj(w^{-1})} u_1^{\des(w^{-1})} q_2^{\maj(w)} u_2^{\des(w)} } {(u_1;q_1)_n (u_2;q_2)_n  }
= \sum_{\ell,k\geq0} u_1^\ell u_2^k  \prod_{0\leq i\leq \ell} \prod_{0\leq j\leq k} \frac{1}{1-zq_1^iq_2^j} \,\Bigg|_{z^n}.
\]


A further specialization of Theorem~\ref{thm:Ch} gives a well known result which is often attributed to Carlitz~\cite{Carlitz} but actually dates back to MacMahon~\cite[Volume 2, Chapter 4]{MacMahon}. 

\begin{corollary}[MacMahon-Carlitz]
Let $[k+1]_q:=1+q+q^2+\cdots+q^k$. Then
\[
\frac{\sum_{w\in\SS_n} q^{\maj(w)} u^{\des(w)} } {(u;q)_n} = \sum_{k\geq0}([k+1]_q)^n u^k.
\]
\end{corollary}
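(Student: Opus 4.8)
The plan is to obtain this as the ``dimension shadow'' of Theorem~\ref{thm:Ch}. Concretely, in the chain of identities of Theorem~\ref{thm:Ch} I would first set the length-grading parameter (the variable called $q$ there) equal to $1$, which discards the $\inv$ statistic and leaves the identity
\[
\sum_{w\in\SS_n}\frac{\underline t^{D(w)}\,F_{D(w^{-1})}}{\prod_{0\le i\le n}(1-t_i)}
= \sum_{k\ge0}\sum_{\pp\in[k+1]^n} t_{p'_1}\cdots t_{p'_k}\,F_{D(\pp)}
\]
in $\QSym[[t_0,\dots,t_n]]$. Next I would substitute $t_i=uq^i$ for $i=0,1,\dots,n$, now using $q,u$ for the two variables in the statement of the corollary. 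Under this substitution $\prod_{0\le i\le n}(1-t_i)=(u;q)_n$, the monomial $\underline t^{D(w)}$ becomes $u^{\des(w)}q^{\maj(w)}$, and $t_{p'_1}\cdots t_{p'_k}$ becomes $u^k q^{p'_1+\cdots+p'_k}$.

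The second step is to pass from quasisymmetric functions to ordinary generating functions by applying the linear functional $\QSym_n\to\ZZ$ sending every $F_\alpha$ with $\alpha\models n$ to $1$. This is well defined because $\{F_\alpha:\alpha\models n\}$ is a basis of $\QSym_n$; moreover, since every simple $H_n(0)$-module $\C_\alpha$ is one-dimensional, applying this functional to $\mathrm{Ch}(M)$ of a finite-dimensional $H_n(0)$-module $M$ returns $\dim_\FF M$, so the step simply replaces each multigraded component of $\FF[\cB_n]$ by its $\FF$-dimension. There is no convergence issue, as $F_{D(w^{-1})}$ and $F_{D(\pp)}$ are homogeneous of degree $n$ and the functional therefore acts coefficientwise in the $u$'s. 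The left-hand side becomes $\bigl(\sum_{w\in\SS_n}u^{\des(w)}q^{\maj(w)}\bigr)\big/(u;q)_n$, and the right-hand side becomes $\sum_{k\ge0}u^k\sum_{\pp\in[k+1]^n}q^{p'_1+\cdots+p'_k}$.

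Finally I would invoke equation~\eqref{eq:box}, which already records that $\sum_{\pp\in[k+1]^n}q^{p'_1+\cdots+p'_k}=(q^k+\cdots+q+1)^n=([k+1]_q)^n$; substituting this into the right-hand side produces exactly $\sum_{w\in\SS_n}q^{\maj(w)}u^{\des(w)}\big/(u;q)_n=\sum_{k\ge0}([k+1]_q)^n u^k$, which is the claimed identity. None of the steps is a genuine obstacle; the only point requiring care is the legitimacy of the functional $F_\alpha\mapsto1$, justified as above. An essentially equivalent route, which bypasses quasisymmetric functions, is to compute the bigraded Hilbert series of $\FF[\cB_n]$ in two ways: once from the bijection between length-$k$ multichains in $\cB_n$ and words in $[k+1]^n$ together with \eqref{eq:box}, giving $\sum_{k\ge0}u^k([k+1]_q)^n$; and once from Garsia's presentation of $\FF[\cB_n]$ as a free $\FF[\Theta]$-module on the descent monomials $\{Y_w:w\in\SS_n\}$, where $\theta_i$ contributes the factor $uq^i$ and $Y_w$ contributes $u^{\des(w)}q^{\maj(w)}$, giving $\bigl(\sum_{w\in\SS_n}u^{\des(w)}q^{\maj(w)}\bigr)\big/(u;q)_n$; equating the two yields the corollary.
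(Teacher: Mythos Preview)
Your proposal is correct and follows essentially the same route as the paper: set the length-grading variable to $1$, specialize $t_i=uq^i$, send each $F_\alpha$ to $1$, and apply equation~\eqref{eq:box}. Your added justification for the legitimacy of the functional $F_\alpha\mapsto 1$ and the alternative Hilbert-series argument are nice embellishments, but the core derivation coincides with the paper's.
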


\begin{proof}
Taking $q=1$, $t_i=q^iu$ for all $i=0,1,\ldots,n$, and $F_I=1$ for all $I\subseteq[n-1]$ in last two expressions of $\mathrm{Ch}_{q,\underline{t}}(\FF[\cB_n])$ in Theorem~\ref{thm:Ch}, we get
\[
\frac{\sum_{w\in\SS_n} q^{\maj(w)} u^{\des(w)} } {(u;q)_n} = \sum_{k\geq0} u^k \sum_{\pp\in[k+1]^n} q^{p'_1+\cdots+p'_k}.
\]
Then using Equation (\ref{eq:box}) we establish this corollary.
\end{proof}

Now we switch to the first two expressions of $\mathrm{Ch}_{q,\underline{t}}(\FF[\cB_n])$ and derive the following result from it, which was obtained by Adin, Brenti, and Roichman~\cite{AdinBrentiRoichman} from the Hilbert series of the coinvariant algebra $\FF[\px]/(\FF[\px]^{\SS_n}_+)$.

\begin{corollary}[Adin, Brenti, and Roichman~\cite{AdinBrentiRoichman}]
Denote by $\Par(n)$ the set of all weak partitions 
$\lambda=(\lambda_1,\ldots,\lambda_n)$ with
$\lambda_1\geq\cdots\geq\lambda_n\geq0$, and let 
$m(\lambda)=(m_0(\lambda),m_1(\lambda),\ldots)$, where
\[
m_j(\lambda):=\#\{1\leq i\leq n:\lambda_i=j\}.
\]
Then 
\[
\sum_{\lambda\in\Par(n)}{n\choose m(\lambda)}\prod_{i=1}^n q_i^{\lambda_i} =\frac{\sum_{w\in\SS_n}\prod_{i\in D(w)}q_1\cdots q_i}
{(1-q_1)(1-q_1q_2)\cdots(1-q_1\cdots q_n)}.
\]
\end{corollary}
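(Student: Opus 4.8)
The plan is to specialize the first two expressions for $\mathrm{Ch}_{q,\underline t}(\FF[\cB_n])$ in Theorem~\ref{thm:Ch}, just as in the proof of the MacMahon--Carlitz corollary. First I would set the length variable $q=1$ and replace every $F_I$ by $1$ (that is, apply the linear functional on the degree-$n$ part of $\QSym$ sending each basis element $F_\alpha$ to $1$); then I would substitute $t_0=0$ and $t_i=q_1\cdots q_i$ for $i=1,\ldots,n$. Since $D(w)\subseteq[n-1]$ never contains $0$, the factor $(1-t_0)$ in the denominator $\prod_{0\le i\le n}(1-t_i)$ becomes $1$ while the rest becomes $(1-q_1)(1-q_1q_2)\cdots(1-q_1\cdots q_n)$, and $\underline t^{D(w)}$ becomes $\prod_{i\in D(w)}q_1\cdots q_i$. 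Hence the middle expression of Theorem~\ref{thm:Ch} turns into the right-hand side of the corollary immediately.

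It then remains to check that the first expression, namely $\sum_{k\ge0}\sum_{\alpha\in\Com(n,k+1)}\underline t^{D(\alpha)}\,|\SS^\alpha|$ after the same substitutions, equals the left-hand side $\sum_{\lambda\in\Par(n)}{n\choose m(\lambda)}\prod_{i=1}^n q_i^{\lambda_i}$. The substitution $t_0=0$ kills every term with $0\in D(\alpha)$, i.e.\ every $\alpha$ with $\alpha_1=0$, leaving only weak compositions with $\alpha_1\ge1$, for which $|\SS^\alpha|=n!/(\alpha_1!\cdots\alpha_{k+1}!)$ is a multinomial coefficient. I would then build the bijection: such an $\alpha=(\alpha_1,\ldots,\alpha_{k+1})$ is recorded by the weakly increasing sequence of partial sums $(\alpha_1\le\alpha_1+\alpha_2\le\cdots\le\alpha_1+\cdots+\alpha_k)$ of entries in $[n]$, which read in decreasing order is the list of (positive) parts of the conjugate $\lambda'$ of a unique $\lambda\in\Par(n)$ with $\lambda_1=k$; conversely $\lambda\mapsto(\lambda_1,\lambda')$ recovers $(k,\alpha)$. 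Under this correspondence one verifies that the multiset $\{\alpha_1,\ldots,\alpha_{k+1}\}$ equals $\{m_0(\lambda),m_1(\lambda),\ldots\}$, so that $|\SS^\alpha|={n\choose m(\lambda)}$, and that $\underline t^{D(\alpha)}=\prod_j t_{\lambda'_j}=\prod_j\prod_{i\le\lambda'_j}q_i=\prod_{i=1}^n q_i^{\lambda_i}$, the last equality because $q_i$ is counted once for each part of $\lambda'$ that is at least $i$, and there are exactly $\lambda_i$ of them. Summing over all $(k,\alpha)$ gives the left-hand side, finishing the proof.

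The only genuine work is the combinatorial bookkeeping in the second paragraph: one must see that transposing partitions, together with the dictionary ``partial sums of $\alpha$ $\leftrightarrow$ parts of $\lambda'$'', simultaneously matches the monomial weights and the multinomial coefficients of the two sides. Everything else is a mechanical substitution into Theorem~\ref{thm:Ch}. (One may, if desired, phrase the bijection through the auxiliary partition $\mu=\lambda'$ whose parts all lie in $[n]$, which makes the role of the two substitutions more symmetric, but the content is unchanged.)
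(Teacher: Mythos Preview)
Your proposal is correct and follows essentially the same route as the paper: the paper also specializes the first two expressions of Theorem~\ref{thm:Ch}, restricts to weak compositions with $\alpha_1\ge1$ (you do this by setting $t_0=0$, the paper by invoking the subset $\Com_1(n,k+1)$ directly), takes $t_i=q_1\cdots q_i$ and $F_I=1$, and then matches the resulting sum with $\sum_{\lambda}{n\choose m(\lambda)}\prod q_i^{\lambda_i}$ via the bijection $\lambda\leftrightarrow\alpha=(m_k(\lambda),\ldots,m_0(\lambda))$, which is exactly your bijection phrased through $\lambda'$ instead of $m(\lambda)$.
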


\begin{proof}
Given an integer $k\geq0$, the multichains $\MM=(A_1\subseteq\cdots\subseteq A_k)$ with $A_1\ne\emptyset$ are in bijection with the pairs $(\alpha(\MM),\sigma(\MM))$ of $\alpha(\MM)\in\Com_1(n,k+1)$ and $\sigma(\MM)\in\SS^\alpha$, where
\[
\Com_1(n,k+1):=\{(\alpha_1,\ldots,\alpha_{k+1})\in\Com(n,k+1):\alpha_1\geq1\}.
\]
Hence the proof of Theorem~\ref{thm:Ch} implies
\[
\sum_{k\geq0} \sum_{\alpha\in\Com_1(n,k+1)} \underline t^{D(\alpha)} \sum_{w\in\SS^\alpha} F_{D(w^{-1})} =\frac{\sum_{w\in\SS_n} \underline t^{D(w)} F_{D(w^{-1})}  }{\prod_{1\leq i\leq n} (1-t_i)}.
\]
Taking $t_i=q_1\cdots q_i$ for $i=1,\ldots,n$, and $F_{D(w^{-1})}=1$ for all $w\in\SS_n$, we obtain
\[
\sum_{k\geq0} \sum_{\alpha\in\Com_1(n,k+1)} {n\choose\alpha} \prod_{i\in D(\alpha)}q_1\cdots q_i =\frac{\sum_{w\in\SS_n}\prod_{i\in D(w)}q_1\cdots q_i}{(1-q_1)\cdots(1-q_1\cdots q_n)}.
\]
Thus it remains to show
\[
\sum_{\lambda\in\Par(k,n)} {n\choose m(\lambda)}\prod_{i=1}^n q_i^{\lambda_i} = 
\sum_{\alpha\in\Com_1(n,k+1)} {n\choose\alpha} \prod_{j\in D(\alpha)}q_1\cdots\ q_j
\]
where $\Par(k,n):=\{(\lambda_1,\ldots,\lambda_n)\in\Par(n):\lambda_1=k\}$, for all $k\geq0$. This can be established by using the bijection $\lambda\mapsto\alpha(\lambda):=(m_k(\lambda),\ldots,m_0(\lambda))$ between $\Par(k,n)$ and $\Com_1(n,k+1)$. One sees that the multiset $D(\alpha(\lambda))$ is precisely the multiset of column lengths of the Young diagram of $\lambda$, and thus
\[
\lambda_i=\#\{j\in D(\alpha(\lambda)):j\geq i\},\quad\forall i\in[n].
\] 
This completes the proof.
\end{proof}

\section{Remarks and questions for future research}\label{sec:Remarks}

\subsection{Connection with the polynomial ring}\label{sec:transfer}
We give in \S\ref{sec:rank} an analogy via the transfer map $\tau$ between the rank-selected Stanley-Reisner ring $\FF[\cB_n^*]$ as a multigraded algebra and $\SS_n$-module and the polynomial ring $\FF[\px]$ as a graded algebra and $\SS_n$-module. With our $H_n(0)$-action on $\FF[\cB_n^*]$ and the usual $H_n(0)$-action on $\FF[\px]$, the transfer map $\tau$ is not an isomorphism of $H_n(0)$-modules: e.g. for $n=2$ one has 
\[
\pib_1(y_1^2) = y_2^2,\quad \pib_1(x_1^2)=x_2^2+x_1x_2\ne x_2^2 = \tau(y_2^2).
\]

However, there is still a similar analogy between the multigraded $H_n(0)$-module $\FF[\cB_n^*]$ and the graded $H_n(0)$-module $\FF[\px]$. In fact, Theorem~\ref{thm:NHL} and Theorem~\ref{thm:Ch} imply
\[
\mathbf{ch}_{\underline t}(\FF[\cB_n^*]) = \frac{ \sum_{\alpha\models n} \underline t^{D(\alpha)} \bs_\alpha } { \prod_{1\leq i\leq n} (1-t_i) },
\]
\[
\mathrm{Ch}_{q,\underline t}(\FF[\cB_n^*]) = \frac{  \sum_{w\in\SS_n} q^{\inv(w)} \underline t^{D(w)} F_{D(w^{-1})} } { \prod_{1\leq i\leq n} (1-t_i) }.
\]
They specialize to the graded noncommutative characteristic and bigraded quasisymmetric characteristic of $\FF[\px]$ via $t_i=t^i$ for $i=1,\ldots,n$, as it follows from our earlier work \cite{H} that
\[
\mathbf{ch}_t(\FF[\px]) = \frac{ \sum_{\alpha\models n} t^{\maj(\alpha)} \bs_\alpha } { \prod_{1\leq i\leq n} (1-t^i) },
\]
\[
\mathrm{Ch}_{q,t}(\FF[\px]) =  \frac{ \sum_{w\in\SS_n} q^{\inv(w)} t^{\maj(w)} F_{D(w^{-1})} } { \prod_{1\leq i\leq n} (1-t^i) }.
\]

This suggests an isomorphism $\FF[\cB_n^*]\cong \FF[\px]$ of graded $H_n(0)$-modules. To explicitly give such an isomorphism, we consider every $\alpha$-homogeneous component of $\FF[\cB_n^*]$, which has a basis 
$
\left\{ \pib_w(\prod_{i\in D(\alpha)} y_{\{1,\ldots,i\}}): w\in\SS^\alpha \right\}.
$
By sending this basis to 
$
\left\{ \pib_w (\prod_{i\in D(\alpha)} x_1\cdots x_i): w\in\SS^\alpha \right\}
$
which, by our work~\cite{H}, is triangularly related to 
$
\left\{ \prod_{i\in D(\alpha)} x_{w(1)}\cdots x_{w(i)}: w\in\SS^\alpha \right\},
$
one has the desired isomorphism.

\subsection{Hecke algebra action on the Stanley-Reisner ring of the Coxeter complex}\label{sec:HWSR}
Let 
\[
W:=\langle S:s_i^2=1,\ (s_is_js_i\cdots)_{m_{ij}} = (s_js_is_j\cdots)_{m_{ij}},1\leq i\ne j\leq d \rangle
\]
be a finite Coxeter group generated by $S=\{s_1,\ldots,s_d\}$, where $m_{ij}\in\{2,3,\ldots\}$ and $(aba\cdots)_m$ is an alternating product of $m$ terms. The \emph{Hecke algebra} $H_W(q)$ of $W$ is the $\FF(q)$-algebra generated by $T_1,\ldots,T_d$ with relations
\[
\left\{\begin{array}{ll}
(T_i+1)(T_i-q)=0, & 1\leq i\leq d,\\
(T_iT_jT_i\cdots)_{m_{ij}}=(T_jT_iT_j\cdots)_{m_{ij}}, & 1\leq i\ne
j\leq d.
\end{array}\right.
\]
The elements $T_w:=T_{i_1}\cdots T_{i_k}$ are well defined for all $w\in W$ with a reduced expression $w=s_{i_1}\cdots s_{i_k}$, and form an $\FF(q)$-basis for $H_W(q)$. Specializing $q=1$ one has the group algebra $\FF W$, with $s_i=T_i|_{q=0}$ for all $i\in [d]$. Specializing $q=0$ one has the \emph{$0$-Hecke algebra} $H_W(0)$ generated by $\pib_i:=T_i|_{q=0}$ for all $i\in[d]$. The elements $\pi_1,\ldots,\pi_d$ form another generating set for $H_W(0)$, where $\pi_i:=\pib_i+1$. The elements $\pib_w$ and $\pi_w$ are well defined for all $w\in W$, giving two bases $\{\pib_w:w\in W\}$ and $\{\pi_w:w\in W\}$ for $H_W(q)$. By Norton~\cite{Norton}, the $0$-Hecke algebra $H_W(0)$ has the same representation theory as $H_n(0)$; one only needs to replace compositions with subsets of $S$.

The symmetric group $\SS_n$ is the Coxeter group of type $A_{n-1}$. The Stanley-Reisner ring of $\cB_n$ is essentially the Stanley-Reisner ring of the Coxeter complex of $\SS_n$. We can generalize our action $H_n(0)$-action on $\FF[\cB_n]$ to an $H_W(q)$-action on the Stanley-Reisner ring $\FF(q)[\Delta(W)]$ of the Coxeter complex $\Delta(W)$ of $W$. One has similar results for this $H_W(q)$-action, from which one can recover most results in Section~\ref{sec:H0SR} by taking $W=\SS_n$ and $q=0$. It is somewhat technical to provide all the details, but we can at least give a sketch here.

The Coxeter complex $\Delta(W)$ of a finite Coxeter group $W$ is a simplicial complex whose faces are the parabolic cosets $wW_J$ for all $w\in W$ and $J\subseteq S$, ordered by reserve inclusion. The vertices of $\Delta(W)$ are the maximal parabolic cosets $wW_{i^c}$ for all $w\in W$ and $i\in[d]$, where $i^c:=S\setminus\{s_i\}$, and they are colored by $r(wW_{i^c})=i$ so that $\Delta(W)$ is balanced. A face $wW_J$ has vertices $wW_{i^c}$ for all $i\in J^c:=S\setminus J$, and thus has \emph{rank set} $r(wW_J)=J^c$. This defines a multigrading on the Stanley-Reisner ring $\FF[\Delta(W)]$. 

The $W$-action on its parabolic cosets induces a $W$-action on the Stanley-Reisner ring $\FF[\Delta(W)]$, preserving its multigrading. One can show that the invariant algebra of this $W$-action equals the polynomial algebra $\FF[\Theta]$ (c.f. Garsia and Stanton~\cite{GarsiaStanton}), and the $W$-action is $\Theta$-linear. Here $\Theta$ is the set of the rank polynomials 
\[
\theta_i=\sum_{w\in W^{i^c}} wW_{i^c},\quad i=1,\ldots,d.
\]

Let $J\subseteq S$. The rank-selected subcomplex $\Delta_J(W)$ consists of all faces whose rank set is contained in $J$. The Stanley-Reisner ring $\FF[\Delta_J(W)]$ inherits a multigrading and a $W$-action from $\FF[\Delta(W)]$. Let $\Theta_J:=\{\theta_j:s_j\in J\}$. Then a theorem of Kind and Kleinschmidt~\cite{KindKleinschmidt} implies that $\FF[\Delta_J(W)]$ is a free $\FF[\Theta_J]$-module with a basis of the descent monomials 
\[
wW_{D(w)^c} = \prod_{i\in D(w)} wW_{i^c}, \quad \forall w\in W^{J^c}.
\] 
The $W$-action on $\FF[\Delta_J(W)]$ is $\Theta_J$-linear, and thus reduces to the quotient algebra $\FF[\Delta_J(W)]/(\Theta_J)$. 

Now we sketch how we define an $H_W(q)$-action on $\FF(q)[\Delta(W)]$. For every $J\subseteq S$, the element $\sigma_J:=\sum_{w\in W_J} T_w$ generates the \emph{parabolic representation} $H_W(q)\sigma_J$ of $H_W(q)$, which has a natural $\FF$-basis $\{T_w \sigma_J: w\in W^J\}$ (see e.g. Mathas~\cite{Mathas}). If $m=y_{v_1}\cdots y_{v_k}$ is a nonzero monomial in $\FF(q)[\Delta(W)]$, then the set of the vertices $v_1,\ldots,v_k$ is a face of $\Delta(W)$, i.e. a coset $wW_J$ where $J\subseteq S$ and $w\in W^J$. We then let $H_W(q)$ act on $m$ in the same way as it acts on $T_w\sigma_J$, i.e. we define
\[
T_i(m):=
\begin{cases}
(q-1)m+qs_i(m), & {\rm if}\ i\in D(w^{-1}),\\
qm, & {\rm if}\ i\notin D(w^{-1}),\ s_iw\notin W^J,\\
s_i(m), & {\rm if}\ i\notin D(w^{-1}),\ s_iw\in W^J.
\end{cases}
\]
We show that this gives an $H_W(q)$-action on $\FF(q)[\Delta(W)]$, with the \emph{invariant algebra} (the trivial isotypic component on which every $T_i$ acts by $q$) 
\[
\FF(q)[\Delta(W)]^{H_W(q)}:=\{ f\in \FF(q)[\Delta(W)]: T_i f =qf,\ 1\leq i\leq d\}
\]
equal to the polynomial algebra $\FF(q)[\Theta]$. We also show that our $H_W(q)$-action on $\FF(q)[\Delta(W)]$ is $\Theta$-linear and preserves the multigrading, hence reducing to $\FF(q)[\Delta_J(W)]/(\Theta_J)$ for all $J\subseteq S$. Furthermore, we obtain the following results.

\vskip5pt\noindent(i) 
\emph{There is an $H_W(q)$-module isomorphism $\FF(q)[\Delta(W)]/(\Theta)\cong H_W(q)$ if $q$ is generic, i.e. if $q$ is an indeterminate or $q\in \FF\setminus E$ for some finite set $E\subsetneq\FF$ ($E$ depends on $\FF$ and is not explicitly known).}

\vskip5pt\noindent(ii) 
\emph{Let $q=0$ and $J\subseteq S$. Then there is an $H_W(0)$-module decomposition 
\[
\FF[\Delta_J(W)]/(\Theta_J) = \bigoplus_{I\subseteq J} Q_I.
\]
Each summand $Q_I$ is the $\FF$-span of $\{wW_{I^c}: D(w)=I\}$ inside $\FF[\Delta_J(W)]/(\Theta_J)$, has homogeneous multigrading $\underline t^I$, and is isomorphic to the projective indecomposable $H_W(0)$-module $\P_I:=H_W(0)\pib_{w_0(I)}\pi_{w_0(I^c)}$. }

\vskip5pt\noindent(iii)
\emph{In particular, there is an $H_W(0)$-module isomorphism $\FF[\Delta(W)]/(\Theta) \cong H_W(0)$ for any field $\FF$.}

\vskip5pt
Since the Coxeter complex $\Delta(\SS_n)$ is the order complex of $\cB_n\setminus\{\emptyset,[n]\}$, we get an $H_n(q)$-action on $\FF(q)[\cB_n]$ by taking $W=\SS_n$. We recover most results in Section~\ref{sec:H0SR} by further specializing $q=0$.


\subsection{Gluing the group algebra and the $0$-Hecke algebra}
The group algebra $\FF W$ of a finite Coxeter group $W$ naturally admits both actions of $W$ and $H_W(0)$. Hivert and Thi\'ery~\cite{HivertThiery} defined the \emph{Hecke group algebra} of $W$ by gluing these two actions. In type $A$, one can also glue the usual actions of $\SS_n$ and $H_n(0)$ on the polynomial ring $\FF[\px]$, but the resulting algebra is different from the Hecke group algebra of $\SS_n$. 

Now one has a $W$-action and an $H_W(0)$-action on the Stanley-Reisner ring $\FF[\Delta(W)]$. What can we say about the algebra generated by the operators $s_i$ and $\overline\pi_i$ on $\FF[\Delta(W)]$? Is it the same as the Hecke group algebra of $W$? If not, what properties (dimension, bases, presentation, simple and projective indecomposable modules, etc.) does it have?

\subsection{Tits Building}
Let $\Delta(G)$ be the Tits building of the general linear group $G=GL(n,\FF_q)$ and its usual BN-pair over a finite field $\FF_q$; see e.g. Bj\"orner~\cite{Bjorner}. The Stanley-Reisner ring $\FF[\Delta(G)]$ is a $q$-analogue of $\FF[\cB_n]$. The nonzero monomials in $\FF[\Delta(G)]$ are indexed by multiflags of subspaces of $\FF_q^n$, and there are $q^{\inv(w)}$ many multiflags corresponding to a given multichain $\MM$ in $\cB_n$, where $w=\sigma(\MM)$. Can one obtain the multivariate quasisymmetric function identities in Theorem~\ref{thm:Ch} by defining a nice $H_n(0)$-action on $\FF[\Delta(G)]$?

\end{document}